\theoremstyle{plain}
\newtheorem{theorem}{Theorem}[section]
\newtheorem{lemma}[theorem]{Lemma}
\newtheorem{proposition}[theorem]{Proposition}
\newtheorem{corollary}[theorem]{Corollary}
\newtheorem{definition}[theorem]{Definition}
\theoremstyle{definition}
\DeclareMathOperator*{\argmax}{arg\,max}
\DeclareMathOperator*{\argmin}{arg\,min}
\newcommand{\E}{\mathbb{E}}
\newcommand{\prob}{\mathbb{P}}
\newcommand{\sss}[1]{{\scriptscriptstyle #1}}
\numberwithin{equation}{section}
\title[Weighted distances in PA models]{Weighted distances in scale-free preferential attachment models}
\author[J. Jorritsma]{Joost Jorritsma}
\author[J. Komj\'{a}thy]{J\'{u}lia Komj\'{a}thy}
\address{Department of Mathematics and Computer Science\\ Eindhoven University of Technology\\ P.O. Box 513\\ 5600 MB Eindhoven\\ The Netherlands}
\email{j.jorritsma@tue.nl, j.komjathy@tue.nl}
\keywords{Random networks, preferential attachment, typical distances, first passage percolation}
\subjclass[2010]{Primary:
05C80, 
90B15, 
60C05. 
}
\thanks{Acknowledgements. The work of JJ and JK is partly supported by the Netherlands Organisation for Scientific Research (NWO) through grant NWO 613.009.122.}
\date{July 3, 2019.}
\begin{document}
\begin{abstract}
  We study three preferential attachment models
  where the parameters are such that the asymptotic degree distribution has infinite variance.
  Every edge is equipped with a non-negative i.i.d.\ weight.
  We study the weighted distance between two vertices chosen uniformly at random, the typical weighted distance,
  and the number of edges on this path, the typical hopcount.
  We prove that there are precisely two universality classes of weight distributions, called the explosive and conservative class.
  In the explosive class, we show that the typical weighted distance converges in distribution to the sum of two i.i.d.\ \emph{finite} random variables.
  In the conservative class, we prove that the typical weighted distance tends to infinity, and we give an explicit expression for the main growth term, as well as for the hopcount. Under a mild assumption on the weight distribution the fluctuations around the main term are tight.
\end{abstract}
\maketitle
\section{Introduction}
\subsection{Motivation}
With the rise of social networks like Facebook and Instagram, information spreading in social networks is an actual topic.
What are the main reasons that can make a (fake) message go viral?
A way to model information diffusion mathematically is by representing the topological structure as an (un)directed graph,
where  vertices represent  people, and two vertices are connected by an edge if they are friends in the social network.
Every edge has a non-negative weight attached to it, standing for the time that it takes to transmit a message from one side of the edge to the other.
Other real-world networks with spreading phenomena can be modelled similarly, e.g.\ virus spreading on the internet, epidemics in society.
Many complex networks are intractably large and the underlying graphs are often unknown, let alone the weights.
A simplistic model of such a process is to model the network as an edge-weighted graph  where the edge weights are i.i.d.\ random variables.
This model is called \emph{first passage percolation} (FPP), and was introduced by Hammersley and Welsh for the lattice $\mathbb{Z}^d$ \cite{hammersley1965first}, see also \cite{auffinger201750} and the references therein.
Natural questions for this model are among others:
\begin{itemize}
  \item[(i)] For two vertices $u$ and $v$, what is the transmission time of a message from $u$ to $v$? How does the passage time depend on the size of the graph?
  \item[(ii)] How many edges are on the shortest weighted path from $u$ to $v$? In telecommunication networks, the signal loss increases in the number of edges on a path.
\end{itemize}
As argued above, in complex networks the underlying structure may be intractable.
In order to model these real networks, one can use random graphs that share some topological properties with the original network \cite{newman2003structure}.
We highlight two properties that many real-world networks are believed to share, which are satisfied by the models that we study in the present paper.

 The degree-sequences are scale-free, i.e.,
  the number of connections (degree) per vertex decays as a power law: there exists $\tau>1$
  such that the number of vertices with degree $k$ is proportional to $k^{-\tau}$.
  For example in the WWW, social networks, and protein networks, the power-law exponent $\tau$ is estimated to be in $(2, 3)$, see e.g.\ \cite{albert2002statistical, albert1999internet, newman2003structure}.

Although the networks may contain billions of nodes, the distances, i.e., the minimal number of edges to connect two vertices, are of logarithmic or even doubly logarithmic order of the size of the graph, see \cite{backstrom2012four, montoya2002small, newman2003structure,travers1967small}.
  In these cases, we call the network \emph{small world} or \emph{ultra-small world}, respectively.
  We stress that the small-world property describes distances in \emph{unweighted} graphs, and not in \emph{weighted} graphs as we study in this paper.

Many models have been introduced in the past decades that exhibit these properties,
such as the configuration model (CM) \cite{bender1978asymptotic, bollobas1980probabilistic}, generalized inhomogeneous random graphs \cite{chung2002average}, and the Norros-Reittu model \cite{norros2006conditionally}.
For an extensive discussion and results we refer the reader to \cite{hofstad2016book1} and the references therein.
Arguably one of the most well-known models is the preferential attachment model (PA),
as it gives a possible explanation to the emergence of the power law in the degree sequence \cite{barabasi1999emergence}.
The model became increasingly popular after the paper written by Barab\'asi and Albert \cite{barabasi1999emergence},
although similar models appeared in literature before, see e.g.\ \cite{simon1955class,yule1925ii}.

In the last decade, first passage percolation on random graphs has gained increasing attention, and the process is quite well understood on  \emph{static} graphs, i.e., graphs that do not grow over time.
It has been studied on the Erd\H{o}s-R\'enyi graph in \cite{fppErdos2011}, on
configuration models with finite variance degrees in \cite{bhamidi2010first, Bhamidi2017, bhamidi2014front, dereichOrtgiese2018local_neighbourhoods_cm}, and with infinite variance degrees in  \cite{adriaans2017weighted, baroni2017nonuniversality, Baroni2019, dereichOrtgiese2018local_neighbourhoods_cm}. FPP on spatial models as scale-free percolation,  geometric inhomogeneous random graphs, and hyperbolic random graphs is studied for infinite variance degrees in \cite{sfp2017explosion1, komjathy2018explosion}.
To the extent of our knowledge, no formal results are known for FPP on \emph{dynamically growing} models, such as PA.

In this paper we study three (non-spatial) PA models that are among the most commonly studied in literature.
We introduce them informally.
The construction of a graph is initialized with a graph $\mathrm{PA}_1$ and arrivals of vertices happen deterministically at times $t\in\{2, 3, ...\}$. The graph on $t$ vertices is denoted by $\mathrm{PA}_t$.
Arriving vertices favour connecting to vertices with high degree such that the asymptotic degree distribution follows a power law with exponent $\tau\in(2,3)$. We call $\tau$ the power-law exponent.
We call the number of connections that a new vertex establishes upon arrival the \emph{outdegree}. We study three variants of this model: one with fixed outdegree (FPA), and two with variable outdegree (VPA and GVPA), see Definitions \ref{def:pa_m_delta} and \ref{def:pa_gamma} below.
Once an edge is created, we equip the edge with an edge-weight, an i.i.d.\ copy of a non-negative random variable $L$.
Three different distance \emph{metrics} are of specific interest:
the \emph{typical weighted distance} $d_L^{(t)}(u,v)$, i.e., the total weight on the least weighted path between two vertices chosen uniformly at random, independently of each other in $\mathrm{PA}_t$, called \emph{typical vertices};
the \emph{typical hopcount} $d_H^{(t)}(u,v)$, i.e., the number of edges on the least weighted path between typical vertices;
  and the \emph{typical graph distance} $d_G^{(t)}(u,v)$, i.e., the number of edges on the shortest path between  typical vertices in the unweighted graph.

\subsection{Our contribution}\label{sec:contribution}
We prove that there are exactly two universality classes of weight distributions for the three edge-weighted models with power-law exponent $\tau\in(2,3)$. The universality classes are determined by a computable characteristic of the weight distribution $L$. We present the characteristic and an informal version of our main result here, precise results can be found in Theorems \ref{thm:weighted_distance_nonexplosive} and \ref{thm:weighted_distance_explosive} below.
For a random variable $L$, we define its cumulative distribution function as $F_L(x):=\prob(L\leq x)$,
and its generalized inverse by $F^{(-1)}(y):=\inf_x\{x\in\mathbb{R}: F(x)\geq y\}$.

\begin{definition}[Explosion characteristic {$I(L)$}]\label{def:explosion_char}
  Let $L$ be a non-negative random variable with distribution function $F_L$. We define the explosion characteristic $I(L)$ as
  \begin{equation}
    I(L) := \sum_{k=1}^\infty F_L^{(-1)}\left(\mathrm{e}^{-\mathrm{e}^k}\right).
    \label{eq:il_sum_of_inverse_distribution}
  \end{equation}
  We call $\{L: I(L)=\infty\}$ the conservative class, and $\{L:I(L)<\infty\}$ the explosive class.
\end{definition}
\noindent The term \emph{explosion} originates from the study of age-dependent branching processes, see e.g.\ \cite{amini2013explosions, athreya2004branching, harris2002theory}.
In these branching processes we say that explosion happens if infinitely many individuals are born within finite time.
The relation to explosion in trees comes from the fact that the neighbourhood of a typical vertex in $\mathrm{PA}_t$ converges in distribution to a random tree, the local weak limit. Local weak convergence is shown for the three models in \cite{berger2014asymptotic, dereich2013random, garavaglia2019preferential}.
It is interesting in its own right to study the edge-weighted version of the local weak limit tree. We prove that infinitely many vertices are within finite weighted distance from the root in the local weak limit if the weight distribution is in the explosive class. This fact is then used to show convergence in distribution for the typical weighted distance in $\mathrm{PA}_t$ if $I(L)<\infty$, the first part of our following main result.
\medskip
\begin{mdframed}
  \begin{theorem}[Meta theorem] \label{thm:meta-theorem}
    Consider $\mathrm{PA}_t$ with power-law exponent $\tau\in(2,3)$.
    If $I(L)<\infty$, then
    the typical weighted distance converges in distribution to an almost surely finite random variable. If $I(L)=\infty$, then
    \begin{equation*}
    d_L^{(t)}(u, v) \approx 2Q_t + o_{\prob}(Q_t),
  \end{equation*}
    where
    \begin{equation}
      K^\ast_t:=\left\lfloor\left. 2\log\log(t) \middle / |\log(\tau-2)| \right. \right\rfloor,
      \quad
      Q_t := \sum_{k\leq K^\ast_t} F_L^{(-1)}\left(\exp\big(-(\tau-2)^{-k/2}\big)\right). \label{eq:k_ast_qt}
    \end{equation}
    Under a mild extra condition on $L$ the error term $o_{\prob}(Q_t)$ is tight.
  \end{theorem}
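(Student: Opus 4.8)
The plan is to treat the explosive and conservative regimes separately, relying on a common structural input: in all three models the neighbourhood of a typical vertex converges locally weakly to an explicit random tree (the local weak limits established in \cite{berger2014asymptotic, dereich2013random, garavaglia2019preferential}), and along a distinguished ray of this tree the degrees blow up doubly exponentially, the vertex at tree-distance $k$ from the root having degree of order $\exp\!\big((\tau-2)^{-k/2}\big)$. Transferred to $\mathrm{PA}_t$, this says that after $K_t^\ast$ steps along such a ray one reaches the high-degree ``core'', and that the number of fresh edges incident to a first-passage exploration at stage $k$ is of order $\exp\!\big((\tau-2)^{-k/2}\big)$; hence the cheapest useful edge leaving stage $k$ costs about the minimum of that many i.i.d.\ weights, namely $F_L^{(-1)}\!\big(\exp(-(\tau-2)^{-k/2})\big)$. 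Summing these costs over $k\le K_t^\ast$ from each endpoint, and noting that the core can be crossed at weighted cost $o_{\prob}(Q_t)$, explains why $2Q_t$ is the growth term. A routine comparison of the series in \eqref{eq:il_sum_of_inverse_distribution} and \eqref{eq:k_ast_qt} shows that $Q_\infty:=\lim_{t\to\infty}Q_t$ is finite exactly when $I(L)<\infty$, so the dichotomy in the statement is precisely $Q_\infty<\infty$ versus $Q_\infty=\infty$.

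For the explosive class $I(L)<\infty$ I would first analyse first passage percolation directly on the limiting tree. Since the distinguished ray is genuinely infinite in the limit and $Q_\infty<\infty$, a Borel--Cantelli argument shows that the weighted distance from the root out along the ray is an almost surely finite random variable $Y$, so the weighted tree explodes. One then shows, using local weak convergence together with a truncation argument that controls the core, that independent explorations from $u$ and from $v$ each reach the exploding part of $\mathrm{PA}_t$ and can be glued through a common very-high-degree vertex at extra weighted cost $o_{\prob}(1)$; this gives $d_L^{(t)}(u,v)\xrightarrow{d}Y_u+Y_v$, with $Y_u,Y_v$ i.i.d.\ copies of $Y$ --- an almost surely finite limit.

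For the conservative class $I(L)=\infty$ the upper bound $d_L^{(t)}(u,v)\le(1+o_{\prob}(1))\,2Q_t$ comes from an explicit greedy construction: starting from $u$, at a vertex of current degree $d$ a first/second-moment estimate produces an incident edge of weight $(1+o(1))\,F_L^{(-1)}(\exp(-c\log d))$ leading to a vertex of much larger degree; iterating $K_t^\ast$ times from $u$ and from $v$, and then joining the two resulting high-degree endpoints through the core, accumulates weight $(1+o_{\prob}(1))\,2Q_t$. For the matching lower bound I would carry out a path count: bound the expected number of self-avoiding $u$--$v$ paths of total weight below $(1-\varepsilon)\,2Q_t$ using sharp estimates on the joint law of the degrees encountered along a path in $\mathrm{PA}_t$, and show that this expectation vanishes; together with the upper bound this yields $d_L^{(t)}(u,v)=2Q_t+o_{\prob}(Q_t)$. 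Tightness of the error under the extra hypothesis on $L$ then follows by sharpening both bounds so that each of the $K_t^\ast$ stages contributes an $O_{\prob}(1)$ fluctuation with summable tails.

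The main obstacle is the lower bound in the conservative regime. In contrast to the configuration model, preferential attachment builds up degrees through a dynamic, strongly dependent process, so the path count requires delicate two-sided control of the joint distribution of $\big(\deg_t(\pi_0),\dots,\deg_t(\pi_k)\big)$ along a candidate path $\pi$, uniformly over the three variants FPA, VPA and GVPA and uniformly in the birth times of the vertices on $\pi$. A secondary difficulty is making the constants match --- verifying that the greedy strategy is asymptotically optimal, i.e.\ that no path can systematically beat the per-stage cost $F_L^{(-1)}\!\big(\exp(-(\tau-2)^{-k/2})\big)$ --- which relies on the backbone-degree growth rate $(\tau-2)^{-1/2}$ being simultaneously achievable and unimprovable.
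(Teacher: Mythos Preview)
Your high-level dichotomy and the heuristic for why $2Q_t$ appears are correct, and your treatment of the explosive regime is close to the paper's (the paper actually proves explosion of the LWL by coupling \emph{back} to $\mathrm{PA}_t$ and invoking the finite-graph upper bound, rather than arguing directly on a distinguished ray, but your Borel--Cantelli sketch is a reasonable alternative). However, two aspects of the conservative case diverge from the paper in ways that matter.

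For the upper bound, your description ``at a vertex of current degree $d$ \ldots\ an incident edge \ldots\ leading to a vertex of much larger degree'' misses the structural point that makes PA different from the configuration model: high-degree vertices in $\mathrm{PA}_t$ are \emph{old} and are typically \emph{not} directly adjacent to one another. The paper's greedy path therefore proceeds in \emph{two-edge cherries}: from a vertex $\pi_{k-1}$ in layer $\mathcal{L}_{k-1}$ one passes through a young $\alpha$-connector $y\in[t]\setminus[\alpha t]$ and then into $\mathcal{L}_k$, so the path has $2K_t$ edges, not $K_t$. This sprinkling argument---freezing the graph at time $\alpha t$ and using the $(1-\alpha)t$ late arrivals as a fresh supply of independent connectors---is what makes the first/second-moment estimate go through and is the paper's main technical contribution; without it the greedy step you describe does not have the claimed independence structure.

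For the lower bound, the paper does \emph{not} carry out a weighted path count as you propose. Instead it decouples the graph from the weights: first, an \emph{unweighted} path-counting argument (Lemma~\ref{lemma:small_neighbourhoods}, building on \cite{dereich2012typical}) shows $|\partial\mathcal{B}_G^{(t)}(q,k)|\le\exp\!\big(2B(\gamma/(1-\gamma))^{k/2}\big)$; second, one lower-bounds $d_L^{(t)}(u,v)$ by $\sum_k \min\{L_e: e \text{ between shells }k,k{+}1\}$ and controls each minimum via Lemma~\ref{lemma:minimum_iid_rvs}. This sidesteps precisely the obstacle you flag---joint control of degrees along a weighted path---because the unweighted count only needs the edge-probability bound $p(m,n)$ of Proposition~\ref{prop:pa_g}, and the weights enter afterwards as i.i.d.\ minima over a deterministic number of edges. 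Your direct first-moment method on weighted paths would require integrating the weight-convolution against the path probabilities and is substantially harder to execute here.
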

\end{mdframed}
\medskip
\noindent
This meta theorem is formalized in Section \ref{sec:main-results} after the model definitions. There, we describe the limiting random variables if $I(L)<\infty$ and state the computable, mild condition on $L$ that yields tight error terms.
As a side result of the second part of Theorem \ref{thm:meta-theorem}, when $I(L)=\infty$, we show that if the weights are of the form $1+X$, $I(X)<\infty$, the typical weighted distance and typical hopcount are both tight around $4\log\log(t)/|\log(\tau-2)|$ for the models FPA and VPA.
This indicates that the addition of an \emph{excess} edge-weight beyond one does not affect the topology of the shortest paths drastically.
Constant weights are a special case of these. So, our result extends results from \cite{caravenna2016diameter, dereich2012typical,dommers2010diameters, monch2013distances}, by showing that the fluctuations of the typical graph distance around $4\log\log(t)/|\log(\tau-2)|$ are tight.

\subsection*{Organisation}
The next section formally introduces the models and the necessary concepts to describe the limiting random variables for the explosive case.
Afterwards, we state our main results, and discuss them by formulating some open problems and recalling relevant results from literature.
In Section \ref{sec:upperbound}, we prove  upper bounds for the weighted distance in the finite graphs by constructing a path, and show that the local weak limit tree is explosive if and only if the edge-weight distribution $L$ is a member of the explosive universality class.
Then, in Section \ref{sec:lowerbound}, we prove the corresponding lower bounds for both the conservative as the explosive regime.
In Section \ref{sec:hopcount} we prove a theorem on the hopcount.
Lastly, in Section \ref{sec:cons-lwl} we extend the results for conservative distributions on finite graphs to the local weak limit.
\subsubsection*{Notation}
For $\min\{m, n\}$ and $\max\{m, n\}$ we write respectively $m\wedge n$ and $m\vee n$.
Furthermore, $\lceil x\rceil:=\min\{y\in\mathbb{Z}, y\geq x\}$ and $\lfloor x \rfloor:=\max\{y\in\mathbb{Z}, y\leq x\}$. For $n\in\mathbb{N}$, the set $\{1, 2, ..., n\}$
is denoted by $[n]$.
If in a graph there is an edge incident to both $u$ and $v$, we write $u\leftrightarrow v$,
whereas for a set of vertices $S$, we write $u\leftrightarrow S$ if there is a vertex
$v$ in $S$ such that $u\leftrightarrow v$.
A sequence of events $(\mathcal{E}_n)_{n\geq1}$ holds with high probability (whp) if $\lim_{n\rightarrow\infty}\prob(\mathcal{E}_n)=1$. We abbreviate with probability by w/p.
If the weight random variable $L$ is indexed, then we assume that the random variables with different indices are i.i.d.
\section{Models and main results}
\subsection{Models}\label{sec:models}
We introduce three models, one where the number of outgoing edges is fixed, whereas in the second model the number of outgoing edges is
a random variable and the third model allows for more general connection probabilities than the second model.
The first model we call fixed preferential attachment (FPA).
This model appeared formally for less general cases in \cite{berger2005spread, bollobas2004diameterpa}.
For a complete introduction, we refer to \cite[Chapter 8]{hofstad2016book1}.
At time $t$ we sequentially add $m$ outgoing edges to the arriving vertex labelled $t$.
After the $j$-th edge has been formed, the defining connection probabilities are updated.
Denote by $D^{\leftarrow}_{(t, j)}(v)$ the indegree, the number of incoming edges, of a vertex $v$
  right after the $(tm+j)$-th edge is added to the graph.
Similarly, we write $\mathrm{FPA}_{(t, j)}$ for the constructed graph
  right after the moment it contains exactly $(tm+j)$ edges.
  Let $\{t\overset{j}{\rightarrow}v\}$ be
    the event that the $j$-th edge of vertex $t\in\mathbb{N}$ is attached to $v\in[t-1]$.
\begin{definition}[$\mathrm{FPA}(m,\delta)$]\label{def:pa_m_delta}
  Fix $m\in\mathbb{N}, \delta\in(-m,\infty)$.
  Let $\mathrm{FPA}_{1}(m, \delta)$ be the graph with a single vertex without any edges.
  The model $\mathrm{FPA}(m,\delta)$ is defined by the following sequence of   conditional connection probabilities
  \begin{equation}\label{eq:pa_mdelta_connection}
    \prob\big(\{t\overset{j}{\rightarrow}v\}\mid \mathrm{FPA}_{(t, j)}\big) = \frac{D^{\leftarrow}_{(t, j-1)}(v) + m\left(1 + \delta/m\right)}{Z_{t,j}}, \qquad v\in[t-1],
  \end{equation}
  where $Z_{t,j}$ is a normalizing constant.
  The power-law exponent of the model is
  \begin{equation}\label{eq:tau_m_delta}
    \tau_{m,\delta} := 3+ \delta/m.
  \end{equation}
\end{definition}
\noindent
One can verify that $Z_{t,j}=(t-2)(\delta+2m)+ j-1 + m+\delta$.
There are many variants of FPA \cite{hofstad2016book1}.
Our definition does not allow for self-loops, but allows for multi-edges.
Some variants behave qualitatively similarly and our results extend to the models in \cite{hofstad2016book1}.
The numerator in the connection probabilities in \eqref{eq:pa_mdelta_connection} is equivalent to \cite[Formula (8.2.1)]{hofstad2016book1}, where the numerator is a function of the \emph{total degree} rather than the \emph{indegree}. The two formulas coincide, because the outdegree is equal to $m$.
The asymptotic degree distribution in FPA decays as a power law with exponent $\tau_{m,\delta}$ in \eqref{eq:tau_m_delta}, see \cite{hofstad2016book1}, so that for FPA $\tau\in(2,3)$ when $\delta\in(-m,0)$ as in various real-world networks \cite{newman2003structure}.
The constraint $\delta>-m$ ensures well-defined probabilities in \eqref{eq:pa_mdelta_connection}.

In FPA the total number of edges in the graph is deterministic, making some explicit calculations easier.
On the contrary, the events $\{t \rightarrow v_1\}$ and $\{t\rightarrow v_2\}$ are negatively correlated for vertices $v_1\neq v_2$,
 yielding more involved computations. The next models that we introduce behave to some extent as the opposite of FPA,
as the edges are conditionally independent, leading to a random outdegree of vertices. They were introduced by Dereich and M\"orters \cite{dereich2009random},
where they call the graph \emph{preferential attachment with conditionally independent edges}, although in \cite{hofstad2016book1} it is called \emph{Bernoulli preferential attachment}.
Let $D_t^{\leftarrow}(v)$ be the indegree of the vertex $v$ right before time $t$.
\noindent
\begin{definition}[{$\mathrm{VPA}(f)$, $\mathrm{GVPA}(f)$}]\label{def:pa_gamma}
  Let $f:\mathbb{N}\rightarrow (0,\infty)$ be a concave function satisfying $f(0)\leq 1$ and $f(1) - f(0)<1$.
  We call $f$ the \emph{attachment rule}.
  Let $\mathrm{GVPA}_1(f)$ be the graph with a single vertex without any edges.
  Conditionally on $\mathrm{GVPA}_{t-1}(f)$, vertex $t$ connects to $v\in[t-1]$ w/p
  \begin{equation*}
  f\left(D^{\leftarrow}_{t}(v)\right)/t,
\end{equation*}
  independently of the other existing vertices.
  Important parameters of the model are
  \begin{equation}
    \gamma_f := \lim_{t\rightarrow\infty}f(t)/t, \qquad \tau_{f}:= 1 + 1/\gamma,\label{eq:gamma_f}
  \end{equation}
  which are well-defined by the concavity of $f$. We call $\tau_f$ the power-law exponent.
  For general $f$, we call the model generalized variable preferential attachment (GVPA). For affine $f$, we call the model variable preferential attachment (VPA).
\end{definition}
\noindent For results on these models, such as the size of the giant component, and the asymptotic degree distribution, we refer the reader to \cite{dereich2012typical, dereich2009random, dereich2011random, dereich2013random}.
In the model VPA, calculations become explicit and precise results can be derived.
The asymptotic degree distribution decays as a power law with
exponent $\tau_\gamma$ in \eqref{eq:gamma_f}, see \cite{dereich2009random}.
We assume that $\gamma_f\in(1/2,1)$, so that $\tau_f\in(2,3)$.

The three models, FPA, VPA, and GVPA, behave qualitatively similar in terms of their degree distribution and \emph{typical graph distance} when $\tau\in(2,3)$ \cite{dereich2012typical}.
This motivates to refer to the models by their power-law exponent $\tau$, and to call them PA collectively. We distinguish them only when different proofs are required, or when referred to different results from literature.

In the present paper, we look at \emph{typical least weighted paths}, that is, we assume that every edge in $\mathrm{PA}_t$ is equipped with an i.i.d.\ weight, and we are interested
in the sum of the weights on the least weighted path between two vertices, and the number of edges on this path, called \emph{hopcount}.
\begin{definition}[Distances in graphs]\label{def:graph_distances_1}
    Consider the graph $\mathcal{G}=\left(V,E\right)$
    and assume every edge $e\in E$ is equipped with a weight $L_e$.
    For a path $\pi$, we define its length as $\|\pi\|:=\sum_{e\in\pi}1$
    and $L$-length as $\|\pi\|_L:=\sum_{e\in\pi}L_e$.
    For $u, v\in V$, let $\Omega_{u,v}:=\{\pi: \pi \text{ is a path from $u$ to $v$ in $\mathcal{G}$}\}$.
    We define the distance, $L$-distance (also called weighted distance), and $H$-distance (hopcount)
    between $u$ and $v$ in the graph $\mathcal{G}$ as
    \begin{align*}
      d_G(u,v) := \min_{\pi\in\Omega_{u,v}} \|\pi\|,
      \qquad
      d_L(u,v) := \min_{\pi\in\Omega_{u,v}} \|\pi\|_L,
      \qquad
      d_H(u, v):= \Big\|\argmin_{\pi\in\Omega_{u,v}} \|\pi\|_L\Big\|,
    \end{align*}
    respectively.
    If $\Omega_{u,v}=\varnothing$, the above distance-metrics are defined as $\infty$. If there are several paths $\pi_1,\dots,\pi_k$ minimizing the $L$-distance, the hopcount is defined as $\min_{i\leq k}\|\pi_i\|$.
    For a letter $\square\in\{G,L,H\}$, the typical $\square$-distance of a graph $\mathcal{G}$ is defined as the $\square$-distance
    between two typical vertices.
    For $q\in V$ and a set $A\subseteq V$, we generalize distances and define the $\square$-diameter of $A$ by
    \[
      d_{\square}(q,A) = \min_{w\in A}d_{\square}(q, w), \qquad
      \mathrm{diam}_{\square}(A)=\max_{x,y\in A}d_{\square}(x,y).
    \]
    For a vertex $q$, its $\square$-neighbourhood with radius $r>0$ and its boundary are defined as
    \begin{equation*}
      \mathcal{B}_{\square}(q,r) := \{w: d_{\square}(q, w)\leq r\}, \qquad
      \partial\mathcal{B}_G(q,r) := \{w: d_G(q, w)= \lfloor r\rfloor\}.
    \end{equation*}
    We write $\widetilde{\mathcal{B}}_{\square}(q, r)$ for the induced subgraph of $\mathcal{G}$
    on the vertex set $\mathcal{B}_{\square}(q,r)$, with edges $(u,v)$ from $\mathcal{G}$ if both $u$ and $v$ are in $\mathcal{B}_{\square}(q,r)$. When the considered graph $\mathcal{G}$ is not directly clear from the context, we add a superscript $(\mathcal{G})$ to the various notions.
    If $\mathcal{G}=\mathrm{PA}_t$, we abbreviate $(\mathrm{PA}_t)$ by $(t)$ in the superscript.
\end{definition}
An alternative way to look at an edge-weighted graph is to view the weights as \emph{passage times}, i.e., the time that it takes to send a message from one side of the edge to the other. The notions of time and weight are used interchangeably. We stress that the passage time of a single edge is not related to the time $t$ in the construction of the graph $\mathrm{PA}_t$.

We introduce the concepts of explosion time and local weak limit
to describe the limiting random variables for the typical weighted distance in the explosive class in Theorem \ref{thm:weighted_distance_explosive} below.
\begin{definition}[Explosive graph]\label{def:explosive_graph}
    Let $\mathcal{G}=(V,E)$ be a weighted graph  that is locally finite. For the time to reach graph distance $k$ and the time to its $n$-th closest vertex in $L$-distance from a vertex $q$, we write
    \[
    \beta^{(\mathcal{G})}_k(q)\,:=\,d_L\left(q, \partial \mathcal{B}_G(q, k)\right), \qquad
    \sigma^{(\mathcal{G})}_n(q) := \inf\left\{r: |\mathcal{B}_L(q, r)|\geq n\right\}.
    \label{eq:sigma_n}
    \]
  If $|V|=\infty$, we define the explosion time of $q$  as
  $
  \beta_\infty^{(\mathcal{G})}(q) := \lim_{k\rightarrow\infty}\beta^{(\mathcal{G})}_k(q).
  $
  If there is a $q\in V$ with finite explosion time, then we call $\mathcal{G}$ explosive.
  For $\mathcal{G}\equiv \mathrm{PA}_t$, we write $\beta_k^{(t)}$ and $\sigma_n^{(t)}$
  if the $q$ in $\mathrm{PA}_t$ is a typical vertex.
  If $\mathcal{G}$ is a tree rooted in $\circledcirc$, we abbreviate $\beta_k^{(\mathcal{G})}:=\beta_k^{(\mathcal{G})}(\circledcirc)$.
\end{definition}
\noindent The local weak limit of graphs can be used to describe the neighbourhood of a typical vertex. For an introduction we refer to \cite[Chapter 2]{hofstad2017stochasticprocesses} and its references.
Let $\mathbb{G}_\star$ be the space of all (possibly infinite) rooted graphs.
\begin{definition}[Local weak limit in probability]
  Let $(\mathcal{G}_t)_{t\geq 0}$ be a sequence of finite random rooted graphs, and
  let $(\mathcal{G},q)$ be a rooted random graph following law $\mu$. The sequence $(\mathcal{G}_t)_{t\geq 0}$ converges in probability in the local weak convergence sense to $(\mathcal{G},q)$, when
  \[
  \mathbb{E}_t[h(\mathcal{G}_t, q_t)]\overset{\prob}{\longrightarrow}\mathbb{E}[h(\mathcal{G}, q)],
  \]
  for every bounded and continuous function $h:\mathbb{G}_\star\rightarrow\mathbb{R}$, where the expectation on the rhs is w.r.t. $(\mathcal{G},q)$ having law $\mu$, while the expectation on the lhs is w.r.t. the typical vertex $q_t$ only.
\end{definition}
\noindent Berger \emph{et al.} \cite{berger2014asymptotic} identify the local weak limit of FPA.
They give an explicit construction of the limit that they call the \emph{P\'olya-point graph} ($\mathrm{PPG}$),
an infinite rooted tree derived from a multi-type branching process.
While the construction of FPA in \cite{berger2014asymptotic} is slightly different from Definition \ref{def:pa_m_delta}, it can be related to our model for $\delta\geq0$.
 In \cite[Chapter 4]{garavaglia2019preferential}, it is shown that the result remains valid for a wider class of models, in particular when $\delta<0$.

Turning to the local weak limit of GVPA, Dereich and M\"orters \cite{dereich2013random} introduce a similar concept for the  GVPA-model, the \emph{idealized neighbourhood tree} (INT). While local weak convergence is only stated  briefly before \cite[Theorem 1.8]{dereich2013random}, they construct a coupling similar to the PPG.
For our proofs it is not important how the local weak limits can be constructed, only that they exist.
In fact, we consider the $\mathrm{PPG}$ and $\mathrm{INT}$ as a \emph{black box} and yet obtain results.
If a statement holds for both models, we refer to the INT or PPG as LWL (local weak limit).
We write $\mathrm{LWL}_k$ for the tree restricted to vertices that have graph distance at most $k$ from the root.
We call the vertices that are at graph distance exactly $k$ away from the root the $k$-th generation of the LWL.
We state the combined result on local weak convergence for the reader's convenience.
We call two rooted graphs $(\mathcal{G}, x)$, $(\mathcal{G}', x')$ rooted isomorphic, and write $(\mathcal{G},x)\simeq(\mathcal{G}',x')$, if there exists an isomorphism
from $\mathcal{G}$ to $\mathcal{G}'$ that maps $x$ to $x'$.
Recall the graph-neighbourhood $\widetilde{\mathcal{B}}_G$ from Definition \ref{def:graph_distances_1}.
\begin{proposition}[Local weak convergence
  {\cite[Theorem 2.2, Proposition 3.6]{berger2014asymptotic}},
  {\cite[Section 5,6]{dereich2013random}}]\label{prop:local_weak_limit}
  The local weak limits of PA are the P\'olya-point graph for $\mathrm{FPA}(m,\delta)$, and the idealized neighbourhood tree
  for $\mathrm{GVPA}(f)$.
  Moreover, let $q$ be a typical vertex, then for all $\delta_{\ref{prop:local_weak_limit}}>0$ there exists a function $\kappa_{\delta_{\ref{prop:local_weak_limit}}}(t)$ that tends to infinity with $t$, such that
  $\mathcal{B}^{(t)}_G(q, \kappa_{\delta_{\ref{prop:local_weak_limit}}}(t))$ and $\mathrm{LWL}_{\kappa_{\delta_{\ref{prop:local_weak_limit}}(t)}}$ can be coupled, such that,
  denoting by $\circledcirc$ the root of the $\mathrm{LWL}$,
  \begin{equation}
  \prob\left(\widetilde{\mathcal{B}}^{(t)}_G\left(q, \kappa_{\delta_{\ref{prop:local_weak_limit}}}(t)\right)\simeq \mathrm{LWL}_{\kappa_{\delta_{\ref{prop:local_weak_limit}}(t)}}(\circledcirc)\right) \geq 1-\delta_{\ref{prop:local_weak_limit}},
  \label{eq:local_weak_limit}
  \end{equation}
\end{proposition}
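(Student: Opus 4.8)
The plan is to reduce the statement to the local weak convergence results already established in the literature, and then to upgrade the resulting fixed-radius coupling to one on a slowly growing radius by a diagonal argument. First I would recall the standard reformulation of local weak convergence in probability in terms of neighbourhoods: using as test functions $h(\mathcal{G},x)=\mathbbm{1}\{\widetilde{\mathcal{B}}_G(x,r)\simeq H\}$ for a fixed radius $r\in\mathbb{N}$ and a fixed finite rooted graph $H$ (these are bounded and continuous in the local topology, and there are only countably many relevant $H$ at each radius), convergence in probability of $\mathbb{E}_t[h(\mathcal{G}_t,q_t)]$ to $\mathbb{E}[h(\mathcal{G},q)]$ for all such $h$ is equivalent to
\[
\prob\!\left(\widetilde{\mathcal{B}}^{(t)}_G(q_t,r)\simeq\mathrm{LWL}_r(\circledcirc)\right)\xrightarrow[t\to\infty]{}1
\qquad\text{for every fixed }r\in\mathbb{N},
\]
where $q_t$ denotes a typical vertex of $\mathrm{PA}_t$. (In particular this already encodes that the radius-$r$ neighbourhood of a typical vertex is a tree with high probability, consistent with the limits being trees.)

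Second, I would quote the concrete identifications of the limit. For $\mathrm{FPA}(m,\delta)$ with $\delta\geq0$ this is \cite[Theorem 2.2, Proposition 3.6]{berger2014asymptotic}, whose proof proceeds through an explorative coupling of $\widetilde{\mathcal{B}}^{(t)}_G(q_t,r)$ with the P\'olya-point graph that succeeds with probability tending to $1$; the verification that their construction matches Definition \ref{def:pa_m_delta} and extends to $\delta<0$ is carried out in \cite[Chapter 4]{garavaglia2019preferential}. For $\mathrm{GVPA}(f)$ (hence also for the affine subcase $\mathrm{VPA}(f)$) the analogous explorative coupling of $\widetilde{\mathcal{B}}^{(t)}_G(q_t,r)$ with the idealized neighbourhood tree is contained in \cite[Sections 5,6]{dereich2013random}. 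Either way, we obtain the displayed fixed-radius statement.

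Third, to pass to a growing radius I would diagonalise. Fix $\delta_{\ref{prop:local_weak_limit}}>0$. For each $r\in\mathbb{N}$ the previous step provides a threshold, which after replacing it by a running maximum we may take to be a strictly increasing sequence $t_r$ with $t_0:=0$, such that for all $t\geq t_r$ the radius-$r$ coupling succeeds with probability at least $1-\delta_{\ref{prop:local_weak_limit}}$. Set
\[
\kappa_{\delta_{\ref{prop:local_weak_limit}}}(t):=\max\{r\geq0:\,t_r\leq t\}.
\]
Strict monotonicity of $(t_r)$ forces $t_r\geq r$, so this maximum is finite for every $t$; it is nondecreasing in $t$ and tends to infinity; and since by construction $t_{\kappa_{\delta_{\ref{prop:local_weak_limit}}}(t)}\leq t$, applying the fixed-radius bound at $r=\kappa_{\delta_{\ref{prop:local_weak_limit}}}(t)$ yields \eqref{eq:local_weak_limit} with this choice of $\kappa$.

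The part that needs genuine care is the second step: pinning down the dictionary between the conventions of \cite{berger2014asymptotic} (edge orientations, the choice of initial graph, and their $\delta\geq0$ hypothesis) and Definition \ref{def:pa_m_delta}, and likewise checking that the arguments of \cite{dereich2013random} really deliver a neighbourhood coupling for every fixed radius rather than merely convergence of a single functional. Everything else — the reformulation in the first step and the diagonalisation in the third — is routine, which is why we are content to treat the $\mathrm{PPG}$ and $\mathrm{INT}$ as a black box in the sequel.
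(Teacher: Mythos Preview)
The paper does not give its own proof of this proposition: it is stated ``for the reader's convenience'' with citations to \cite{berger2014asymptotic}, \cite{dereich2013random}, and \cite{garavaglia2019preferential}, and is then used as a black box. Your proposal is consistent with this, correctly identifying the cited sources and supplying the routine diagonal argument needed to pass from the fixed-radius couplings in those references to a coupling on a slowly growing radius $\kappa_{\delta}(t)$; the paper leaves that passage implicit.
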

\subsection{Main results}\label{sec:main-results}
Recall the explosion characteristic $I(L)$ from \eqref{eq:il_sum_of_inverse_distribution}.
We start with the results on the typical weighted distance in $\mathrm{PA}_t$, where the edge-weight distribution satisfies $I(L)=\infty$.  In this case the typical weighted distance tends to infinity as the graph size tends to infinity.  We determine the first order of growth and the number of edges used on this path. For FPA and VPA, we strengthen our results by showing that the fluctuations around the first order term are tight under a mild condition on $L$.
Recall $K^\ast_t$ and $Q_t$ from \eqref{eq:k_ast_qt}.
\begin{theorem}[Weighted distance, conservative case]\label{thm:weighted_distance_nonexplosive}
  Consider PA with power-law exponent $\tau\in(2, 3)$, i.i.d.\ weights on the edges with distribution $F_L$
  satisfying $I(L)=\infty$.
  Let $u, v$ be two typical vertices.
  Then, for the typical weighted  distance in $\mathrm{PA}_t$,
  \begin{equation}
    \left. d_L^{(t)}(u, v) \middle / 2Q_t \right.
    \overset{\prob}{\longrightarrow} 1, \qquad \text{ as }t\rightarrow\infty.\label{eq:weighted-distance-tight}
  \end{equation}
  Moreover, for the models FPA and VPA from Definition \ref{def:pa_m_delta} and \ref{def:pa_gamma},
  if $I(L)=\infty$ and $F_L$ satisfies
  \begin{equation}\label{eq:tightness_cond}
    \sum_{k=1}^\infty \frac{1}{k}\left(F_L^{(-1)}\left(\mathrm{e}^{-\mathrm{e}^k}\right) - \sup\{x: F_L(x)=0\}\right) < \infty,
  \end{equation}
  then
  \begin{equation*}
    \left( d_L^{(t)}(u,v) - 2Q_t\right)_{t\geq 1}
  \end{equation*}
  forms a tight sequence of random variables, i.e., the fluctuations are of order $O(1)$ whp.
\end{theorem}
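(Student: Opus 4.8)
The plan is to obtain the two statements from a matching pair of bounds: $d_L^{(t)}(u,v)\le 2Q_t+(\mathrm{error})$ via an explicit path, and $d_L^{(t)}(u,v)\ge 2Q_t-(\mathrm{error})$ via a second-moment bound on light balls, where the errors are $o_{\prob}(Q_t)$ in general and, for FPA and VPA under \eqref{eq:tightness_cond}, of order $O_{\prob}(1)$. As a preliminary, write $(\tau-2)^{-k/2}=\exp(ck)$ with $c:=\tfrac12|\log(\tau-2)|>0$, so the $k$-th term of $Q_t$ equals $F_L^{(-1)}(\mathrm{e}^{-\mathrm{e}^{ck}})$; by monotonicity of $F_L^{(-1)}$ this series is comparable to $I(L)$, hence $I(L)=\infty$ forces $Q_t\to\infty$, so that \eqref{eq:weighted-distance-tight} is a genuine weak law rather than a normalisation artefact.

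\emph{Upper bound (Section~\ref{sec:upperbound}).} Starting from $u$ one builds a greedy ladder $u=w_0\leftrightarrow w_1\leftrightarrow\dots\leftrightarrow w_{K^\ast_t}$ along which the indegree in $\mathrm{PA}_t$ grows doubly exponentially, of order $\exp((\tau-2)^{-k/2})$. The passage $w_k\to w_{k+1}$ is made by taking the \emph{lightest} among those incident edges of $w_k$ that lead to a vertex whose indegree is boosted to roughly the $(\tau-2)^{-1/2}$-th power of that of $w_k$ — a degree-boosting estimate special to $\tau\in(2,3)$ preferential attachment, the analogue of the size-biasing step for the configuration model; since there are $\exp(\Theta((\tau-2)^{-k/2}))$ admissible edges, its weight is of order $F_L^{(-1)}(\exp(-(\tau-2)^{-k/2}))$. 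After $K^\ast_t$ rungs the indegree has reached the scale $t^{1/(\tau-1)}$ of the maximal degree, so $w_{K^\ast_t}$ lies in the core; running the same construction from $v$ and joining the two core endpoints at $L$-cost $O_{\prob}(1)$ through the polynomially-high-degree vertices gives $d_L^{(t)}(u,v)\le 2Q_t+o_{\prob}(Q_t)$. The substance here is the whp existence of the degree-boosting neighbour at each rung, shown by first- and second-moment estimates on indegrees — explicit through the P\'olya-urn/martingale representations for FPA and VPA and through the attachment rule for GVPA — together with the identification of the weight of the lightest admissible edge and the $O_{\prob}(1)$ bound for the core traversal.

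\emph{Lower bound (Section~\ref{sec:lowerbound}).} Fix $\varepsilon>0$ and set $q:=(1-\varepsilon)Q_t$. It suffices to show that for a typical vertex $w$, with probability $1-t^{-\omega(1)}$, the weighted ball $\mathcal{B}_L(w,q)$ has only $t^{o(1)}$ vertices: since $u,v$ are independent and uniform, $\mathbb{E}\big[|\mathcal{B}_L(u,q)\cap\mathcal{B}_L(v,q)|\,\big|\,\mathrm{PA}_t\big]=\sum_w\big(|\mathcal{B}_L(w,q)|/t\big)^2=t^{-1+o(1)}$, so whp the two balls are disjoint, which forces $d_L^{(t)}(u,v)>2q=2(1-\varepsilon)Q_t$. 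The size bound uses, first, a weighted-exploration estimate — carried out via the local-weak-limit coupling of Proposition~\ref{prop:local_weak_limit} and the doubly exponential growth of the generation sizes, in the spirit of the configuration-model analysis — that reaching graph distance $k$ from $w$ costs $L$-weight at least $(1-o(1))\sum_{j\le k}F_L^{(-1)}(\exp(-(\tau-2)^{-j/2}))$, whence $\mathcal{B}_L(w,q)\subseteq\mathcal{B}_G(w,k(q))$ with $k(q)\le(1-\varepsilon/2)K^\ast_t$; and, second, the indegree bound that every vertex at graph distance $k$ from $w$ has indegree at most $\exp(C(\tau-2)^{-k/2})$ with high probability, which yields $|\mathcal{B}_G(w,k(q))|\le\exp\big(Ce^{ck(q)}\big)=\exp(o(\log t))=t^{o(1)}$. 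Together with the upper bound this proves \eqref{eq:weighted-distance-tight}.

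\emph{Tightness for FPA and VPA.} Here each $o_{\prob}(Q_t)$ must be replaced by $O_{\prob}(1)$, which is the delicate point and where the extra structure of these two models is essential. Along the greedy ladder $\log\log$ of the indegree of $w_k$ grows like $ck$ up to bounded, martingale-type corrections, and the number of light degree-boosting edges available at rung $k$ is $\exp((\tau-2)^{-k/2})$ only up to a $\mathrm{poly}(k)$ factor; pushing these per-rung discrepancies through $F_L^{(-1)}$ and summing over the $\asymp K^\ast_t$ rungs produces, after an Abel summation, an excess over $2Q_t$ comparable to $\sum_k\tfrac1k\big(F_L^{(-1)}(\mathrm{e}^{-\mathrm{e}^{k}})-\sup\{x:F_L(x)=0\}\big)$ — finite precisely under \eqref{eq:tightness_cond} and otherwise divergent, which is why tightness can hold only under that hypothesis. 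Subtracting the left endpoint $\sup\{x:F_L(x)=0\}$ is what absorbs the unavoidable $O(1)$ cost of the first and last $O(1)$ rungs and of the core traversal, each of whose edges weighs at least that much regardless of the path. A mirrored argument on the lower-bound side supplies the matching lower deviation, so $d_L^{(t)}(u,v)-2Q_t$ is tight. I expect the genuine obstacle to be this last part: establishing concentration of the indegree sequence along the \emph{random, self-selected} optimal ladder to within the precision required by \eqref{eq:tightness_cond}, uniformly enough to be summed over all $\asymp\log\log t$ rungs; it is here that the explicit P\'olya-urn structure of FPA and the affine attachment rule of VPA are indispensable, whereas the general attachment rule of GVPA does not provide such quantitative control — which is why the tightness claim is restricted to FPA and VPA.
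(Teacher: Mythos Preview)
Your overall decomposition into matching upper and lower bounds is correct, and your identification of $Q_t$ as the target and of condition~\eqref{eq:tightness_cond} as the obstruction to tightness is accurate. However, two of your key mechanisms differ from the paper's in ways that matter.

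\textbf{Upper bound.} Your greedy ladder advances by a \emph{single} edge per step: from $w_k$ take the lightest incident edge leading to a higher-degree vertex. In preferential attachment this does not work. A vertex of high degree $s$ is typically old; its many neighbours are the younger vertices that attached to it, and these have \emph{lower} degree, while the only edges going to older (higher-degree) vertices are the $O(m)$ outgoing edges chosen at its own arrival --- far too few to take a minimum over. The paper's construction (Proposition~\ref{prop:weighted_distance_in_t} and Definition~\ref{def:layers_connectors_greedy}) instead uses a \emph{sprinkling} argument: look at the graph at time $\alpha t$, define layers $\mathcal{L}_k$ by degree thresholds $s_k$ with $s_k = s_{k-1}^{(1-\varepsilon_{k-1})/(\tau-2)}$, and connect $\pi_{k-1}\in\mathcal{L}_{k-1}$ to $\pi_k\in\mathcal{L}_k$ via a two-edge ``cherry'' through an $\alpha$-connector, a vertex in $[t]\setminus[\alpha t]$. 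These late-arriving connectors attach conditionally independently, which is what produces the binomially many two-paths needed to take a minimum (Lemma~\ref{lemma:error_prob_tk}). The exponent of degree boost is $1/(\tau-2)$ per \emph{two} edges, giving the same $K^\ast_t$ edges in total, but the mechanism is essential and is exactly the source of the factor two relative to the configuration model discussed after Theorem~\ref{thm:weighted_distance_explosive}. Your description omits this and, read literally, would fail. The tightness then comes not from Abel summation but from choosing $\varepsilon_k=1/(k+2)^2$ so that $\prod_k(1-\varepsilon_k)>0$, followed by a variable transformation~\eqref{eq:upperbound_var_trans_tight2} on the resulting integral; condition~\eqref{eq:tightness_cond} is precisely what makes the Jacobian correction $\int (C/y)\,F_{L'}^{(-1)}(\cdot)\,\mathrm{d}y$ finite.

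\textbf{Lower bound.} Your second-moment argument requires $\frac{1}{t^2}\sum_w|\mathcal{B}_L(w,q)|^2\to 0$, hence that balls of size $t^{o(1)}$ hold for all but $o(t^{-1})$ fraction of vertices; your claimed failure probability $t^{-\omega(1)}$ is not available from the path-counting estimates, which (see Lemma~\ref{lemma:small_neighbourhoods}) only give probability $1-\delta$ on a good event $\mathcal{E}^{(t)}(q)$ for each fixed $\delta>0$. On the complementary $\delta$-fraction of vertices the ball may be as large as $t$, which swamps your second moment. The paper avoids this by citing the existing graph-distance lower bound (Proposition~\ref{prop:upperbound_dg}, from \cite{dereich2012typical}) to obtain disjointness of the $G$-neighbourhoods of $u$ and $v$ directly, then bounding $d_L(q,\partial\mathcal{B}_G(q,K'_t))$ from below by summing minima of i.i.d.\ weights over shell-to-shell edge sets, whose sizes are controlled by the same path-counting lemma but now used only at the level of a single typical vertex, for which probability $1-\delta$ suffices.
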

\noindent We believe that \eqref{eq:tightness_cond} is only a technical condition.
Only distributions $L$ that are extremely \emph{flat} around the origin (triple exponentially) violate it.
An artificial example of such a distribution is if $F_L$ in the neighbourhood
of 0 satisfies
\[
F_L(x) = \exp\{-\exp\{\mathrm{e}^{x^{-\beta}}\}\}
\]
for some $\beta\geq1$.
If $F_L$ satisfies this equality for some $\beta\in(0, 1)$, then condition \eqref{eq:tightness_cond} is satisfied.

We proceed to the typical hopcount for a class of conservative weight distributions.
\begin{theorem}[Hopcount, weights bounded away from zero]\label{thm:hopcount_nonexplosive}
  Consider PA with power-law exponent $\tau\in(2, 3)$, i.i.d.\ weights on the edges with distribution $F_L$ whose support is bounded away from zero, i.e., ${a:=\sup\{x: F_L(x)=0\}>0}$\footnote{This implies that $I(L)=\infty$.}.
  Let $u, v$ be two typical vertices.
  Then, for the typical hopcount in $\mathrm{PA}_t$
  \begin{equation}
    \left. d_H^{(t)}(u, v) \middle / 2K^\ast_t \right.
    \overset{\prob}{\longrightarrow} 1, \qquad \text{ as }t\rightarrow\infty.
    \label{eq:thm_hopcount_nontight}
  \end{equation}
  Moreover, for the models FPA and VPA from Definition \ref{def:pa_m_delta} and \ref{def:pa_gamma}, if $I(L-a)<\infty$, then
  \begin{equation}
    \left(d_H^{(t)}(u, v) -  2K^\ast_t \right)_{t\geq 1}
    \label{eq:thm_hopcount_tight}
  \end{equation}
    forms a tight sequence of random variables.
\end{theorem}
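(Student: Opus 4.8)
The plan is to reduce the hopcount to a counting problem on the weighted shortest path and to compare that path with the structure of the greedy path built in the upper-bound construction of Section \ref{sec:upperbound}. First I would establish the lower bound $d_H^{(t)}(u,v)\geq (2-o_{\prob}(1))K^\ast_t$. Since every edge-weight is at least $a>0$, any path realizing the weighted distance of $L$-length $2Q_t(1+o_\prob(1))$ (using Theorem \ref{thm:weighted_distance_nonexplosive}) cannot have more than roughly $2Q_t/a$ edges, which only gives an \emph{upper} bound on the hopcount; the real content of the lower bound is that the path must \emph{traverse} at least $\approx 2K^\ast_t$ edges. This I would get from the graph-distance structure of PA: the shortest weighted path from $u$ must pass through the core (highest-degree vertices), and since each generation of the local weak limit multiplies the degree exponent by $(\tau-2)^{-1}$, reaching the core from a typical vertex requires at least $K^\ast_t(1-o(1))$ generations, hence that many edges. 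A union bound over vertices at small graph distance, together with the local weak limit coupling from Proposition \ref{prop:local_weak_limit}, formalizes that whp no path of length $o(K^\ast_t)$ from a typical vertex reaches a vertex of sufficiently large degree.

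For the matching upper bound $d_H^{(t)}(u,v)\leq (2+o_\prob(1))K^\ast_t$, I would revisit the path constructed in Section \ref{sec:upperbound} that certifies $d_L^{(t)}(u,v)\leq 2Q_t(1+o_\prob(1))$. That path is built by greedily climbing from $u$ (and from $v$) through vertices of rapidly increasing degree for $\approx K^\ast_t$ steps each and then connecting the two high-degree endpoints through the core in $O(\log\log\log t)=o(K^\ast_t)$ steps. Its number of edges is $2K^\ast_t(1+o(1))$ by construction, so the weighted-shortest path has hopcount at most this. Combining with the lower bound yields \eqref{eq:thm_hopcount_nontight}.

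For the tightness statement \eqref{eq:thm_hopcount_tight} under $I(L-a)<\infty$, the idea is to write $L=a+X$ with $I(X)<\infty$ and observe that the weighted shortest path for $L$-weights is a near-optimal path for $X$-weights shifted by $a$ times its length. Since $X$ is in the explosive class, the $X$-weighted distance converges to a finite random variable (Theorem \ref{thm:meta-theorem}, explosive part), so the $X$-weight accumulated along any path of length $\ell$ reaching the core is bounded, forcing $a\ell$ to be within $O(1)$ of the total $L$-weight. More precisely, one compares the actual weighted-optimal path $\pi^\ast$ against a path $\pi_0$ realizing the explosive $X$-distance structure: on one hand $\|\pi^\ast\|_L = a\|\pi^\ast\| + \|\pi^\ast\|_X \leq a\|\pi_0\| + \|\pi_0\|_X = \|\pi_0\|_L$, and on the other hand $\|\pi^\ast\|_L \geq a\,\|\pi^\ast\| $; chasing these inequalities with the known tight control on $\|\pi_0\|$ (which should be $2K^\ast_t + O(1)$ for the explosive-type construction in FPA and VPA, where explicit degree-evolution estimates are available) pins $\|\pi^\ast\|$ to $2K^\ast_t + O_\prob(1)$. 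The restriction to FPA and VPA enters exactly here, as in Theorem \ref{thm:weighted_distance_nonexplosive}: these two models admit the sharp quantitative estimates on indegree growth needed to control the $O(1)$ fluctuations, whereas GVPA only gives the leading order.

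The main obstacle I anticipate is the lower bound on the hopcount — showing that \emph{no} shortcut of length $o(K^\ast_t)$ exists — since the weighted-shortest path is not the graph-shortest path, and one must rule out the possibility that the $L$-optimal path trades extra weight for far fewer edges by jumping early into a moderately-high-degree vertex. Handling this requires a careful two-sided argument: bounding from below the weight that must be paid to reach degree level roughly $t^{(\tau-2)^j}$ from below (so that reaching the core too quickly is too expensive), which in turn rests on the degree-truncation and first-passage estimates developed for the lower bound in Section \ref{sec:lowerbound}. The tightness part has the additional technical difficulty that one must transfer the explosion-based finiteness of $\|\pi_0\|_X$ into a statement about \emph{this specific} graph $\mathrm{PA}_t$ with high probability, not merely about the local weak limit, which is where the coupling radius $\kappa_{\delta_{\ref{prop:local_weak_limit}}}(t)$ and the model-specific concentration for FPA and VPA must be invoked.
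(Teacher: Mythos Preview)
Your proposal has the right ingredients but the logical structure is tangled, and there is one genuine gap.

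\textbf{Lower bound.} You make this far harder than it is. The hopcount is the number of edges on \emph{some} path from $u$ to $v$, so trivially $d_H^{(t)}(u,v)\geq d_G^{(t)}(u,v)$. Proposition~\ref{prop:upperbound_dg} then gives the lower bound in \eqref{eq:thm_hopcount_nontight} for GVPA and the tight lower bound for FPA and VPA immediately. There is no need to argue that the weighted-shortest path ``must pass through the core'', to invoke the LWL coupling, or to run the degree-truncation machinery from Section~\ref{sec:lowerbound}. Your anticipated ``main obstacle'' does not exist.

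\textbf{Upper bound.} Here you have an actual gap. You write that the constructed path of Section~\ref{sec:upperbound} has $2K^\ast_t(1+o(1))$ edges, ``so the weighted-shortest path has hopcount at most this.'' That implication is false: the $L$-optimal path is in general a \emph{different} path and may well have more edges than the constructed one. The correct link is precisely the observation you made earlier and then discarded: since every edge carries weight at least $a$, the $L$-optimal path $\pi^\ast$ satisfies $a\,\|\pi^\ast\|\leq \|\pi^\ast\|_L=d_L^{(t)}(u,v)$, i.e.\ $d_H^{(t)}(u,v)\leq d_L^{(t)}(u,v)/a$. The paper implements this cleanly by rescaling to $L/a$ (so all weights are $\geq 1$ and the optimal path is unchanged), noting $d_H^{(t)}(u,v)\leq d_{L/a}^{(t)}(u,v)$, and then bounding $d_{L/a}^{(t)}(u,v)$ via Proposition~\ref{prop:weighted_distance_in_t} together with $F_{L/a}^{(-1)}(x)=1+F_{L/a-1}^{(-1)}(x)$. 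This directly yields $d_{L/a}^{(t)}(u,v)\leq 2K^\ast_t + 2\sum_k F_{L/a-1}^{(-1)}\big(\exp(-(\tau-2)^{-k/2})\big)+M$; the sum is $o(K^\ast_t)$ in general (its terms tend to $0$) and bounded when $I(L-a)<\infty$, which simultaneously handles \eqref{eq:thm_hopcount_nontight} and \eqref{eq:thm_hopcount_tight}.

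\textbf{Tightness.} Once the upper bound is fixed as above, your comparison argument with $\pi_0$ is essentially the same computation as the paper's, just phrased pathwise rather than via the rescaled distance. No separate explosive-coupling argument on $\mathrm{PA}_t$ is needed.
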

\noindent Setting the weights $L\equiv 1$ in Theorem \ref{thm:hopcount_nonexplosive} immediately implies  the following corollary,  extending results in \cite{caravenna2016diameter, dereich2012typical} on the typical graph distance in FPA up to tight error terms, and confirming the tight error terms for VPA from \cite{monch2013distances}.
\begin{corollary}[Tight graph distances]
  Consider FPA or VPA with power-law exponent $\tau\in(2, 3)$.
  Let $u, v$ be two typical vertices.
  Then, for the typical graph distance in $\mathrm{PA}_t$
  \[
  \left(d_G^{(t)}(u, v) -  2K^\ast_t \right)_{t\geq 1}
  \]
  forms a tight sequence of random variables.
\end{corollary}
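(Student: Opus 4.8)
The plan is to derive the corollary directly from Theorem~\ref{thm:hopcount_nonexplosive} by specializing the edge-weight distribution to a constant. Set $L\equiv 1$. Then the support is bounded away from zero with $a=\sup\{x:F_L(x)=0\}=1>0$, so the first hypothesis of Theorem~\ref{thm:hopcount_nonexplosive} is met and the first conclusion gives $d_H^{(t)}(u,v)/2K^\ast_t\overset{\prob}{\to}1$. For the tight-fluctuation statement we must verify $I(L-a)<\infty$; but $L-a\equiv 0$ is identically zero, so $F_{L-a}^{(-1)}(y)=0$ for every $y\in(0,1)$, whence $I(L-a)=\sum_{k\geq1}0=0<\infty$. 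Hence, for the models FPA and VPA, Theorem~\ref{thm:hopcount_nonexplosive} yields that $\big(d_H^{(t)}(u,v)-2K^\ast_t\big)_{t\geq1}$ is tight.

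The only remaining point is to observe that when all edge weights are equal, the least-weighted path is (the length of) a shortest path in the unweighted graph, so the hopcount and the graph distance coincide: for any pair $u,v$ and any realization of $\mathrm{PA}_t$ with $L\equiv1$, every path $\pi$ has $\|\pi\|_L=\|\pi\|$, so $\argmin_{\pi\in\Omega_{u,v}}\|\pi\|_L=\argmin_{\pi\in\Omega_{u,v}}\|\pi\|$, and by the tie-breaking rule in Definition~\ref{def:graph_distances_1} the hopcount equals $\min_i\|\pi_i\|=d_G^{(t)}(u,v)$. Therefore $d_H^{(t)}(u,v)=d_G^{(t)}(u,v)$ deterministically, and substituting this identity into the tight sequence from the previous paragraph shows that $\big(d_G^{(t)}(u,v)-2K^\ast_t\big)_{t\geq1}$ is tight, which is the assertion of the corollary.

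There is essentially no obstacle here: the corollary is a clean specialization. The only mild subtlety worth flagging explicitly is the identification of $d_H$ with $d_G$ under constant weights, which relies on the convention that among weight-minimizing paths one selects the one with fewest edges — without that convention one could imagine a weight-minimizing path that is not edge-minimizing, but since \emph{all} paths of minimal hopcount are automatically weight-minimizing when $L\equiv1$, and conversely, the two notions agree. Everything else is inherited verbatim from Theorem~\ref{thm:hopcount_nonexplosive}, and the claim that this extends \cite{caravenna2016diameter, dereich2012typical} and confirms \cite{monch2013distances} follows because those works establish the leading order $2K^\ast_t=4\log\log(t)/|\log(\tau-2)|$ (up to lower-order terms) but not, in the cited generality, the $O(1)$-tightness of the fluctuations.
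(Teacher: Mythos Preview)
Your proof is correct and follows exactly the approach the paper takes: the paper simply states that setting $L\equiv 1$ in Theorem~\ref{thm:hopcount_nonexplosive} immediately yields the corollary, and you have written out the verification of the hypotheses and the identification $d_H=d_G$ under constant weights that make this immediate.
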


Before we move on to the results on finite graphs for weight distributions satisfying $I(L)<\infty$,
we discuss first passage percolation on the LWL. The following theorems show that the LWL is explosive if and only if $I(L)<\infty$. We start with  conservative edge weights.
Afterwards we prove explosiveness of the LWL for explosive edge weights. This  is then used to state the last theorem on the weighted distances in $\mathrm{PA}_t$ for explosive edge weights.
\begin{theorem}[FPP on the LWL, conservative case]\label{theorem:conservative_lwl}
  Consider a PPG or INT rooted in $\circledcirc$ with power-law exponent $\tau\in(2,3)$ with
  i.i.d.\ weights on the edges with distribution $F_L$ satisfying $I(L)=\infty$.
  If $F_L$ satisfies \eqref{eq:tightness_cond}, then, for FPA and VPA,
  \begin{equation}
  \bigg( \beta_k^{(\mathrm{LWL})}(\circledcirc) - \sum_{i=1}^k F_L^{(-1)}\left(\exp\big(-(\tau-2)^{-i/2}\big)\right) \bigg)_{k\geq 1}\label{eq:lwl-tightness}
  \end{equation}
  is a tight sequence of random variables.
  Regardless of \eqref{eq:tightness_cond}, for FPA, VPA, and GVPA, as $k$ tends to infinity,
  \begin{equation}
  \left. \beta_k^{(\mathrm{LWL})}(\circledcirc)\middle/\sum_{i=1}^k F_L^{(-1)}\left(\exp\big(-(\tau-2)^{-k/2}\big)\right) \right. \overset{a.s.}\longrightarrow 1.
  \label{eq:lwl-no-tightness}
\end{equation}
\end{theorem}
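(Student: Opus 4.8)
The plan is to analyze first passage percolation on the LWL (PPG or INT) directly, using the fact that in generation $k$ the tree contains roughly $\exp((\tau-2)^{-k})$ vertices whose "ages"/positions allow them to be joined by short-weight edges, so that one can reach deep generations paying only the small weights $F_L^{(-1)}(\exp(-(\tau-2)^{-i/2}))$ at step $i$. Concretely, I would first set up the layer structure: in the PPG each vertex has a label (its relative birth time $x\in(0,1)$ for the root, a position for each descendant), and the expected number of offspring of a vertex at position $x$ with "strength" exponent controlled by $\tau$ grows polynomially; iterating, the number of vertices in generation $k$ with position below $t^{-(\tau-2)^{k}/2}$-type thresholds is of order $\exp((\tau-2)^{-k})$. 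The key deterministic recursion is the same one underlying the definition of $K_t^\ast$ and $Q_t$ in \eqref{eq:k_ast_qt}: at level $k$ one has access to $N_k\approx\exp((\tau-2)^{-k/2})$ candidate edges, hence the minimum edge-weight among them is of order $F_L^{(-1)}(1/N_k)=F_L^{(-1)}(\exp(-(\tau-2)^{-k/2}))$; this explains the summand in \eqref{eq:lwl-tightness}.

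For the upper bound I would construct an explicit greedy path: starting from $\circledcirc$, at each step move to the child realizing (approximately) the minimum-weight edge among the $N_k$ available low-position children, paying at most $(1+\varepsilon)F_L^{(-1)}(\exp(-(\tau-2)^{-k/2}))$ with high probability by a first-moment/union bound over the $N_k$ edges. Since the number of available children grows doubly exponentially, the probability that the $k$-th step costs more than this can be summed over $k$ (Borel--Cantelli) to give the almost-sure upper bound in \eqref{eq:lwl-no-tightness}; under the stronger summability condition \eqref{eq:tightness_cond} the deviations at each level are summable in a stronger sense, yielding that the sum of overshoots is a.s. finite and hence the tightness in \eqref{eq:lwl-tightness}. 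For the matching lower bound I would argue that reaching generation $k$ forces traversal of at least one edge at each of the levels $1,\dots,k$ along \emph{some} ancestral line, and that the relevant minimum over the branching structure cannot be much smaller than $F_L^{(-1)}(\exp(-(\tau-2)^{-i/2}))$ at level $i$: a second-moment or direct union-bound estimate shows that with overwhelming probability no vertex in generation $k$ is reached at cost below $(1-\varepsilon)\sum_{i\le k}F_L^{(-1)}(\exp(-(\tau-2)^{-i/2}))$, again made quantitative enough to run Borel--Cantelli for the a.s. statement, and sharpened under \eqref{eq:tightness_cond} for tightness. Much of this machinery — the upper bound via path construction and the lower bound — should already be available from Sections \ref{sec:upperbound} and \ref{sec:lowerbound} applied to $\mathrm{PA}_t$; the content here is to transfer those finite-graph estimates to the infinite LWL by a truncation argument, using Proposition \ref{prop:local_weak_limit} to couple $\mathrm{LWL}_{\kappa(t)}$ with $\widetilde{\mathcal B}_G^{(t)}(q,\kappa(t))$ for a slowly growing $\kappa(t)$, and observing that $\beta_k^{(\mathrm{LWL})}$ depends only on finitely many generations.

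The main obstacle, I expect, is the lower bound together with the passage from finite graphs to the LWL. On the finite graph $\mathrm{PA}_t$ the relevant scale is cut off at $K_t^\ast\sim 2\log\log t/|\log(\tau-2)|$, so the finite-graph theorems only control $\beta_k^{(t)}$ for $k\le K_t^\ast$; to get \emph{all} $k$ in the LWL I need the coupling radius $\kappa(t)$ to exceed any fixed $k$ eventually, which is fine, but I must also ensure that the events controlling $\beta_k$ (existence of a cheap path of the right cost, and non-existence of an unreasonably cheap one) are "local" — i.e. determined inside $\mathcal B_G(\circledcirc,k)$ for the upper bound and inside a somewhat larger but still finite ball for the lower bound. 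The subtlety is that a cheap path to generation $k$ could in principle detour through deep generations and come back; ruling this out requires showing that such detours are never weight-efficient, which follows because each extra edge costs a positive amount and the optimal cost at level $k$ is already an increasing function of $k$ under $I(L)=\infty$. Handling the GVPA/INT case without the explicit formulas available for FPA and VPA — so only the $\overset{a.s.}{\longrightarrow}1$ statement, not tightness — means I must extract the $N_k\approx\exp((\tau-2)^{-k/2})$ growth rate from the branching-process description of the INT using only concavity of the attachment rule $f$ and $\gamma_f\in(1/2,1)$, rather than exact moment computations; this is where I would lean on the black-box local weak convergence and the degree-evolution estimates from \cite{dereich2013random}.
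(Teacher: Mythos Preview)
Your proposal eventually converges to exactly the paper's approach: couple $\mathrm{LWL}_k$ to the graph-neighbourhood of a typical vertex in $\mathrm{PA}_t$ via Proposition~\ref{prop:local_weak_limit} (choosing $t$ large enough that the coupling holds to depth $k$ with probability $\ge 1-\delta$), then import the upper-bound path construction from Proposition~\ref{prop:weighted_distance_in_t} and the lower-bound neighbourhood-size estimate from Lemma~\ref{lemma:small_neighbourhoods}. The paper does not attempt the direct branching-process analysis you sketch in your first two paragraphs; it works entirely through the finite graph and transfers.

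The one genuine misconception in your write-up is the ``detour'' worry. The LWL is a \emph{tree}, so there is a unique self-avoiding path from $\circledcirc$ to any vertex in generation $k$, and that path has exactly $k$ edges. Hence $\beta_k^{(\mathrm{LWL})}$ is determined by $\mathrm{LWL}_k$ trivially; there is no possibility of detouring through deeper generations and coming back. On the finite-graph side, the layer-crossing lower bound \eqref{eq:weighted_distance_lb_sum_min} already accounts for arbitrary detours: any path from $q$ to $\partial\mathcal B_G^{(t)}(q,k)$ must contain, for each $i<k$, at least one edge joining $\partial\mathcal B_G^{(t)}(q,i)$ to $\partial\mathcal B_G^{(t)}(q,i+1)$ (since graph distance changes by at most one along an edge), and these edges are distinct. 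So the obstacle you flag does not exist, and the locality you need is automatic once you remember the limit object is a tree. With that clarified, your plan matches the paper's proof.
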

\noindent
Observe the similarities between Theorem \ref{thm:weighted_distance_nonexplosive} and Theorem \ref{theorem:conservative_lwl}. In fact, the proof of Theorem \ref{theorem:conservative_lwl} heavily relies on couplings between the LWL and $\mathrm{PA}_t$, which are possible by Proposition \ref{prop:local_weak_limit}. These couplings allow for the intermediate lemmas and propositions to consider whichever object, i.e., $\mathrm{PA}_t$ vs.\ LWL, is more suitable and lead to the similarities between the two theorems.

We now proceed to the universality class of weight distributions satisfying $I(L)<\infty$.
This holds for most \emph{well-known} distributions with support starting at 0, e.g.\ the exponential distribution.
After stating that for these weight distributions the LWL is explosive, we proceed with a theorem on the typical weighted distance in finite graphs. Recall Definition \ref{def:explosive_graph} of an explosive graph.
\begin{theorem}[FPP on the LWL, explosive case]\label{theorem:explosion_time_ppg}
  Consider a PPG or INT rooted in $\circledcirc$ with power-law exponent $\tau\in(2,3)$ with
  i.i.d.\ weights on the edges with distribution $F_L$ satisfying $I(L)<\infty$.
  Then the explosion time of the LWL is an almost surely finite random variable, i.e.,
  \begin{equation}
  \prob\left(\beta_{\infty}^{\mathrm{LWL}} < \infty \right) = 1.
  \nonumber
  \end{equation}
\end{theorem}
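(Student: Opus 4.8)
\textbf{Proof proposal for Theorem~\ref{theorem:explosion_time_ppg}.}
The plan is to exhibit, with probability one, an infinite self-avoiding path $\circledcirc = w_0, w_1, w_2, \dots$ in the LWL along which the accumulated edge-weight converges. Since the LWL is a multi-type branching tree with offspring distributions that are (asymptotically) heavy-tailed with power-law exponent $\tau \in (2,3)$, the degrees of the vertices along a greedily-chosen ray can be made to grow doubly exponentially. Concretely, I would build the ray inductively: having reached $w_k$, let $w_{k+1}$ be a child (or neighbour, in the PPG case one must be slightly careful with the parent/child types) of $w_k$ whose own forward-degree $D_{k+1}$ is at least $(\tau-2)^{-c\,2^{k}}$ for a suitable constant $c>0$. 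The point is that a vertex of forward-degree $D$ has, in one generation, a descendant of forward-degree at least roughly $D^{1/(\tau-2)}$ with probability bounded below (this is the standard ``one step of the explosive branching process'' estimate used in \cite{sfp2017explosion1, komjathy2018explosion} and in the conservative analysis of this paper), so iterating gives the doubly-exponential growth, and by a Borel--Cantelli / supermartingale argument along the tree one can make this happen simultaneously for all large $k$ almost surely, possibly after a random but finite initial segment.

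The second ingredient is the weight estimate. Conditioned on the chosen ray and the degrees $(D_k)_{k\ge1}$, the weight $L_{e_{k}}$ on the edge $e_k = (w_{k-1},w_k)$ is selected as (essentially) the minimum of $L$ over $D_{k-1}$ i.i.d.\ copies, so $L_{e_k}$ is stochastically dominated by a random variable with distribution function $x \mapsto 1-(1-F_L(x))^{D_{k-1}} \ge F_L(x)\,D_{k-1}\wedge$ const; inverting, $L_{e_k}$ is at most of order $F_L^{(-1)}(1/D_{k-1})$ with overwhelming probability. Since $D_{k-1} \gtrsim (\tau-2)^{-c 2^{k-1}} = \exp\!\big(c\,2^{k-1}|\log(\tau-2)|\big)$, choosing the indexing so that $c\,2^{k-1}|\log(\tau-2)|$ lies between $\mathrm{e}^{k-1}$ and $\mathrm{e}^{k}$ (reindexing $k$ if necessary) gives $F_L^{(-1)}(1/D_{k-1}) \le F_L^{(-1)}(\mathrm{e}^{-\mathrm{e}^{k'}})$ for a linear reindexing $k\mapsto k'$, up to a bounded multiplicative factor. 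Summing over $k$, the total weight along the ray is bounded by a constant multiple of $\sum_{k} F_L^{(-1)}(\mathrm{e}^{-\mathrm{e}^{k}}) = I(L) < \infty$ by hypothesis, plus the finite weight of the random initial segment. Hence $\beta_\infty^{\mathrm{LWL}}(\circledcirc) \le \|\,\text{ray}\,\|_L < \infty$ almost surely.

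Making the two ingredients compatible is where care is needed, so I would organize it as follows. First, I would state and prove a deterministic lemma: if a rooted weighted tree contains a ray along which $D_{k}$ grows doubly-exponentially at rate matching the $\mathrm{e}^k$ scale and the $k$-th edge-weight is at most $F_L^{(-1)}(\mathrm{e}^{-\mathrm{e}^k})$ for all large $k$, then the tree is explosive when $I(L)<\infty$ — this is just the summation above. Second, I would show that the LWL almost surely contains such a ray: condition on the degree sequence along a greedily grown ray, use the forward-degree recursion of the PPG/INT to get the doubly-exponential growth of $D_k$ with positive probability per step (and iterate using independence across generations plus a $0$--$1$ type argument, or restart from the first vertex whose forward-degree exceeds a large threshold, which exists a.s.\ because the offspring distribution is unbounded), and then condition on $(D_k)$ and use independence of the edge-weights $\{L_e\}$ from the tree structure to invoke Borel--Cantelli so that $L_{e_k} \le F_L^{(-1)}(\mathrm{e}^{-\mathrm{e}^k})$ eventually. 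I would expect the main obstacle to be the bookkeeping of the PPG/INT offspring mechanism — in particular that in the PPG a vertex's forward-degree is governed by a Gamma/Pólya weight with a heavy-tailed tilt, and one must verify that the ``$D \to D^{1/(\tau-2)+o(1)}$ with probability $\ge c$'' step genuinely holds uniformly in the type (root vs.\ non-root, age-dependent weight), rather than anything in the weight-summation, which is elementary once the degree growth is in hand. Since the paper instructs us to treat PPG and INT as a black box whose local weak convergence is known, I would lean on whatever quantitative offspring estimates were already established for the conservative case (Theorem~\ref{theorem:conservative_lwl}) and simply observe that the \emph{upper} bound construction there only used the doubly-exponential degree growth, so the same ray works here and the finiteness of the weight-sum is now \emph{automatic} from $I(L)<\infty$.
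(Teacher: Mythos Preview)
Your direct approach --- build a greedy ray in the LWL along which degrees grow doubly exponentially and edge-weights sum --- is essentially correct, but the paper takes a genuinely different route. Instead of working in the LWL at all, the paper exploits the finite-graph upper bounds already established in Propositions~\ref{prop:graph_distance_to_high_degree} and~\ref{prop:weighted_distance_in_t}: a typical vertex $q$ in $\mathrm{PA}_t$ reaches graph distance $2K_t+C_{\ref{prop:graph_distance_to_high_degree}}$ with total weight at most $(1+\varepsilon)Q_t+\sum_{j\le C_{\ref{prop:graph_distance_to_high_degree}}}L_j$, and since $I(L)<\infty$ forces $Q_t\to Q_\infty<\infty$, this bound is uniform in $t$. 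The local-weak-convergence coupling (Proposition~\ref{prop:local_weak_limit}) then identifies $d_L^{(t)}(q,\partial\mathcal{B}_G^{(t)}(q,a(t)))$ with $\beta_{a(t)}^{\mathrm{LWL}}$ for some $a(t)\to\infty$, whence $\beta_\infty^{\mathrm{LWL}}<\infty$ a.s. The gain of the paper's route is that the PPG/INT remain a pure black box --- no direct offspring estimates in the limit tree are ever needed --- which is exactly the bookkeeping obstacle you flagged; the cost is that the infinite-tree statement now logically depends on the finite-graph construction, so your suggestion to ``lean on whatever was established for Theorem~\ref{theorem:conservative_lwl}'' would not help, since that theorem is also proved via the finite graph. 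Two small repairs your sketch would need if carried out directly: (i) you cannot both select $w_{k+1}$ for its large forward-degree \emph{and} claim $L_{e_k}$ is the minimum over all $D_{k-1}$ edges --- the fix is to first restrict to the (still doubly-exponentially many) children meeting the degree threshold and minimise the weight over those; (ii) your reindexing ``$c\,2^{k-1}|\log(\tau-2)|$ lies between $\mathrm{e}^{k-1}$ and $\mathrm{e}^k$'' is not literally true since $2^k$ and $\mathrm{e}^k$ diverge, but the correct statement --- that $\sum_k F_L^{(-1)}(\mathrm{e}^{-ca^k})$ converges for one $a>1$ iff for all --- follows from the substitution $j=\lfloor k\log a\rfloor$, exactly the integral/sum manipulation around~\eqref{eq:int_sum_bounds}.
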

\begin{theorem}[Weighted distance, explosive case]\label{thm:weighted_distance_explosive}
  Consider PA with power-law exponent $\tau\in(2, 3)$, i.i.d.\ weights on the edges with distribution $F_L$
  satisfying $I(L)<\infty$.
  Let $u, v$ be two typical vertices.
  Then, for the typical weighted distance in $\mathrm{PA}_t$
  \begin{equation}
    d_L^{(t)}(u,v) \overset{d}{\longrightarrow} \beta_{\infty}^{(1)} + \beta_{\infty}^{(2)}, \qquad \text{ as }t\rightarrow\infty,
    \nonumber
  \end{equation}
  where $\beta_{\infty}^{(1)}$ and $\beta_{\infty}^{(2)}$ are two i.i.d.\ copies of the explosion time of the LWL.
\end{theorem}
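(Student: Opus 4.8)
The plan is to sandwich $d_L^{(t)}(u,v)$ between an upper and a lower bound that both converge in distribution to $\beta_\infty^{(1)}+\beta_\infty^{(2)}$, using two ingredients: the local weak convergence coupling of Proposition~\ref{prop:local_weak_limit} and the almost sure finiteness of the explosion time of the LWL from Theorem~\ref{theorem:explosion_time_ppg}. Let $\kappa(t)$ be the coupling radius of Proposition~\ref{prop:local_weak_limit}, possibly replaced by a smaller slowly growing function; since the graph distance between two typical vertices diverges, there is an event $\mathcal{E}_t$ with $\prob(\mathcal{E}_t)\to 1$ on which the weighted balls $\widetilde{\mathcal{B}}^{(t)}_G(u,\kappa(t))$ and $\widetilde{\mathcal{B}}^{(t)}_G(v,\kappa(t))$ are vertex-disjoint and, after attaching the i.i.d.\ weights, rooted isomorphic to two \emph{independent} copies $\mathrm{LWL}^{(1)}$, $\mathrm{LWL}^{(2)}$ of the local weak limit truncated at generation $\kappa(t)$, with weights respected. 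Write $\beta_\infty^{(1)},\beta_\infty^{(2)}$ for the (i.i.d.) explosion times of these copies; note that $\beta^{(t)}_k(u)$ and $\beta^{(t)}_k(v)$ are determined by these weighted balls for $k\leq\kappa(t)$, since an $L$-geodesic from a vertex $q$ to $\partial\mathcal{B}_G(q,k)$ never leaves $\mathcal{B}_G(q,k)$.

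\emph{Lower bound.} On $\mathcal{E}_t$ the endpoint $v$ lies outside $\mathcal{B}^{(t)}_G(u,\kappa(t))$, so any path $\pi\in\Omega_{u,v}$ has a prefix from $u$ to $\partial\mathcal{B}^{(t)}_G(u,\kappa(t))$ staying inside $\mathcal{B}^{(t)}_G(u,\kappa(t))$ and a suffix from $\partial\mathcal{B}^{(t)}_G(v,\kappa(t))$ to $v$ staying inside $\mathcal{B}^{(t)}_G(v,\kappa(t))$; the two balls being disjoint, these sub-paths use disjoint edge sets, so
\begin{equation*}
 d_L^{(t)}(u,v)\ \geq\ \beta^{(t)}_{\kappa(t)}(u)+\beta^{(t)}_{\kappa(t)}(v).
\end{equation*}
By the coupling the right-hand side has the law of $\beta^{(\mathrm{LWL},1)}_{\kappa(t)}+\beta^{(\mathrm{LWL},2)}_{\kappa(t)}$, and since $\beta^{(\mathrm{LWL})}_k\uparrow\beta^{(\mathrm{LWL})}_\infty$ almost surely with $\kappa(t)\to\infty$, monotone convergence yields $\liminf_{t\to\infty}\prob(d_L^{(t)}(u,v)>x)\geq\prob(\beta_\infty^{(1)}+\beta_\infty^{(2)}>x)$ for every $x$.

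\emph{Upper bound.} Here the structural input, which I expect to be extracted in Section~\ref{sec:upperbound} on the way to Theorem~\ref{theorem:explosion_time_ppg}, is a ``core'' $\mathrm{Core}_t\subseteq[t]$ of vertices of sufficiently large degree for which (i) $\mathrm{diam}_L^{(t)}(\mathrm{Core}_t)\overset{\prob}{\to}0$, because the core is internally densely connected and, among the huge number of edges incident to it, carries edges of weight arbitrarily close to $\sup\{x:F_L(x)=0\}=0$; and (ii) from a typical vertex $q$ one has $d_L^{(t)}(q,\mathrm{Core}_t)\leq\beta_\infty^{(q)}+o_{\prob}(1)$, where $\beta_\infty^{(q)}$ is the explosion time of the local weak limit coupled to the neighbourhood of $q$. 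Bound (ii) is obtained by following, for a fixed large number of generations, the weight-minimising ray of the coupled $\mathrm{LWL}$ --- which by Theorem~\ref{theorem:explosion_time_ppg} accumulates $L$-weight within $\varepsilon$ of $\beta_\infty^{(q)}$ --- and then continuing from the vertex it reaches, which has polynomially large degree in $\mathrm{PA}_t$, into $\mathrm{Core}_t$ at vanishing additional cost. Granting (i)--(ii), the triangle inequality through the core gives
\begin{equation*}
 d_L^{(t)}(u,v)\ \leq\ d_L^{(t)}(u,\mathrm{Core}_t)+\mathrm{diam}_L^{(t)}(\mathrm{Core}_t)+d_L^{(t)}(v,\mathrm{Core}_t),
\end{equation*}
where the two leading terms are asymptotically independent --- their dominant contributions accrue in the asymptotically independent neighbourhoods of $u$ and $v$ --- and each converges in distribution to $\beta_\infty^{(\mathrm{LWL})}$; hence $\limsup_{t\to\infty}\prob(d_L^{(t)}(u,v)>x)\leq\prob(\beta_\infty^{(1)}+\beta_\infty^{(2)}\geq x)$ for every $x$.

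\emph{Conclusion and main obstacle.} Combining the two bounds, $\prob(d_L^{(t)}(u,v)\leq x)\to\prob(\beta_\infty^{(1)}+\beta_\infty^{(2)}\leq x)$ at every continuity point $x$ of the limit law, which is the asserted convergence in distribution; $\beta_\infty^{(1)},\beta_\infty^{(2)}$ are i.i.d.\ copies of $\beta_\infty^{(\mathrm{LWL})}$ by Proposition~\ref{prop:local_weak_limit} and almost surely finite by Theorem~\ref{theorem:explosion_time_ppg}. The routine steps are the whp disjointness and asymptotic independence of the two neighbourhoods and the bookkeeping at continuity points. The genuine difficulty --- and the step I expect to dominate the work --- is item (ii): quantitatively matching the cost of descending from a typical vertex into the high-degree core of $\mathrm{PA}_t$ with the explosion time of its local weak limit. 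Proposition~\ref{prop:local_weak_limit} couples $\mathrm{PA}_t$ to the $\mathrm{LWL}$ only up to $\kappa(t)$ generations, so after following the optimal ray a bounded number of steps one must switch to a direct, $\mathrm{PA}_t$-specific construction reaching $\mathrm{Core}_t$ and show that this switch adds only $\varepsilon$ to the cost, uniformly in $t$; establishing (i), that the core has vanishing $L$-diameter, is the other main technical point.
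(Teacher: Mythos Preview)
Your outline matches the paper's proof: Proposition~\ref{prop:lowerbound_explosive} is exactly your lower bound, and Proposition~\ref{prop:upperbound_explosive} is your upper bound through the inner core $\mathrm{Inner}_\alpha$ (your $\mathrm{Core}_t$), with Proposition~\ref{prop:innercore_bounded} supplying~(i).

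Two points where the paper's execution of~(ii) differs from your sketch are worth knowing. First, the vertex reached by following the optimal ray for a bounded number of steps does \emph{not} have polynomially large degree; the paper only guarantees (via Lemmas~\ref{lemma:no_explosion_bounded_degrees} and~\ref{lemma:weighted_distance_to_high_degree}) a vertex $q'$ of \emph{constant} degree $s_0$, and the climb from $q'$ to $\mathrm{Inner}_\alpha$ is a separate greedy-path construction (Proposition~\ref{prop:weighted_distance_in_t}) whose total weight is at most $\varepsilon$ because $I(L)<\infty$ makes the tail sum in~\eqref{eq:weighted_distance_in_t} small once $h_\tau(s_0)$ is large. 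Second, the independence you invoke between ``reach $q'$'' and ``climb to the core'' is secured by a \emph{sprinkling} argument you do not mention: the paper works at two times $\alpha t<t$, locates $q'$ using only edges of $\mathrm{PA}_{\alpha t}$, and builds the greedy path from $q'$ to $\mathrm{Inner}_\alpha$ using only $\alpha$-connectors arriving in $(\alpha t,t]$. This is what makes the cost bound from $q'$ to the core hold uniformly, regardless of how $q'$ was selected within $\mathrm{PA}_{\alpha t}$.
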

\noindent
It is remarkable that the limiting random variable does \emph{not} depend on $t$ and thus the graph distance is of much larger order than the weighted distance.
The underlying intuition is that in the graph-neighbourhoods of $u$ and $v$ there is a vertex with sufficiently high degree. The weighted distances to these vertices converge in distribution to $\beta_\infty^{(u)}$ and $\beta_\infty^{(v)}$.
There are many paths connecting these high degree vertices, where the number of edges on these paths is \emph{similar} to the graph distance, allowing to bound its total weight from above and show that it tends to zero.

\subsection{Discussion and open problems}
This paper obtains the first results on weighted distances in preferential attachment models.
The same universality classes of weight distributions from Definition \ref{def:explosion_char}  appear for static models as the Configuration Model (CM)  \cite{adriaans2017weighted, baroni2017nonuniversality, Baroni2019}, and for the spatial models scale-free percolation (SFP), geometric inhomogeneous random graphs (GIRG), hyperbolic random graphs (HRG) \cite{sfp2017explosion1, komjathy2018explosion}, when $\tau\in(2,3)$. Although parts of our proof techniques are similar to techniques used for FPP in SFP, GIRG, HRG, and CM \cite{adriaans2017weighted, baroni2017nonuniversality,Baroni2019,komjathy2018explosion}, we claim that
the \emph{sprinkling argument} demonstrated below for the upper bound on the weighted distance is more general than the delicate degree-dependent or weight-dependent percolation arguments that were applied to these \emph{static} models. Our technique developed for a \emph{dynamically growing} model can be adapted to obtain similar results on static models for weight distributions in the conservative class. In particular, the improved recursion on $(s_k)_{k\geq 0}$ in \eqref{eq:sk} below can be used to prove tightness of typical weighted distances in CM for $\tau\in(2,3)$ around the main term under condition \eqref{eq:tightness_cond}, proving part of \cite[Problem 2.10]{adriaans2017weighted}.

For a fixed $\tau\in(2,3)$, the main difference between CM and PA is that all distances in PA are roughly twice the distance in CM. For CM, the main term of the \emph{graph} distance is $2\log\log(t)/|\log(\tau-2)|$ \cite{hofstad2005distancescminfinitevariance}, compared to $4\log\log(t)/|\log(\tau-2)|$ in PA.
Combining the results on weighted distances in CM \cite{adriaans2017weighted} with our results, after a variable transformation on the sum in $Q_t$ in \eqref{eq:k_ast_qt}, the factor two extends to \emph{weighted} distances, i.e.,
\[
\left. d_L^{\mathrm{CM}_t(\tau)}(u, v) \middle / d_L^{\mathrm{PA}_t(\tau)}(u, v)\longrightarrow 2\right.,\qquad\text{as $t\rightarrow\infty$.}
\]
It is commonly believed this is due to the difference in construction: in CM high degree vertices are often directly connected via an edge, while in PA we need two edges to connect two high degree vertices.
This factor two is studied in \cite{dereich2017distances_pa_critical}, where the authors show how this factor two vanishes
in GVPA($f$) with power-law exponent $\tau=3$ and a function $f$ that has logarithmic corrections.

For some other dynamically growing models results on graph distances are not yet known, although degree distributions and clustering coefficients have been studied. Examples are preferential attachment with edge steps \cite{alves2019agglomeration, alves2017preferential}, and the age-dependent random connection model \cite{gracar2018age}. For spatial preferential attachment as introduced in \cite{jacob2013spatial, jacob2015spatial}, an upper bound on the graph distance is established in \cite{hirsch2018distances_spatial_pa}. In the proof for the upper bound, the authors show that two vertices with high degree are connected via two edges, similarly to the non-spatial models.
 To the extent of the authors' knowledge, no matching lower bound is known.
It would be of interest to see if the typical graph distance on these models grows at order $\Theta(\log\log(t))$ as well, and if our technique to determine the typical weighted distance translates to these models.

 Little is known for graph distances in FPA and GVPA when $\tau>3$. For FPA it is shown in \cite{dommers2010diameters} that the diameter of the graph and the typical graph distance are of order $\Theta(\log(t))$, but the precise main order of growth remains unknown.
 This is in sharp contrast with CM, where the graph distance grows as $\Theta(\log(t))$ and the precise order is found in \cite{hofstad2005distancescmfinitevariance}.
 FPP and the weighted distances on CM with finite variance degrees are studied in \cite{bhamidi2010first, Bhamidi2017, bhamidi2014front}.

Also, it would be of interest to study the hopcount in more detail. For Erd\H{os}-R\'enyi random graphs (ERRG) with i.i.d.\ exponential weights on the edges, it is known that the hopcount is much larger order than the graph distance \cite{fppErdos2011}.
We conjecture that a similar result should hold for preferential attachment models with explosive edge weights, as the models contain a subgraph on at least $\sqrt{t}$ vertices (called the inner core in Section \ref{sec:upperbound}) that dominates a dense ERRG.
Moreover, we expect that our results on the tightness of the hopcount in Theorem \ref{thm:hopcount_nonexplosive} for weights of the form $L=1+X, I(X)<\infty$, should extend to the case where $I(X)=\infty$.
For this a better upper bound is necessary.
For the conservative class, extending Theorem \ref{thm:hopcount_nonexplosive} on the hopcount to weights that are not bounded away from zero is more difficult, and the hopcount within the dense inner core should be studied for this in more detail.
We also believe that \eqref{eq:tightness_cond} is not a necessary condition. However, we were not able to remove it in our proof of the upper bound.

Lastly, it would be interesting to study the geodesic, the least weighted path, in the neighbourhood of $u$ and $v$ in more detail.
Would it be possible to prove local weak limits of the \emph{geodesic} of the parts close to $u$ and $v$?
For CM, local weak limit theorems are established in \cite{dereichOrtgiese2018local_neighbourhoods_cm}.
We conjecture that using Theorem \ref{theorem:explosion_time_ppg} in the present paper and results from \cite{berger2014asymptotic,dereich2013random,garavaglia2019preferential},
similar results can be derived for PA. Another interesting question would be to analyse the age distribution of the vertices on the geodesic beyond the local neighbourhood of $u$ and $v$.

\section{Upper bound on the weighted distance}\label{sec:upperbound}
In this section we prove the upper bounds for Theorems \ref{thm:weighted_distance_nonexplosive} and \ref{thm:weighted_distance_explosive}, respectively. The upper bound of Theorem \ref{theorem:conservative_lwl} which follows from the same proof techniques, is postponed to Section \ref{sec:cons-lwl}. Recall $I(L)$ from \eqref{eq:il_sum_of_inverse_distribution}, and $Q_t$ from \eqref{eq:k_ast_qt}.
\begin{proposition}[Upper bound on the weighted distance, conservative case]\label{prop:upper_bound}
  Consider PA under the same conditions as Theorem \ref{thm:weighted_distance_nonexplosive}.
  Recall $I(L) = \infty$.
  Then for every $\delta, \varepsilon > 0$, when $t$ is sufficiently large
  \begin{equation}
    \prob\left(d_L^{(t)}(u,v) \leq (1+\varepsilon)2Q_t\right)
    \geq 1-\delta.\nonumber\label{eq:weighted_distance_nonexplosive_upperbound}
  \end{equation}
  Moreover, for the models FPA and VPA from Definition \ref{def:pa_m_delta} and \ref{def:pa_gamma},
  if $F_L$ satisfies \eqref{eq:tightness_cond}, there exists a constant $M_{\ref{prop:lowerbound_unexplosive}}=M_{\ref{prop:lowerbound_unexplosive}}(\delta)$ such that for $t$ sufficiently large
  \[
  \prob\left(d_L^{(t)}(u,v) \leq 2Q_t + 2M_{\ref{prop:lowerbound_unexplosive}}\right) \geq 1-\delta.
  \]
\end{proposition}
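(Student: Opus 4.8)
The proof constructs an explicit path between the two typical vertices $u$ and $v$, whose total $L$-weight is controlled. The path is built in three parts: a piece near $u$ that reaches a vertex of sufficiently high degree, a symmetric piece near $v$, and a ``core-crossing'' piece that connects the two high-degree vertices through the dense part of $\mathrm{PA}_t$ at negligible weighted cost. The main work is the first piece, which is where the sum $Q_t$ (and $K^\ast_t$) enters.

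\medskip

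\textbf{Step 1: Greedy ``layer-climbing'' near $u$.} Starting from $u$, I would build a path to a vertex of degree at least $t^{\rho}$ for some small $\rho>0$ by a sequence of roughly $K^\ast_t$ steps. At step $k$, I am at a vertex whose degree is (at least) some threshold $D_{k-1}$; among its neighbours (or among the at-most-one-hop reachable vertices) one finds, with high conditional probability, a vertex of degree at least $D_k$ for a suitable increasing sequence $(D_k)$ growing doubly-exponentially with rate governed by $1/(\tau-2)$ — this is the standard fact that in PA, a vertex of degree $D$ is connected to a vertex of degree $\sim D^{1/(\tau-2)}$. The edge used in step $k$ should be chosen to have small weight: since a vertex of degree $D_{k-1}$ has $\asymp D_{k-1}$ incident edges, the minimum of $D_{k-1}$ i.i.d.\ copies of $L$ is concentrated around $F_L^{(-1)}(1/D_{k-1})$, and calibrating $D_{k-1} \approx \exp((\tau-2)^{-(k-1)/2})$ makes this minimum exactly the $k$-th summand $F_L^{(-1)}(\exp(-(\tau-2)^{-k/2}))$ of $Q_t$. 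Summing the step-$k$ weights over $k \le K^\ast_t$ gives $Q_t$; the number of steps $K^\ast_t$ is dictated by requiring $D_{K^\ast_t} \approx t^{\rho}$, i.e.\ $(\tau-2)^{-K^\ast_t/2} \approx \rho\log t$, which is precisely $K^\ast_t = \lfloor 2\log\log t / |\log(\tau-2)|\rfloor$ up to lower-order corrections.

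\medskip

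\textbf{Step 2: Crossing the inner core.} Once $u$ and $v$ each reach a vertex of degree $\ge t^{\rho}$, I would invoke (or prove) the fact that the subgraph of $\mathrm{PA}_t$ induced on high-degree vertices — the ``inner core'' referred to in the paper — is so dense that any two of its vertices are connected by a path with $O(1)$ (or at worst $o(K^\ast_t)$) edges, each of $O(1)$ weight; more precisely one dominates a dense Erd\H{o}s–R\'enyi graph on $\ge \sqrt t$ vertices, where typical weighted distance is $O(1)$ whp. Hence the weighted cost of the core-crossing piece is $o_{\prob}(Q_t)$ (indeed $O_{\prob}(1)$), which is absorbed into the error term.

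\medskip

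\textbf{Step 3: Probabilistic bookkeeping and the two regimes.} For the first (non-quantitative) assertion $d_L^{(t)}(u,v)\le (1+\varepsilon)2Q_t$ w.h.p., I only need each of the $K^\ast_t$ climbing steps to succeed with probability $1 - o(1/K^\ast_t)$ and each contributed weight to be within a $(1+\varepsilon)$ factor of $F_L^{(-1)}(\exp(-(\tau-2)^{-k/2}))$ on average, which follows from concentration of the minimum of many i.i.d.\ weights plus a union bound; the coupling to the local weak limit (Proposition~\ref{prop:local_weak_limit}) handles the earliest generations where degrees are still $O(1)$. For the refined $O(1)$-tightness statement for FPA and VPA under condition \eqref{eq:tightness_cond}, the improvement comes from \emph{not} losing a multiplicative $(1+\varepsilon)$ at every level: instead one sets up an improved recursion on a sequence $(s_k)_{k\ge 0}$ (the ``$s_k$'' recursion alluded to after the statement) tracking the actual degree reached versus the target, and shows the accumulated slack telescopes. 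Condition \eqref{eq:tightness_cond} — a $\sum \frac1k(F_L^{(-1)}(e^{-e^k}) - a)<\infty$ bound with $a=\sup\{x: F_L(x)=0\}$ — is exactly what makes the series of per-level overshoot-corrections summable, yielding an $M_{\ref{prop:lowerbound_unexplosive}}(\delta)$ for which the bound $d_L^{(t)}(u,v)\le 2Q_t + 2M_{\ref{prop:lowerbound_unexplosive}}$ holds with probability $\ge 1-\delta$; the factor $2$ in $2Q_t$ and $2M_{\ref{prop:lowerbound_unexplosive}}$ reflects that we do the climbing argument independently from both $u$ and $v$.

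\medskip

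\textbf{Main obstacle.} The delicate point is Step 1 combined with the tightness refinement in Step 3: one must show that the greedy climb succeeds \emph{simultaneously} at all $\asymp\log\log t$ levels while controlling the weight without paying a constant factor per level, and that the randomness in the degree actually reached at each level (which fluctuates, so the true threshold $D_k$ is itself random) does not cause the number of required levels, hence $Q_t$, to fluctuate by more than $O(1)$. This is where the precise ``sprinkling'' construction and the improved $(s_k)$-recursion — and the hypothesis \eqref{eq:tightness_cond} — are essential; it is also the step that has to be carried out separately (and more carefully) for FPA, where the edges from a new vertex are negatively correlated, versus VPA/GVPA, where they are conditionally independent.
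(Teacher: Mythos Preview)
Your outline matches the paper's strategy: a multi-segment path (from each of $u,v$ to the inner core, plus a core-crossing segment), with the main weight coming from the greedy layer-climb governed by the doubly-exponential degree sequence $(s_k)$, and the tightness under \eqref{eq:tightness_cond} obtained via the refined recursion $s_k = s_{k-1}^{(1-\varepsilon_{k-1})/(\tau-2)}$ with $\varepsilon_k = (k+2)^{-2}$.

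Two points of clarification. First, for the initial segment (reaching degree $\ge s_0$ from the typical vertex $q$), the paper does \emph{not} use the local-weak-limit coupling; it invokes Proposition~\ref{prop:graph_distance_to_high_degree} directly to get there in a bounded number $C_{\ref{prop:graph_distance_to_high_degree}}$ of edges, whose total weight is then bounded by a constant with high probability and absorbed into the additive error. The LWL coupling is reserved for the explosive case. Second, and more importantly, the one-edge-per-step climb you describe first does not work as stated in PA: a vertex of high degree has many \emph{young, low-degree} neighbours but need not be directly adjacent to anything of even higher degree. The paper's mechanism is precisely your parenthetical alternative (``at-most-one-hop reachable''), made fully explicit: each greedy step is a \emph{cherry} of two edges through an $\alpha$-connector --- a vertex $y$ that arrived after time $\alpha t$ and connects both to $\pi_{k-1}\in\mathcal{L}_{k-1}$ and to some $\pi_k\in\mathcal{L}_k$. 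Conditionally on $\mathrm{PA}_{\alpha t}$, these $y$'s attach independently, so the number of available connectors dominates a binomial with mean $\gtrsim s_{k-1}^{\varepsilon_{k-1}/2}$ (Lemma~\ref{lemma:error_prob_tk}), and the minimum of $L_1+L_2$ over this many cherries gives the summand of $Q_t$ after relating $F_{L_1+L_2}^{(-1)}$ to $F_L^{(-1)}$ and a variable change. The sprinkling is therefore not merely a refinement for tightness but the engine that makes the whole climb, and the independence needed for your ``concentration of the minimum'' step, work at all.
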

\begin{proposition}[Upper bound on the weighted distance, explosive case]\label{prop:upperbound_explosive}
  Consider PA under the same conditions as Theorem \ref{thm:weighted_distance_explosive}.
    Recall $I(L) < \infty$.
Then, there is a coupled probability space, such that for
  every $\delta,\varepsilon > 0$ there exists a constant $N\in\mathbb{N}$ such that for $t$ sufficiently large
  \begin{equation}
    \prob\left(d_L^{(t)}(u,v)\leq \beta^{\mathrm{LWL}^{(u)}}_{N} + \beta^{\mathrm{LWL}^{(v)}}_{N} + \varepsilon\right)
    \geq 1-\delta\label{eq:weighted_distance_explosive_upperbound},
  \end{equation}
  where $\mathrm{LWL}^{(u)}, \mathrm{LWL}^{(v)}$ are the LWL trees coupled to the neighbourhood of $u,v$, respectively.
\end{proposition}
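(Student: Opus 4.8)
The plan is to construct, on the coupling between $\mathrm{PA}_t$ and the local weak limit supplied by Proposition~\ref{prop:local_weak_limit}, a path from $u$ to $v$ in $\mathrm{PA}_t$ of $L$-length at most $\beta^{\mathrm{LWL}^{(u)}}_N+\beta^{\mathrm{LWL}^{(v)}}_N+\varepsilon$, in four stages. First, couple the graph-neighbourhoods of $u$ and of $v$ with two independent copies $\mathrm{LWL}^{(u)},\mathrm{LWL}^{(v)}$ of the LWL (extending the coupling of Proposition~\ref{prop:local_weak_limit} so that it also carries the i.i.d.\ edge-weights). Second, inside each of these trees locate a vertex of very large degree that is reachable from the root at $L$-cost at most $\beta^{\mathrm{LWL}}_N$. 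Third, transport these two high-degree vertices into $\mathrm{PA}_t$ through the coupling. Fourth, join them inside $\mathrm{PA}_t$ by a path of $L$-length at most $\varepsilon$, re-using the layered path-construction (the ``sprinkling'' argument) from the proof of Proposition~\ref{prop:upper_bound}, but started from a high-degree vertex rather than a typical one, so that --- because $I(L)<\infty$ --- its total weight is only the tail of a convergent series.

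Fix $\varepsilon,\delta>0$. Since $I(L)<\infty$, the series $\sum_{k\ge1}F_L^{(-1)}(\exp(-(\tau-2)^{-k/2}))$ converges: it is comparable to $I(L)$ up to a factor depending only on $\tau$, because re-indexing a monotone nonnegative sequence along an arithmetic progression changes its sum only by a bounded factor. Hence its tail from index $k_0$ tends to $0$, and I fix $D=D(\varepsilon)\in\mathbb{N}$ so large that this tail, started from the level $k_0(D)$ at which the running degree in the construction of Proposition~\ref{prop:upper_bound} first exceeds $D$, is at most $\varepsilon/8$. For the LWL step I would use Theorem~\ref{theorem:explosion_time_ppg}, which gives $\beta^{\mathrm{LWL}}_\infty=\lim_k\beta^{\mathrm{LWL}}_k<\infty$ almost surely: any path in the LWL that avoids every vertex of degree $\ge D$ lies in a subtree of degree at most $D-1$, on which the least $L$-weight of a path of graph-length $N$ grows linearly in $N$ (a branching-random-walk lower bound: a bounded-branching tree with i.i.d.\ non-degenerate weights has positive minimal speed when $\prob(L=0)$ is small relative to $1/D$; the case $\prob(L=0)>0$ needs a minor separate argument). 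Combining this with a first-moment estimate ($(D-1)^N\prob(S_N\le w)\to0$ for fixed $w$, where $S_N$ is a sum of $N$ i.i.d.\ copies of $L$) and $\beta^{\mathrm{LWL}}_\infty<\infty$ a.s., one gets: for $N=N(D,\delta)$ large, with probability at least $1-\delta/8$ the geodesic realising $\beta^{\mathrm{LWL}}_N$ passes through a vertex $h$ of degree at least $D$ at generation at most $N$; since the tree-path $\circledcirc\to h$ is an initial segment of this geodesic, $d_L^{(\mathrm{LWL})}(\circledcirc,h)\le\beta^{\mathrm{LWL}}_N$. Apply this to $\mathrm{LWL}^{(u)}$ and $\mathrm{LWL}^{(v)}$ to obtain $h_u$ and $h_v$.

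Next I would invoke Proposition~\ref{prop:local_weak_limit} with $\delta_{\ref{prop:local_weak_limit}}=\delta/8$. For $t$ large, $\kappa_{\delta/8}(t)\ge N+1$; the neighbourhoods $\widetilde{\mathcal{B}}^{(t)}_G(u,\kappa_{\delta/8}(t))$ and $\widetilde{\mathcal{B}}^{(t)}_G(v,\kappa_{\delta/8}(t))$ have size $o(t)$ whp and, $u$ and $v$ being independent and uniform, are vertex-disjoint whp, so they may be coupled jointly --- together with their edge-weights --- to $\mathrm{LWL}^{(u)}$ and $\mathrm{LWL}^{(v)}$, with both balls rooted isomorphic as weighted graphs on an event of probability at least $1-\delta/4$. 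On this event write $\hat h_u,\hat h_v$ for the $\mathrm{PA}_t$-images of $h_u,h_v$. As $h_u$ lies at generation at most $N<\kappa_{\delta/8}(t)$, all of its LWL-neighbours lie at generation at most $N+1\le\kappa_{\delta/8}(t)$, so the isomorphism identifies every edge incident to $\hat h_u$; hence $\deg_{\mathrm{PA}_t}(\hat h_u)=\deg_{\mathrm{LWL}^{(u)}}(h_u)\ge D$, and likewise $\deg_{\mathrm{PA}_t}(\hat h_v)\ge D$. Moreover the image of the geodesic segment from $\circledcirc$ to $h_u$ is a $u$-to-$\hat h_u$ path in $\mathrm{PA}_t$ of the same $L$-length, so $d_L^{(t)}(u,\hat h_u)\le\beta^{\mathrm{LWL}^{(u)}}_N$, and symmetrically $d_L^{(t)}(v,\hat h_v)\le\beta^{\mathrm{LWL}^{(v)}}_N$.

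Finally, with $\hat h_u,\hat h_v$ both of degree at least $D$, I would connect them in $\mathrm{PA}_t$ by a path of $L$-length at most $\varepsilon$. Running the layered construction of Proposition~\ref{prop:upper_bound} started at $\hat h_u$: at each level one advances by two edges --- through a young intermediate vertex to a vertex of much larger degree (the degree roughly raised to the power $1/(\tau-2)$ per level) --- and selects the cheapest among the polynomially many such two-edge connections, of combined weight at most a constant times $F_L^{(-1)}(\exp(-(\tau-2)^{-k/2}))$; after $O(\log\log t)$ levels this reaches the dense inner core of $\mathrm{PA}_t$, and the total $L$-weight along the way is at most the tail $\sum_{k\ge k_0(D)}(\mathrm{const})\,F_L^{(-1)}(\exp(-(\tau-2)^{-k/2}))\le\varepsilon/4$ by the choice of $D$; the same from $\hat h_v$. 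Inside the inner core the two endpoints are joined, whp, by a path of $L$-length at most $\varepsilon/4$, by selecting the least-weight among polynomially (in $t$) many vertex-disjoint two-edge connections through common neighbours: its weight is $\le\varepsilon/4$ since $\prob(L_1+L_2\le\varepsilon/8)>0$ (as $I(L)<\infty$ forces the support of $L$ to reach $0$). Independence of the successively revealed edge-weights is arranged by the staged-revelation (sprinkling) device already used in Proposition~\ref{prop:upper_bound}. Intersecting the $O(1)$ favourable events, each of probability at least $1-\delta/8$ for $t$ large, gives $d_L^{(t)}(u,v)\le d_L^{(t)}(u,\hat h_u)+d_L^{(t)}(\hat h_u,\hat h_v)+d_L^{(t)}(\hat h_v,v)\le\beta^{\mathrm{LWL}^{(u)}}_N+\beta^{\mathrm{LWL}^{(v)}}_N+\varepsilon$ with probability at least $1-\delta$, which is \eqref{eq:weighted_distance_explosive_upperbound}. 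The main obstacle is this last stage: re-using the sprinkling construction while simultaneously (a) exploiting $I(L)<\infty$ to make the truncated path have \emph{small}, not merely finite, weight --- this is where the convergence of the $Q$-series and the choice of $D$ enter --- and (b) handling the dependence between the edge-weights revealed in the two coupled neighbourhoods, in the two ascending paths, and in the inner core. A secondary difficulty is the LWL step, i.e.\ showing that the $\beta^{\mathrm{LWL}}_N$-geodesic really picks up an arbitrarily high-degree vertex once $N$ is large, which is the quantitative heart of the explosiveness of the LWL.
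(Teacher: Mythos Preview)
Your four-stage strategy is the same as the paper's, and your LWL step --- arguing that bounded-degree subtrees cannot explode, hence the $\beta_N$-geodesic must meet a high-degree vertex --- is essentially the content of the paper's Lemma~\ref{lemma:no_explosion_bounded_degrees}, just packaged differently.

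There is, however, a genuine gap in how you interleave the coupling with the sprinkling. You perform the LWL coupling at time $t$, so your $\hat h_u$ is guaranteed $\deg_{\mathrm{PA}_t}(\hat h_u)\ge D$. But the layered construction you then invoke (Proposition~\ref{prop:weighted_distance_in_t}) requires its starting vertex to lie in $[\alpha t]$ with $D_{\alpha t}(\cdot)\ge s_0$, because the path is built from $\alpha$-connectors arriving in $(\alpha t,t]$ and must be independent of everything determined by $\mathrm{PA}_{\alpha t}$. With your timing, $\hat h_u$ may have label exceeding $\alpha t$, or have small degree at time $\alpha t$; and since $\hat h_u$ was selected using the full weighted graph $\mathrm{PA}_t$, the $\alpha$-connectors are no longer independent of its location. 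Saying the dependence is ``arranged by the staged-revelation device already used in Proposition~\ref{prop:upper_bound}'' does not resolve this: that device works precisely because the first stage there uses only $\mathrm{PA}_{\alpha t}$.

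The paper's fix is to reverse the order. First condition on $u,v\in[\alpha t]$ (probability $\ge 1-\delta/8$ for $\alpha$ close to $1$), so that $u,v$ become uniform in $[\alpha t]$; then carry out the LWL coupling and the search for the high-degree vertices $u',v'$ entirely inside $\mathrm{PA}_{\alpha t}$, guaranteeing $D_{\alpha t}(u'),D_{\alpha t}(v')\ge s_0$. One then writes
\[
d_L^{(t)}(u,v)\le d_L^{(\alpha t)}(u,u')+d_L^{(\alpha t)}(v,v')+d_L^{(t)}(u',v'),
\]
bounds the first two terms by $\beta_N^{\mathrm{LWL}^{(u)}},\beta_N^{\mathrm{LWL}^{(v)}}$ via the coupling at time $\alpha t$, and bounds the last term by $\varepsilon$ using Propositions~\ref{prop:weighted_distance_in_t} and~\ref{prop:innercore_bounded}, which now legitimately use only edges arriving after $\alpha t$ and are therefore independent of the choice of $u',v'$. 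Once you make this temporal split explicit, the rest of your outline goes through.
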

\noindent
\begin{figure}
\includegraphics[width=0.9\textwidth]{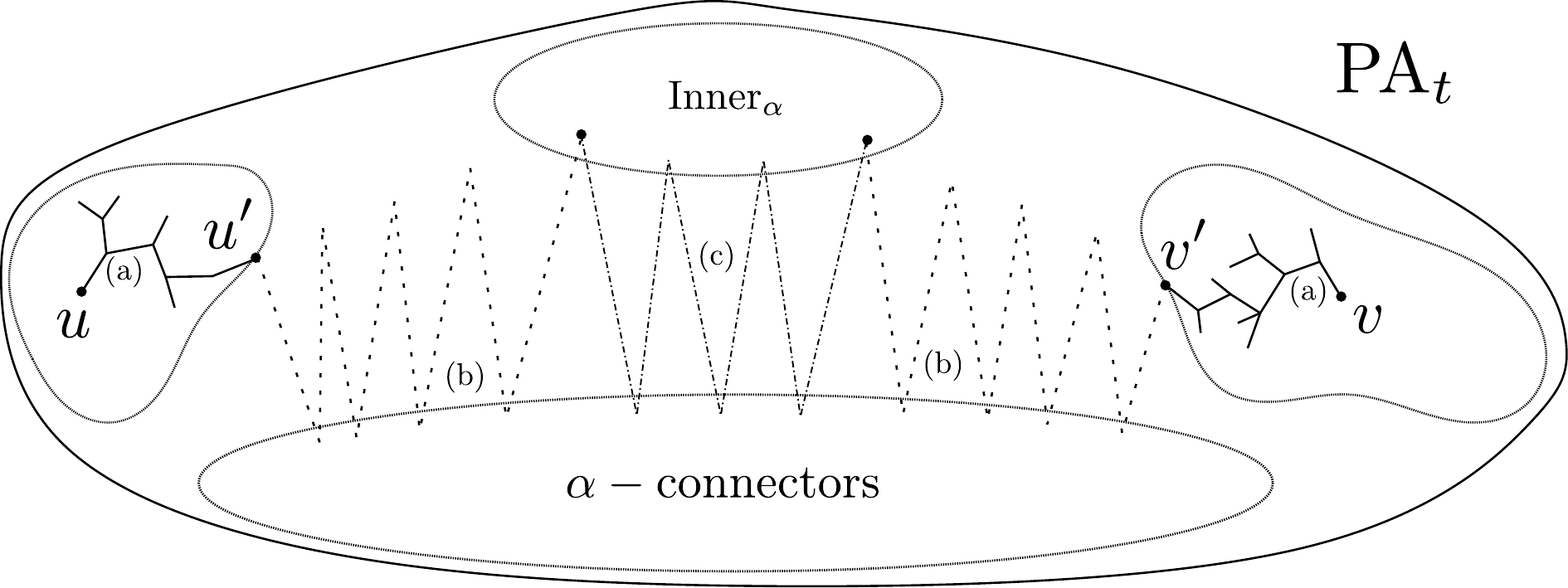}
\caption{The constructed five-segment path from $u$ to $v$, via vertices with sufficiently high degree $u'$ and $v'$ and $\mathrm{Inner}_\alpha$.}
\label{fig:constructed-path}
\end{figure}
Throughout this section, we look at the graph at times $t$ and $t':=\alpha t$,
for some $\alpha\in[\frac{1}{2}, 1)$. For the upper bound on $d_L^{(t)}(u,v)$ it is enough to construct a path between $u$ and $v$,
and study its weight.
The path that we construct, consists of five segments of three different types.
First, for $q\in\{u,v\}$, we construct  a path, consisting of one segment of type (a) and one of type (b), to
\begin{equation}
\mathrm{Inner}_{\alpha}:=\{w\in[\alpha t]:D_{\alpha t}(w)\geq {(\alpha t)}^{\frac{1}{2(\tau-1)}}\log(\alpha t)^{-\frac{1}{2}}\},\label{eq:innercore}
\end{equation}
i.e., vertices with a very large degree, also called inner core, see Figure \ref{fig:constructed-path}. The total weight of one path contributes almost
half of the total weight of the entire path from $u$ to $v$.
The segment of type (a) connects $q\in\{u,v\}$ to a vertex $q'$ that has degree at least $s_0\in\mathbb{N}$. It only passes  through vertices that arrived before time $\alpha t$, \emph{old} vertices.
To do so, we shall condition on $q<\alpha t$, which happens w/p close to one if $\alpha$ is close to one.
The segment of type (b) connects $q'$ to the inner core and alternatingly passes through old vertices and \emph{$\alpha$-connectors}, vertices that arrived after time $\alpha t$. Similarly for segment type (c), see Figure \ref{fig:constructed-path}, we construct a path with negligible total weight
that connects two vertices in the inner core, by alternatingly using $\alpha$-connectors and other vertices in the inner core.
By construction, all edges on segments of type (a) arrived \emph{before} time $\alpha t$, while on types (b) and (c) all edges arrived \emph{after} $\alpha t$.

We present now three segment-specific propositions, used in both the explosive and conservative case.
Afterwards we introduce some necessary notation to construct the path.
Lastly, we show how these propositions together prove Propositions \ref{prop:upper_bound} and \ref{prop:upperbound_explosive}.
Starting with segment (a), we show that the number of edges on the path between $q$ and $q'$ is bounded, for $q\in\{u,v\}$. In the conservative case, its total weight is negligible compared to $Q_t$.
    In the explosive case, this part is the main contributor and later we show that its total weight tends in distribution to the (finite) explosion time of the LWL.
\begin{proposition}[Bounded graph distance to a vertex with degree at least $s$]\label{prop:graph_distance_to_high_degree}
  Consider PA with power-law exponent $\tau\in(2, 3)$, i.i.d.\ weights on the edges with distribution $F_L$
  and fix $\delta_{\ref{prop:graph_distance_to_high_degree}}>0$.
  Let $q$ be chosen uniformly at random from $[t]$.
  For any $s_{\ref{prop:graph_distance_to_high_degree}}\in\mathbb{N}$, there is a constant
  $C_{\ref{prop:graph_distance_to_high_degree}}=C_{\ref{prop:graph_distance_to_high_degree}}(s_{\ref{prop:graph_distance_to_high_degree}}, \delta_{\ref{prop:graph_distance_to_high_degree}})$
  such that for $t$ sufficiently large
  \begin{equation}
    \prob\bigg(
      \bigcap_{q'\in[t]: D_{t}(q')\geq s_{\ref{prop:graph_distance_to_high_degree}}}\left\{d_G^{(t)}(q, q')
      \geq
      C_{\ref{prop:graph_distance_to_high_degree}}\right\}
    \bigg)
    \leq
    \delta_{\ref{prop:graph_distance_to_high_degree}}.\nonumber
  \end{equation}
  We denote the complement of the above event between brackets by
  $\mathcal{E}_{\ref{prop:graph_distance_to_high_degree}}^{(t)}(q, s_{\ref{prop:graph_distance_to_high_degree}})$.
\end{proposition}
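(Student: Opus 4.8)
The plan is to transfer the statement to the local weak limit via Proposition~\ref{prop:local_weak_limit} and, on the LWL, to use that it contains vertices of arbitrarily large degree at bounded graph distance from the root. Write $s:=s_{\ref{prop:graph_distance_to_high_degree}}$ and $\delta:=\delta_{\ref{prop:graph_distance_to_high_degree}}$ throughout.

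First I would prove the LWL analogue: there is a constant $C=C(s,\delta)$ with
\[
  \prob\Big(\exists\, w\in\mathrm{LWL}:\ d_G^{(\mathrm{LWL})}(\circledcirc,w)\le C,\ \deg_{\mathrm{LWL}}(w)\ge s\Big)\ \ge\ 1-\delta/2 .
\]
Put $\Delta_k:=\max\{\deg_{\mathrm{LWL}}(w):d_G^{(\mathrm{LWL})}(\circledcirc,w)\le k\}$, which is non-decreasing in $k$; it suffices to show $\Delta_k\to\infty$ almost surely, since then $\prob(\Delta_k\ge s)\to1$ and one fixes $C$ accordingly. For this I would invoke the explicit construction of the P\'olya-point graph and of the idealized neighbourhood tree as multi-type branching trees \cite{berger2014asymptotic,dereich2013random,garavaglia2019preferential}: each vertex carries an age/position parameter, its degree is almost surely finite but has conditional mean diverging as the parameter tends to $0$, and along the chain of ``oldest'' offspring the parameter shrinks by a random factor bounded away from $1$ — in fact essentially raised to a power $1/\chi>1$, the same mechanism that produces the $\log\log$ order of graph distances. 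Hence the parameters along this chain tend to $0$, the corresponding degrees diverge, and $\Delta_k\to\infty$ a.s. (Alternatively one could observe that by local weak convergence the root degree has full support on $\{m,m+1,\dots\}$ and then upgrade ``the LWL contains no vertex of degree $\ge s$'' to a null event using unimodularity together with ergodicity of the LWL; the construction-based route is more self-contained.)

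Granting the LWL claim, fix that $C$ and apply Proposition~\ref{prop:local_weak_limit} with $\delta_{\ref{prop:local_weak_limit}}:=\delta/2$. Since $\kappa_{\delta_{\ref{prop:local_weak_limit}}}(t)\to\infty$, for $t$ large we have $\kappa_{\delta_{\ref{prop:local_weak_limit}}}(t)\ge C+1$, and on the coupling event (of probability $\ge 1-\delta/2$) the rooted isomorphism $\widetilde{\mathcal{B}}^{(t)}_G(q,C+1)\simeq\mathrm{LWL}_{C+1}$ maps a vertex $w$ as in the claim to a vertex $q'\in\mathrm{PA}_t$ with $d_G^{(t)}(q,q')=d_G^{(\mathrm{LWL})}(\circledcirc,w)\le C$. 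Since every LWL-neighbour of $w$ lies at distance $\le C+1$ from $\circledcirc$, hence inside $\mathrm{LWL}_{C+1}$, these neighbours are mapped to $\ge s$ distinct $\mathrm{PA}_t$-neighbours of $q'$, so $D_t(q')\ge s$. Taking $C_{\ref{prop:graph_distance_to_high_degree}}:=C$ and summing the two error probabilities $\delta/2$ gives the proposition; once the LWL claim is available these steps are routine bookkeeping.

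The main obstacle is the almost-sure divergence $\Delta_k\to\infty$ on the LWL. It is intuitively forced — the local weak limit of an infinite-variance power-law graph contains vertices of unboundedly large degree and, being connected, these sit at finite distance from the root — but making it \emph{almost sure} needs genuine structural input, either the branching-tree description of the $\mathrm{PPG}$ and $\mathrm{INT}$ or an ergodicity argument. I note that a direct proof inside $\mathrm{PA}_t$ would rest on the one-step estimate that an out-edge of a not-too-young vertex lands on a vertex of current degree $\ge s$ with probability at least some $c(s)>0$ — valid because, by the empirical degree law, the degree-$\ge s$ vertices carry a $\Theta(s^{-(\tau-2)})$-fraction of the total degree — iterated $O(\log(1/\delta)/c(s))$ times, with the boundary cases ``$q$ is among the oldest $O(1)$ vertices'' and ``the explored path reaches them'' handled by the fact that each of finitely many oldest vertices has degree tending to infinity. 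The difficulty is that in the plain sequential construction the out-edges of old vertices are already frozen by the time $q$ arrives, so such an iteration carries genuine fresh randomness only in the P\'olya-urn representation — which is precisely the finite-$t$ shadow of the branching process above, so it is cleaner to route through the LWL.
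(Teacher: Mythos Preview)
Your approach is correct but genuinely different from the paper's. The paper does not give a self-contained argument: it simply cites \cite[Theorem~3.6]{dommers2010diameters} for FPA and \cite[Proposition~5.10]{monch2013distances} for GVPA, both of which are direct finite-graph arguments. In that route one follows the $m$ out-edges of $q$ (or, for GVPA, the edge to its oldest neighbour) to an older vertex, and shows that each such step lands on a vertex of current degree $\ge s$ with probability bounded below by some $c(s)>0$, because high-degree vertices carry a positive fraction of the total degree; iterating a bounded number of times yields the claim. Your concern about ``frozen'' out-edges is handled there via the P\'olya-urn/exchangeability representation, which restores the needed conditional independence. Your route through Proposition~\ref{prop:local_weak_limit} is more conceptual and modular --- it isolates the purely local statement $\Delta_k\to\infty$ a.s.\ on the LWL and then transports it by coupling --- and the coupling step (in particular the observation that a vertex at distance $\le C$ has all its LWL-neighbours inside $\mathrm{LWL}_{C+1}$, so its degree is preserved under the isomorphism) is carried out carefully. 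The cost is that the a.s.\ divergence of $\Delta_k$ still requires the explicit branching-process description of the PPG/INT from \cite{berger2014asymptotic,dereich2013random,garavaglia2019preferential}, so in the end both approaches lean on structural input of comparable depth; the paper's citation simply outsources that input to the finite-graph literature rather than to the LWL literature.
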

\noindent
The proof for FPA follows from a minor adaptation of the proof of \cite[Theorem 3.6]{dommers2010diameters}.
For GVPA it follows from an adaptation of \cite[Proposition 5.10]{monch2013distances}.
We refer the reader to the cited paper and thesis to fill in the details.
We emphasize that we apply Proposition \ref{prop:graph_distance_to_high_degree} at time $\alpha t$, rather than $t$.

From the vertex $q'$ with degree at least $s_0$ at time $\alpha t$, we construct a path to the inner core, corresponding to segment  (b). We show that there are many such paths, allowing to bound the weight.
The next proposition is the main (technical) contribution of the paper.
Due to this statement, we obtain tight bounds for the various distances in FPA, improving upon existing results \cite{caravenna2016diameter,dereich2012typical}.
Its proof can easily be adapted to obtain tight fluctuations on the typical weighted distance in the configuration model if condition \eqref{eq:tightness_cond} is satisfied, improving results in \cite{adriaans2017weighted}.
\begin{proposition}[{Upper bound on the weighted distance to $\mathrm{Inner}_{\alpha}$}]\label{prop:weighted_distance_in_t}
  Consider PA with power-law exponent $\tau\in(2, 3)$, i.i.d.\ weights on the edges with distribution $F_L$.
  Fix $\delta_{\ref{prop:weighted_distance_in_t}},\varepsilon_{\ref{prop:weighted_distance_in_t}}>0$, $\alpha\in[1/2,1)$.
  There exists $s_0=s_0(\delta_{\ref{prop:weighted_distance_in_t}})\in\mathbb{N}$ and a constant $M_{\ref{prop:weighted_distance_in_t}}$, such that for $s>s_0$, and any $q'\in[\alpha t]$ with $D_{\alpha t}(q')= s$,
    if $t$ is sufficiently large, for FPA or VPA, if $F_L$ satisfies \eqref{eq:tightness_cond},
  \begin{equation}
    \prob\bigg(d_L^{(t)}(q', \mathrm{Inner}_{\alpha}) \geq
    M_{\ref{prop:weighted_distance_in_t}} +
    \sum_{k=\lfloor h_\tau(s)\rfloor}^{K^\ast_t + \lfloor h_\tau(s) \rfloor + 4}F_{L}^{(-1)}
        \left(\exp\left(-(\tau-2)^{-k/2}\right)
                  \right)
                  \bigg)
    \leq \delta_{\ref{prop:weighted_distance_in_t}},\label{eq:weighted_distance_in_t}
  \end{equation}
  where $h_\tau(s)=2\log\log(s)/|\log(\tau-2)|+c_{\tau}$ for some constant $c_{\tau}$.
  Without \eqref{eq:tightness_cond}, it holds that
  \begin{equation}
    \prob\bigg(d_L^{(t)}(q', \mathrm{Inner}_{\alpha}) \geq
    (1+\varepsilon_{\ref{prop:weighted_distance_in_t}})
    \sum_{k=\lfloor h_\tau(s)\rfloor}^{K^\ast_t + \lfloor h_\tau(s) \rfloor + 4}F_{L}^{(-1)}
        \left(\exp\left(-(\tau-2)^{-k/2}\right)
                  \right)
                  \bigg)
    \leq \delta_{\ref{prop:weighted_distance_in_t}},\label{eq:weighted_distance_in_t_general},
  \end{equation}
where the constant $c_\tau$ in the function $h_\tau$ might be different and can depend on $\varepsilon_{\ref{prop:weighted_distance_in_t}}$.
\end{proposition}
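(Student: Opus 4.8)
The plan is to establish both \eqref{eq:weighted_distance_in_t} and its weaker companion \eqref{eq:weighted_distance_in_t_general} by explicitly \emph{constructing} a path from $q'$ into $\mathrm{Inner}_\alpha$ whose $L$-length is at most the stated right-hand side, and by running this construction inside the \emph{sprinkled} edges. Concretely, I would condition on $\mathrm{PA}_{\alpha t}$ — which fixes $q'$, the value $D_{\alpha t}(q')=s$, and the whole set of old vertices of each given degree — and then use the edges created in the window $(\alpha t,t]$, the $\alpha$-connectors, as a source of fresh i.i.d.\ randomness, independent of everything revealed so far (crucially, the weights of those edges are independent of the topology). From the attachment probabilities in \eqref{eq:pa_mdelta_connection} one checks that an old vertex of degree $d$ at time $\alpha t$ collects $\Theta(d)$ connector-edges by time $t$ (with a martingale/Chernoff argument to absorb the negative correlations of FPA; for VPA the conditional independence of edges makes this estimate immediate), and that each such connector has $m$ further out-edges whose other endpoints are size-biased towards high-degree old vertices. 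Thus at every ``level'' of the construction there will be a large family of short, weight-light detours with fresh independent weights, which is what allows the minimal traversal weight to be pushed down to the quantiles occurring in the sum.

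I would then fix a doubly-exponentially growing sequence of degree thresholds $(s_k)_{k\ge k_0}$ with $k_0:=\lfloor h_\tau(s)\rfloor$ and $s_{k_0}:=s$, calibrated so that $\log s_k$ grows like $(\tau-2)^{-k/2}$ until it first exceeds $\tfrac{1}{2(\tau-1)}\log(\alpha t)$ (so that $s_k$ exceeds the inner-core threshold in \eqref{eq:innercore}); for $s_0$ large this happens at an index in $[k_0,K^\ast_t+k_0+4]$, which is precisely the origin of the summation range and of the form of $h_\tau$. The heart of the proof is a one-step lemma: conditionally on everything constructed so far, if the current vertex $z$ satisfies $D_{\alpha t}(z)\ge s_k$, then with conditional probability at least $1-\delta_k$ there is a short detour (through one, or at most two, $\alpha$-connectors) from $z$ to a vertex $z'$ with $D_{\alpha t}(z')\ge s_{k+1}$ whose $L$-length is at most $F_L^{(-1)}\big(\exp(-(\tau-2)^{-k/2})\big)$, up to a per-step slack controlled below. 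Its proof combines (i) a lower bound of order $\exp((\tau-2)^{-k/2})$ on the number of admissible detours out of $z$ — obtained from the $\Theta(s_k)$ connectors of $z$, the power-law count $\Theta(n\,d^{-(\tau-1)})$ of old vertices of degree $\ge d$, and the size-biasing of connector out-edges, via a first- and second-moment estimate with concentration — with (ii) an order-statistics bound, namely that the minimum of that many fresh i.i.d.\ weights (or of that many sums of two i.i.d.\ weights, when each detour carries two edges) lies below the claimed level-$k$ quantile except with probability $\delta_k$.

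Iterating the one-step lemma from level $k_0$ until $\mathrm{Inner}_\alpha$ is reached, union-bounding over the $\le K^\ast_t+5$ steps with $\sum_k\delta_k\le\delta_{\ref{prop:weighted_distance_in_t}}$, produces the path, and summing the level contributions yields the main term $\sum_{k=\lfloor h_\tau(s)\rfloor}^{K^\ast_t+\lfloor h_\tau(s)\rfloor+4}F_L^{(-1)}(\exp(-(\tau-2)^{-k/2}))$. The difference between \eqref{eq:weighted_distance_in_t} and \eqref{eq:weighted_distance_in_t_general} lies entirely in how the per-step slack is absorbed. For the general bound one may afford a multiplicative loss $(1+\varepsilon_{\ref{prop:weighted_distance_in_t}})$ — spread either as a factor $(1+\varepsilon_{\ref{prop:weighted_distance_in_t}}/K^\ast_t)$ over the $K^\ast_t$ steps, or simply as an extra constant shift in $h_\tau$ — which requires no hypothesis on $F_L$ and goes through for all three PA variants. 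To obtain instead the \emph{additive} constant $M_{\ref{prop:weighted_distance_in_t}}$ for FPA and VPA, I would use a sharpened recursion for $(s_k)$ that tracks a correction term beyond the leading doubly-exponential, chosen so that the extra weight paid at level $k$ beyond $F_L^{(-1)}(\exp(-(\tau-2)^{-k/2}))$ is $\asymp\tfrac{1}{k}\big(F_L^{(-1)}(\exp(-(\tau-2)^{-k/2}))-\sup\{x:F_L(x)=0\}\big)$; the sum of these slacks over all levels coincides, up to re-indexing and a multiplicative constant, with the series in \eqref{eq:tightness_cond}, so finiteness of \eqref{eq:tightness_cond} is exactly the condition under which the cumulative slack is a finite constant $M_{\ref{prop:weighted_distance_in_t}}$.

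The step I expect to be the real obstacle is precisely this last calibration: choosing $(s_k)$ so that \emph{simultaneously} (a) at every level, uniformly down to level $k_0$ and hence uniformly in $s>s_0$, there remain of order $\exp((\tau-2)^{-k/2})$ admissible connector-detours (so that the minimal detour weight really is the claimed quantile), (b) $\mathrm{Inner}_\alpha$ is still reached within the range $[k_0,K^\ast_t+k_0+4]$, and (c) the cumulative slack over the $\Theta(\log\log t)$ levels stays $O(1)$ and matches \eqref{eq:tightness_cond} with the right constant. Reconciling (a) with (c) — more parallel detours demand faster degree growth, which threatens (b) and the precise summation range, while slower growth inflates the per-step weight — is where the ``improved recursion'' is genuinely needed. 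By contrast the supporting ingredients (concentration of connector counts under the FPA dependence, independence of the edge weights from the sprinkled topology, the power-law degree counts, and the order-statistics estimates) are comparatively routine.
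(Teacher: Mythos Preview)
Your proposal is correct and follows essentially the same route as the paper: the greedy path through degree layers via $\alpha$-connectors (Definition~\ref{def:layers_connectors_greedy}), the recursion $s_k=s_{k-1}^{(1-\varepsilon_k)/(\tau-2)}$ with $\varepsilon_k=(k+2)^{-2}$ in the tight case and $\varepsilon_k\equiv\varepsilon_G$ otherwise (Lemma~\ref{lemma:kt_kast}), concentration of connector counts (Lemma~\ref{lemma:error_prob_tk}), order statistics on the minimum of i.i.d.\ two-edge sums (Lemma~\ref{lemma:minimum_iid_rvs}), and a sum--integral--variable-change manipulation whose $O(1/y)$ Jacobian correction is precisely where condition~\eqref{eq:tightness_cond} enters. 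One small slip: the number of available detours at level $k$ is of order $\exp\big(c(\tau-2)^{-k/2}/k^{2}\big)$ rather than $\exp\big((\tau-2)^{-k/2}\big)$, and this polynomial loss \emph{is} the source of the $1/k$ slack you identify in (c), so your item (a) as stated is slightly too optimistic, but the argument closes exactly as you describe.
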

\noindent This part of the path is the main contributor to the upper bound in the conservative case. If $I(L)<\infty$, it follows that the value of the sum in \eqref{eq:weighted_distance_in_t} can be made arbitrarily small by increasing $s$, because $h_\tau(s)$ tends to infinity.
For the conservative case, comparing the sum in $Q_t$ in \eqref{eq:k_ast_qt} to the sum in \eqref{eq:weighted_distance_in_t}, one sees that they are identical up
to a shift of the summation boundaries.

In the next proposition we bound the graph \emph{and} weighted distance within the inner core,  segment  (c) in Figure \ref{fig:constructed-path}.
\begin{proposition}[Inner core has negligible weighted distance]\label{prop:innercore_bounded}
  Consider PA with power-law exponent $\tau\in(2, 3)$, i.i.d.\ weights on the edges with distribution $F_L$.
  Recall  $\mathrm{Inner}_{\alpha}$ from \eqref{eq:innercore} and fix $\delta_{\ref{prop:innercore_bounded}}>0, \alpha\in[1/2, 1)$.
  Then there exists $C_{\ref{prop:innercore_bounded}}>0$, such that for all  sufficiently large $t$, and for any two fixed vertices $w_1, w_2$  in $\mathrm{Inner}_{\alpha}$,
  \begin{equation}
    \prob\left(d_G^{(t)}(w_1, w_2)\geq C_{\ref{prop:innercore_bounded}} \right)
    \leq
    \delta_{\ref{prop:innercore_bounded}}.\label{eq:bounded_distance_inner_graph}
  \end{equation}
   Moreover, if $F_L$ satisfies $F_L(x)>0$ for all $x>0$\footnote{This constraint holds whenever $I(L)<\infty$. However, it might not hold when $I(L)=\infty$.},
  then for any $\varepsilon_{\ref{prop:innercore_bounded}}>0$, if $t$ is sufficiently large,
  \begin{equation}
    \prob\left(d_L^{(t)}(w_1, w_2)\geq \varepsilon_{\ref{prop:innercore_bounded}} \right)
    \leq
    \delta_{\ref{prop:innercore_bounded}}.\label{eq:bounded_distance_inner}
  \end{equation}

\end{proposition}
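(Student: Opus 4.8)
The plan is to exploit the fact that the inner core $\mathrm{Inner}_{\alpha}$ consists of a few very-high-degree vertices together with a huge population of young vertices ($\alpha$-connectors) arriving in $(\alpha t, t]$, each of which attaches to the core essentially independently with non-trivial probability. First I would establish the graph-distance bound \eqref{eq:bounded_distance_inner_graph}. Fix $w_1, w_2 \in \mathrm{Inner}_{\alpha}$, so $D_{\alpha t}(w_i) \geq (\alpha t)^{1/(2(\tau-1))}\log(\alpha t)^{-1/2}$. Consider the vertices arriving in a window like $(\alpha t, (\alpha+\alpha')t]$ for a small fixed $\alpha'>0$; a vertex $s$ in this window connects to $w_i$ with probability proportional to $f(D^{\leftarrow}_s(w_i))/s \gtrsim D_{\alpha t}(w_i)/t \asymp (\alpha t)^{1/(2(\tau-1)) - 1}$ (and analogously for FPA via \eqref{eq:pa_mdelta_connection}), using monotonicity of the indegree in time. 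Since there are $\Theta(t)$ such vertices, the expected number connecting to $w_i$ is $\gtrsim t^{1/(2(\tau-1))}$, which diverges; a second-moment / Chernoff estimate (the edges from distinct $\alpha$-connectors to $w_i$ are independent in VPA/GVPA, and negatively correlated in a controllable way in FPA) shows that whp there are many $\alpha$-connectors attached to \emph{both} $w_1$ and $w_2$ — in fact even a single common connector gives $d_G^{(t)}(w_1,w_2) \leq 2$. To make the constant $C_{\ref{prop:innercore_bounded}}$ uniform over \emph{all} pairs in the core (the statement fixes the pair, but the path construction will need it for the pair that actually appears), one notes there are at most $O(t^{1-1/(\tau-1)+o(1)})$ core vertices and union-bounds. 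Actually for the stated proposition fixing $w_1,w_2$ suffices and $C_{\ref{prop:innercore_bounded}} = 2$ works, with the failure probability bounded by the (super-polynomially small in $t$, hence $\leq \delta_{\ref{prop:innercore_bounded}}$ for large $t$) probability that no common $\alpha$-connector exists.

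For the weighted bound \eqref{eq:bounded_distance_inner}, the idea is that having \emph{many} near-common connectors lets us pick a path whose two (or $C_{\ref{prop:innercore_bounded}}$) edge-weights are atypically small. Condition on the event from the graph-distance step but quantify it: whp there are at least $N_t \to \infty$ $\alpha$-connectors each adjacent to both $w_1$ and $w_2$ (or, more robustly, $N_t$ internally vertex-disjoint paths of length $\leq C_{\ref{prop:innercore_bounded}}$ through distinct connectors). Each such path has $L$-length equal to a sum of $C_{\ref{prop:innercore_bounded}}$ i.i.d.\ copies of $L$, and these $N_t$ sums are independent. The minimum of $N_t$ i.i.d.\ copies of $\sum_{j=1}^{C_{\ref{prop:innercore_bounded}}} L_j$ converges to $0$ in probability as $N_t\to\infty$ precisely because $F_L(x) > 0$ for every $x>0$: indeed $\prob(\sum_{j} L_j \leq \varepsilon) \geq F_L(\varepsilon/C_{\ref{prop:innercore_bounded}})^{C_{\ref{prop:innercore_bounded}}} =: p_\varepsilon > 0$, so $\prob(\min > \varepsilon_{\ref{prop:innercore_bounded}}) \leq (1-p_{\varepsilon_{\ref{prop:innercore_bounded}}})^{N_t} \to 0$. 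Choosing $t$ large enough that $N_t$ is large enough makes this at most $\delta_{\ref{prop:innercore_bounded}}/2$, and combining with the $\delta_{\ref{prop:innercore_bounded}}/2$ from the structural step gives \eqref{eq:bounded_distance_inner}.

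The main obstacle is the degree-growth and independence bookkeeping in the first step: one must show that a positive-polynomial-in-$t$ number of fresh vertices attach to a fixed core vertex, uniformly controlling the normalizing constant $Z_{t,j}$ (for FPA) or the denominator $s$ (for GVPA), and handling the slight degree dilution as $\alpha$-connectors themselves accumulate indegree. For FPA the conditional negative correlation between the events $\{t\overset{j}{\to}w_1\}$ and $\{t\overset{j}{\to}w_2\}$ must be dealt with, but negative association only helps the concentration of the count of common connectors from below, so a one-sided Chebyshev/Paley–Zygmund argument goes through. For GVPA one must also check that $D^{\leftarrow}_s(w_i)$ stays comparable to $D_{\alpha t}(w_i)$ throughout the window, which follows since indegrees are non-decreasing and the window is short. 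A subtlety worth flagging: \eqref{eq:bounded_distance_inner} requires $F_L(x)>0$ for all $x>0$ but \emph{not} $I(L)<\infty$, so the argument must not secretly use explosiveness — and indeed it does not, since we only need the minimum of finitely-many-term i.i.d.\ sums over a growing independent family to vanish, which is exactly the stated hypothesis.
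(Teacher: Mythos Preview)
Your argument contains a genuine quantitative error at the key step. You correctly note that a fresh vertex $s\in(\alpha t,t]$ connects to a fixed core vertex $w_i$ with probability of order $D_{\alpha t}(w_i)/t \asymp t^{1/(2(\tau-1))-1}$, so the expected number connecting to $w_i$ is $\asymp t^{1/(2(\tau-1))}\to\infty$. But you then claim this gives many $\alpha$-connectors attached to \emph{both} $w_1$ and $w_2$. The probability that $s$ attaches to both is (by conditional independence in GVPA/VPA, or by Lemma~\ref{lemma:t_connector_probability}) of order $\big(t^{1/(2(\tau-1))-1}\big)^2 = t^{1/(\tau-1)-2}$, so the expected number of common connectors is of order
\[
t\cdot t^{1/(\tau-1)-2}=t^{1/(\tau-1)-1}\longrightarrow 0,
\]
since $\tau>2$ forces $1/(\tau-1)<1$. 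Hence whp there is \emph{no} common connector, and your claimed bound $C_{\ref{prop:innercore_bounded}}=2$ fails. This also collapses your weighted argument, which rests on having $N_t\to\infty$ vertex-disjoint length-$2$ paths.

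The paper's proof avoids this by not trying to connect $w_1$ and $w_2$ directly: it takes the first $n_t=\lfloor\sqrt t\rfloor$ vertices of the inner core and builds an auxiliary graph $H_t$ where two core vertices are joined if they share an $\alpha$-connector. Then $H_t$ stochastically dominates an Erd\H{o}s--R\'enyi graph $G(n_t,p_t)$ with $p_t\asymp t^{1/(\tau-1)-1}/\log^2 t$, and the point is that $n_t p_t\asymp t^{1/(\tau-1)-1/2}/\log^2 t\to\infty$ (since $\tau<3$ gives $1/(\tau-1)>1/2$). So $H_t$ is dense enough to have bounded diameter $\Delta$, yielding \eqref{eq:bounded_distance_inner_graph} with $C_{\ref{prop:innercore_bounded}}=2\Delta$ (each $H_t$-edge is two $\mathrm{PA}_t$-edges). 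For the weighted bound, the dense ER graph contains a diverging number of vertex-disjoint paths of length $\Delta$ between any two of its vertices; minimising the $2\Delta$-edge weight over these independent paths then gives \eqref{eq:bounded_distance_inner} exactly as in your second paragraph. If $w_i$ is not among the first $n_t$ core vertices, an extra hop via a connector to this subset is handled separately. The essential missing idea in your attempt is using the \emph{whole} core as relay stations rather than a single cherry.
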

\noindent The proof of this proposition is deferred to the appendix on page \pageref{proof:innercore_bounded}.
It makes partly use of the same concepts as the proof of Proposition \ref{prop:weighted_distance_in_t},
combined with a coupling argument to a dense Erd\H{o}s-R\'enyi (ER) graph. In this ER-graph we show that there are many disjoint paths connecting
$w_1$ and $w_2$, allowing to bound $d_L^{(t)}(w_1, w_2)$.

Before outlining the proof of Proposition \ref{prop:weighted_distance_in_t},
we state the formal definitions of notions that are of particular importance throughout the proof.
\begin{definition}[Layers, $\alpha$-connectors, and the greedy path]\label{def:layers_connectors_greedy}
  Fix $\alpha\in[1/2, 1)$. Let
  \begin{align}
    s_{k} &:= \min\left\{s_{k-1}^{(1-\varepsilon_{k-1})\left(\tau-2\right)^{-1}}, \,(\alpha t)^{\frac{1}{2(\tau-1)}}\log(\alpha t)^{-\frac{1}{2}}\right\},\qquad k\in \mathbb{N},\label{eq:sk}\\
    K_t &:= \min  \left\{k: s_k\geq (\alpha t)^{\frac{1}{2(\tau-1)}}\log(\alpha t)^{-\frac{1}{2}}\right\}, \label{eq:kt}
  \end{align}
  where $s_0>1$ and $(\varepsilon_k)_{k\geq 0}$ is a sequence that tends to 0 for FPA and VPA under condition \eqref{eq:tightness_cond}, and $\varepsilon_k\equiv \varepsilon_G$
  for some small constant $\varepsilon_G>0$ otherwise.
  For  $(s_k)_{k\geq 0}$, we define the \emph{$k$-th layer} as
  \begin{equation}
    \mathcal{L}_k:=\{x\in[\alpha t]: D_{\alpha t}(x) \geq s_k\}.\nonumber
  \end{equation}
  A vertex $y$ in $[t]\backslash[\alpha t]$ is called an \emph{$\alpha$-connector} of $(x,z)$ if it
  is connected both to $x$ and $z$.
  Let
  $
  \mathcal{A}_k(x):=\{(y,z)\in [t]\backslash[\alpha t]\times\mathcal{L}_k:
  x\leftrightarrow y\leftrightarrow z \}.
  $
  For a vertex $\pi_0:=q'\in\mathcal{L}_0$, we construct a greedy path
  $\pi^{\mathrm{gr}}=(\pi_0, y_{1}, \pi_{1}, y_{2}, \pi_{2},..., y_{K_t-1}, \pi_{K_t})$
  of length $2K_t$ by sequentially choosing
  \begin{equation}
  (y_k, \pi_k)=\argmin_{(y, z)\in\mathcal{A}_k(\pi_{k-1})}\left\{L_{(\pi_{k-1}, y)}+L_{(y, z)}\right\}\nonumber
  \end{equation}
  if it exists. If $\mathcal{A}_k(\pi_{k-1})=\varnothing$, we say that the construction of the greedy path fails at step $k$.
\end{definition}
\noindent Note that on the greedy path $\pi^{\mathrm{gr}}$, $\pi_k\in\mathcal{L}_k$ and $y_k$
is an $\alpha$-connector of $(\pi_k,\pi_{k+1})$.
\subsubsection*{Outline of the proof of Proposition \ref{prop:weighted_distance_in_t}}The idea of Proposition \ref{prop:weighted_distance_in_t} is to construct a greedy
path to the inner core and use its total weight as an upper bound for the actual shortest path.
We outline the proof for FPA and VPA under condition \eqref{eq:tightness_cond}. The other cases follow by similar steps.
\begin{enumerate}[label=(\Roman*),leftmargin=2\parindent]
  \item The greedy path consists of $K_t$ cherries, i.e., of $2K_t$ edges. Recall $K^\ast_t$ from \eqref{eq:k_ast_qt}. We show for a specific choice of $\varepsilon_k$
  that
  \begin{equation}
  2K_t \,\leq\, K^\ast_{t} + 4.
  \nonumber
  \end{equation}
  \item We show that the sizes of the sets $\mathcal{A}_k(\pi_{k-1})$ are bounded from below by a doubly exponentially growing sequence
$(n_k)_{k\geq 0}$, i.e., for $\delta>0$, there is an $s_0$ such that for large $t$
\begin{equation}
\prob\bigg(\bigcup_{k\in[K_t]}\left\{|\mathcal{A}_{k+1}(\pi_k)|\leq n_k\right\}\bigg)
\leq\delta. \label{eq:small_neighbourhoods_error_outline}
\end{equation}
As a result, the greedy path actually exists, w/p at least $1-\delta$.
To prove this, the choice of the exponent of $(s_k)_{k\geq0}$ in \eqref{eq:sk} is crucial,
and in particular the choice of $(\varepsilon_k)_{k\geq 0}$.
In \cite[Theorem 3.1]{dommers2010diameters} and \cite[Proposition 3.1]{dereich2012typical} similar constructions
of greedy paths are used. In those proofs, the exponent of $(s_k)_{k\geq0}$ is equal to $1/(\tau-2)$
and every term is \emph{corrected with a $\log(t)$-term} to ensure that every vertex in layer $\mathcal{L}_k$ has
at least \emph{one} $t$-connector.
In contrary, we \emph{correct the exponent} by $-{\varepsilon_k}/(\tau-2)$,
implying that $(s_k)_{k\geq 0}$ grows slower.
In return, every vertex in $\mathcal{L}_k$ has \emph{many} $t$-connectors whp, that allows for small weighted distances.
\item By the construction of $\pi^{\mathrm{gr}}$, the weighted distance between $\pi_0$ and $\pi_{K_t}$
can be bounded by
\begin{equation}
d_L(q', \pi_{K_t})\leq
\sum_{k\in[K_t]}\min_{(y,z)\in[\mathcal{A}_k(\pi_{k-1})]}\left\{L_{(\pi_{k-1}, y)} + L_{(y,z)}\right\}.
\label{eq:weighted_distance_in_t_sum_tk}
\end{equation}
As the minimum is non-decreasing if we consider less elements,
conditionally on the complement of the event in \eqref{eq:small_neighbourhoods_error_outline}, we weaken
the bound to
\begin{equation}
d_L(q', \pi_{K_t})\leq \sum_{k\in[K_t]}\min_{j\in[n_k]}\left\{L^{(k)}_{j1} + L^{(k)}_{j2}\right\}.
\label{eq:weighted_distance_in_t_sum_nk}
\end{equation}
We show that the generalized inverse $F_{L_1 + L_2}^{(-1)}$ can be related to the generalized inverse $F_L^{(-1)}$. This allows us
to bound the rhs of \eqref{eq:weighted_distance_in_t_sum_nk} and obtain the asserted bound
\eqref{eq:weighted_distance_in_t} from Proposition \ref{prop:weighted_distance_in_t}
as we make the error probabilities arbitrarily small by choosing $s_0$ sufficiently large.
\end{enumerate}
We start with Step (I) by proving an upper bound for $K_t$.
\begin{lemma}\label{lemma:kt_kast}
  Let $K_t$ be as in \eqref{eq:kt}, $K^\ast_{t}$ as in \eqref{eq:k_ast_qt}, $\tau\in(2,3)$, $\alpha\in[1/2,1)$. For FPA and VPA, if \eqref{eq:tightness_cond} holds, set
  \begin{equation}
    \varepsilon_k :=(k+2)^{-2}, \qquad k\in\mathbb{N}. \nonumber
  \end{equation}
  There exists a constant $c_{\ref{lemma:kt_kast}}>0$ such that for $s_0$ sufficiently large
  \begin{align}
    s_k \geq s_0^{c_{\ref{lemma:kt_kast}}(\tau-2)^{-k}}, \qquad
    2K_t \leq K^\ast_t+4.\label{eq:claim_sk_kt_kast}
  \end{align}
  For FPA and VPA if \eqref{eq:tightness_cond} does not hold and GVPA, set $\varepsilon_G>0$ such that
  \begin{equation}
     \frac{\log\big(1/(\tau-2)\big)}{\log\big((1-\varepsilon_G)/(\tau-2)\big)} = 1 + \varepsilon_{\ref{prop:weighted_distance_in_t}}. \label{eq:epsilon_G}
  \end{equation}
  Then for $s_0$ sufficiently large
  \begin{equation}
    2K_t \leq (1+\varepsilon_{\ref{prop:weighted_distance_in_t}})K^\ast_t + 4.
    \label{eq:claim_sk_kt_kast_general}
  \end{equation}
  \begin{proof}
    First we consider FPA and VPA under condition \eqref{eq:tightness_cond}.
    Recall the definition of $(s_k)_{k\geq 0}$ from \eqref{eq:sk}. By iterating the recursion, we obtain
    \[
    s_k = s_0^{\prod_{j=0}^{k-1}\left((1-\varepsilon_j)/(\tau-2)\right)}.
    \]
    By our choice of $\varepsilon_k=1/(k+2)^2$, the product $\prod_{j=1}^\infty (1-\varepsilon_j)>0$, which yields the first bound in \eqref{eq:claim_sk_kt_kast} for some constant $c_{\ref{lemma:kt_kast}}>0$.
    Hence, by the definition of $K_t$ in \eqref{eq:kt},
    \[
    K_t \leq \min\left\{k:s_0^{c_{\ref{lemma:kt_kast}}(\tau-2)^{-k}}\geq (\alpha t)^{\frac{1}{2(\tau-1)}}\log(\alpha t)^{-\frac{1}{2}} \right\}.
    \]
    By taking logarithms twice in the inequality between brackets above, we obtain
    \begin{equation}\label{eq:kt111}
    K_t\leq\left\lceil \frac{\log\left(\frac{1}{2(\tau-1)}\log(\alpha t) - \frac{1}{2}\log\log(\alpha t)\right) - \log\log(s_0)-\log(c)}{|\log(\tau-2)|}\right\rceil.
    \end{equation}
    If $s_0$ is sufficiently large, the numerator of $K_t$ above is smaller than $\log\log(t)$ for $t$ sufficiently large.
    Bounding the rounding operations yields by the definition of $K^\ast_t$ in \eqref{eq:k_ast_qt}
    \[
    2K_t \leq 2\frac{\log\log(t)}{|\log(\tau-2)|} + 2 \leq K^\ast_t+3.
    \]
    For proving \eqref{eq:claim_sk_kt_kast_general} for GVPA, and FPA and VPA without condition \eqref{eq:tightness_cond}, we use a similar reasoning. By taking logarithms twice in the definition of $K_t$ in \eqref{eq:kt}, we obtain a similar formula to that in \eqref{eq:kt111}, and after bounding the numerator as before, as well as using the implicit definition of $\varepsilon_G$ in \eqref{eq:epsilon_G} and that of $K^\ast_t$ in \eqref{eq:k_ast_qt}, we arrive to
    \[
    2K_t
    \,\leq\,
    (1+\varepsilon_{\ref{prop:weighted_distance_in_t}})K^\ast_t + 3 + \varepsilon_{\ref{prop:weighted_distance_in_t}}
    \,\leq\, (1+\varepsilon_{\ref{prop:weighted_distance_in_t}})K^\ast_t + 4,
    \]
    finishing the proof. \end{proof}
\end{lemma}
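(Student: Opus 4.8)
The plan is to unroll the recursion \eqref{eq:sk} in the regime in which the cap $(\alpha t)^{1/(2(\tau-1))}\log(\alpha t)^{-1/2}$ is not yet active, and then to invert the resulting closed form by taking logarithms twice. While $s_{k-1}$ is still below the cap we have $s_k = s_{k-1}^{(1-\varepsilon_{k-1})/(\tau-2)}$, so iterating down to $s_0$ gives
\[
s_k \;=\; s_0^{\,\prod_{j=0}^{k-1}\left((1-\varepsilon_j)/(\tau-2)\right)} \;=\; s_0^{\,(\tau-2)^{-k}\prod_{j=0}^{k-1}(1-\varepsilon_j)}.
\]
For FPA and VPA under \eqref{eq:tightness_cond} one takes $\varepsilon_k=(k+2)^{-2}$; since $\sum_j\varepsilon_j<\infty$, the infinite product $\prod_{j\ge 0}(1-\varepsilon_j)$ converges to a strictly positive number $c_{\ref{lemma:kt_kast}}$ (in fact $=1/2$), whence $\prod_{j=0}^{k-1}(1-\varepsilon_j)\ge c_{\ref{lemma:kt_kast}}$ for every $k$, yielding the first claim $s_k\ge s_0^{c_{\ref{lemma:kt_kast}}(\tau-2)^{-k}}$ in \eqref{eq:claim_sk_kt_kast}.

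Next I would bound $K_t$. Because $k\mapsto s_0^{c_{\ref{lemma:kt_kast}}(\tau-2)^{-k}}$ is strictly increasing and lies below $s_k$, the index $K_t$ from \eqref{eq:kt} is at most the first $k$ for which $s_0^{c_{\ref{lemma:kt_kast}}(\tau-2)^{-k}}$ meets the cap. Taking $\log$ of that inequality and then $\log$ once more turns it into a linear inequality in $k$ with slope $|\log(\tau-2)|$, which solves to
\[
K_t \;\le\; \left\lceil \frac{\log\!\big(\tfrac{1}{2(\tau-1)}\log(\alpha t)-\tfrac12\log\log(\alpha t)\big) - \log\log s_0 - \log c_{\ref{lemma:kt_kast}}}{|\log(\tau-2)|}\right\rceil .
\]
Since $\log\big(\tfrac{1}{2(\tau-1)}\log(\alpha t)-\tfrac12\log\log(\alpha t)\big) = \log\log t + O(1)$ and every $\tau$- and $\alpha$-dependent $O(1)$ term (including $\log c_{\ref{lemma:kt_kast}}$) is absorbed once $s_0$ is chosen large enough that $\log\log s_0$ dominates it, the numerator is $\le \log\log t$ for $t$ large. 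Then $K_t \le \log\log(t)/|\log(\tau-2)| + 1$, so $2K_t\le 2\log\log(t)/|\log(\tau-2)|+2 \le K^\ast_t+3\le K^\ast_t+4$, using $K^\ast_t\ge 2\log\log(t)/|\log(\tau-2)|-1$ from \eqref{eq:k_ast_qt}; this is the second part of \eqref{eq:claim_sk_kt_kast}.

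For GVPA, and for FPA and VPA without \eqref{eq:tightness_cond}, the sequence $\varepsilon_k\equiv\varepsilon_G$ is constant, so before the cap $s_k=s_0^{\,((1-\varepsilon_G)/(\tau-2))^k}$; the defining relation \eqref{eq:epsilon_G} forces $\log((1-\varepsilon_G)/(\tau-2))=|\log(\tau-2)|/(1+\varepsilon_{\ref{prop:weighted_distance_in_t}})>0$, so the base exceeds $1$ and the same double-logarithm inversion now has slope $|\log(\tau-2)|/(1+\varepsilon_{\ref{prop:weighted_distance_in_t}})$. It yields $K_t\le (1+\varepsilon_{\ref{prop:weighted_distance_in_t}})\log\log(t)/|\log(\tau-2)|+1$, hence $2K_t\le (1+\varepsilon_{\ref{prop:weighted_distance_in_t}})K^\ast_t + 3 + \varepsilon_{\ref{prop:weighted_distance_in_t}} \le (1+\varepsilon_{\ref{prop:weighted_distance_in_t}})K^\ast_t + 4$, which is \eqref{eq:claim_sk_kt_kast_general}.

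The computations are elementary; the one point that needs care is the bookkeeping of additive constants through the two logarithms and the ceiling, so that the final slack is exactly $4$ — concretely, one must take $s_0$ large (depending on $\tau$, $\alpha$ and, in the second case, on $\varepsilon_{\ref{prop:weighted_distance_in_t}}$) so that $\log\log s_0$ swallows all the $O(1)$ contributions from the cap and from $c_{\ref{lemma:kt_kast}}$, and one should note that for $\tau$ close to $3$ the sequence $(s_k)_k$ itself need not be monotone, which is why one compares $K_t$ against the monotone lower envelope $s_0^{c_{\ref{lemma:kt_kast}}(\tau-2)^{-k}}$ rather than against $s_k$ directly.
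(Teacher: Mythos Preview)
Your proof is correct and follows essentially the same route as the paper: unroll the recursion \eqref{eq:sk} to $s_k=s_0^{(\tau-2)^{-k}\prod_{j<k}(1-\varepsilon_j)}$, use convergence of the product (your explicit evaluation $c_{\ref{lemma:kt_kast}}=1/2$ is correct), and then invert by taking two logarithms to bound $K_t$. Your closing remark about the possible non-monotonicity of $(s_k)$ for $\tau$ near $3$ is a nice clarification the paper does not make explicit, but the argument itself is the same.
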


We recall two preliminary lemmas from \cite{dommers2010diameters}
that help us control the error probability in \eqref{eq:small_neighbourhoods_error_outline}.
\begin{lemma}[Probability on being a $\alpha$-connector for an arbitrary set \cite{dommers2010diameters}]\label{lemma:t_connector_probability}
    Consider PA under the same conditions as Proposition \ref{prop:weighted_distance_in_t}.
Let $\alpha\in[1/2,1)$. For $x\in[\alpha t]$, a set $\mathcal{V}\subset[\alpha t]$, conditionally
on $\mathrm{PA}_{\alpha t}$, the probability that $y\in[t]\backslash[\alpha t]$ is an $\alpha$-connector
of $(x,\mathcal{V})$ is at least
\begin{equation}
   \frac{\eta_{\ref{lemma:t_connector_probability}} D_{\alpha t}(x)D_{\alpha t}(\mathcal{V})}{(\alpha t)^2}
=:p_{\alpha t}(x,\mathcal{V}),\label{eq:t_connector_probability}
\end{equation}
where $\eta_{\ref{lemma:t_connector_probability}}>0$ is a constant, and $D_{\alpha t}(\mathcal{V}):=\sum_{z\in \mathcal{V}}D_{\alpha t}(z)$.
Moreover, w/p at least $p_{\alpha t}(x,\mathcal{V})$, the event $\{y \text{ is an $\alpha$-connector of } (x,\mathcal{V})\}$ happens independently of other vertices in $[t]\backslash[\alpha t]$.
\end{lemma}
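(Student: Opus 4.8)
The plan is to compute, conditionally on $\mathrm{PA}_{\alpha t}$, a lower bound on the probability that a vertex $y$ arriving after time $\alpha t$ attaches both to $x$ and to some vertex in $\mathcal{V}$. First I would fix a single vertex $y \in [t]\setminus[\alpha t]$, say $y$ arrives at time $s \in (\alpha t, t]$, and analyse the $m$ (for FPA) or the conditionally independent (for VPA/GVPA) edges that $y$ emits. For FPA, the probability that the $j$-th edge of $y$ attaches to a fixed old vertex $w \in [\alpha t]$ is, by \eqref{eq:pa_mdelta_connection}, proportional to $D^{\leftarrow}_{(s,j-1)}(w) + m(1+\delta/m)$ divided by the normalizer $Z_{s,j} = \Theta(s)$; since $s \leq t$ and the old vertices' degrees can only have grown since time $\alpha t$, this is at least $\eta\, D_{\alpha t}(w)/(\alpha t)$ for a suitable constant $\eta > 0$ (absorbing the $\alpha \in [1/2,1)$ factor and the additive $m(1+\delta/m) \geq 1$ term). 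For VPA/GVPA the same bound is immediate from $f(D^{\leftarrow}_s(w))/s \geq f(0)/s \cdot (\text{monotone part})$ together with $f$ concave and $\gamma_f$-linear, again giving $\geq \eta D_{\alpha t}(w)/(\alpha t)$.

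Next I would use that $y$ emits at least two edges (resp.\ has a positive chance of emitting $\geq 2$ edges in the variable-outdegree case) to get both attachments simultaneously. The event $\{y \leftrightarrow x\} \cap \{y \leftrightarrow \mathcal{V}\}$ contains, for FPA, the event that the first edge of $y$ lands on $x$ and the second lands somewhere in $\mathcal{V}$; by the two bounds above and the fact that conditioning on the first edge changes the second edge's normalizer and numerators only by $O(1)$, this has conditional probability at least $\eta' D_{\alpha t}(x) D_{\alpha t}(\mathcal{V}) / (\alpha t)^2$. For VPA/GVPA the edges to distinct vertices are conditionally independent, so the product bound is even cleaner. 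Summing is \emph{not} needed here — the statement is about a single $y$ — so I then set $\eta_{\ref{lemma:t_connector_probability}}$ to be the worst constant over the three models. The independence clause follows because in VPA/GVPA the presence of edges from $y$ to $\{x\} \cup \mathcal{V}$ is determined by independent coin flips that do not involve any other vertex of $[t]\setminus[\alpha t]$; for FPA one instead notes that the edges emitted by distinct later vertices $y, y'$ are only weakly (negatively) correlated, and the stated lower bound on the probability, being a lower bound, is unaffected — more precisely one couples the FPA attachment choices with independent ones dominating them from below, which is exactly the content of \cite[Lemma 3.x]{dommers2010diameters} that this lemma cites.

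The main obstacle is handling the FPA case carefully: because the total number of edges is deterministic, the events $\{y \to x\}$ and $\{y \to \mathcal{V}\}$ for the two edges of $y$ are negatively correlated with edges of other vertices, and within $y$'s own $m$ edges the normalizers $Z_{s,j}$ and degrees shift as edges are placed. The resolution is that all these effects only \emph{help} a lower bound (negative correlation between different $y$'s means a product lower bound still holds, and the within-$y$ degree increments only increase numerators), so one can afford to throw away all but two of $y$'s edges and bound each of the two conditional attachment probabilities crudely by $\eta D_{\alpha t}(\cdot)/(\alpha t)$; the constant $\eta_{\ref{lemma:t_connector_probability}}$ then absorbs $m$, $\delta$, $\alpha^{-1}$, and the normalizer constants. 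Since the cited reference \cite{dommers2010diameters} already carries out this bookkeeping, I would state the FPA part as following from a direct adaptation of their argument and give the short self-contained computation only for VPA/GVPA.
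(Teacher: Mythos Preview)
The paper does not supply its own proof of this lemma: it is explicitly recalled from \cite{dommers2010diameters} (see the sentence ``We recall two preliminary lemmas from \cite{dommers2010diameters}'' immediately preceding the statement), and the paper moves directly on to the next lemma without further argument. So there is no in-paper proof to compare against.

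That said, your sketch is a faithful outline of the argument in the cited source. You correctly isolate the two ingredients: (i) for a single late vertex $y$, the per-edge attachment probability to an old vertex $w$ is at least $\eta\,D_{\alpha t}(w)/(\alpha t)$ because degrees only grow after time $\alpha t$ and the normalizer is $\Theta(t)$; and (ii) two such attachments (one to $x$, one into $\mathcal{V}$) can be secured with the product probability, with the FPA negative correlations and within-$y$ normalizer shifts only helping a lower bound. Your handling of the independence clause is also the right idea: for VPA/GVPA the edges from different late vertices are conditionally independent given $\mathrm{PA}_{\alpha t}$ once one fixes the lower bound in terms of $D_{\alpha t}$ rather than the evolving degrees, and for FPA one invokes the domination-by-independent-Bernoullis coupling from \cite{dommers2010diameters}. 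One small point to tighten: in the VPA/GVPA case you write the bound as ``$f(0)/s\cdot(\text{monotone part})$'', but what you actually need is $f(k)\ge \gamma_f k$, which follows from concavity of $f$ and the fact that the increments $f(j+1)-f(j)$ are non-increasing with limit $\gamma_f$; this gives the degree-linear lower bound directly.
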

\noindent We use the above lemma for layers $\mathcal{V}=\mathcal{L}_k$ and a vertex $x\in\mathcal{L}_{k-1}$.
The following lemma allows us to lower bound the total degree of vertices in $\mathcal{L}_k$,
$D_{\alpha t}\left({\mathcal{L}_k}\right)$.
Although it assumes for the model $\mathrm{GVPA}(f)$ that the function $f(x)$ needs to be affine for $x$ sufficiently large,
we show below that this restriction does not propagate to the requirements of Proposition \ref{prop:weighted_distance_in_t}.
\begin{lemma}[Impact of high degree vertices {\cite[Lemma A.1]{dommers2010diameters}}, {\cite[Theorem 1.1(a)]{dereich2009random}}]\label{lemma:total_degree_high_degree_vertices} Let $PA_t$
satisfy the same conditions as Proposition \ref{prop:weighted_distance_in_t}, and additionally for
  $\mathrm{GVPA}(f)$, assume that the function $f(x)$ is affine for all $x$ larger than some $x_0\in\mathbb{R}$.
  Let $\alpha\in[1/2,1)$, and
  $\phi\in\mathbb{R}$ satisfy $x_0\leq\phi\leq (\alpha t)^{\frac{1}{2(\tau-1)}}(\log (\alpha t))^{-\frac{1}{2}}$. There exists a
  constant $c_{\ref{lemma:total_degree_high_degree_vertices}}>0$ such that
  \begin{equation}
    \prob\bigg(\sum_{z:D_{\alpha t}(z)\geq \phi}D_{\alpha t}(z) \geq c_{\ref{lemma:total_degree_high_degree_vertices}}\alpha t\phi^{2-\tau}\bigg) = 1-o(t^{-1}).
    \nonumber
  \end{equation}
  Moreover, whp the number of vertices with degree at least $\phi$ is at least $\sqrt{\alpha t}$.
\end{lemma}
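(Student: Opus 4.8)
Write $N_{\alpha t}(\phi):=\#\{z\in[\alpha t]:D_{\alpha t}(z)\geq\phi\}$. The plan is to reduce \emph{both} assertions to a single lower bound on $N_{\alpha t}(\phi)$. Every vertex counted in $N_{\alpha t}(\phi)$ contributes at least $\phi$ to the sum, so $\sum_{z:D_{\alpha t}(z)\geq\phi}D_{\alpha t}(z)\geq\phi\,N_{\alpha t}(\phi)$; and the hypothesis $\phi\leq(\alpha t)^{1/(2(\tau-1))}(\log\alpha t)^{-1/2}$ gives $\alpha t\,\phi^{1-\tau}\geq\sqrt{\alpha t}\,(\log\alpha t)^{(\tau-1)/2}\geq\sqrt{\alpha t}$ once $t$ is large. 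Hence it suffices to prove that there is a constant $c_0>0$ with
\begin{equation}
  \prob\big(N_{\alpha t}(\phi)\geq \tfrac12 c_0\,\alpha t\,\phi^{1-\tau}\big)=1-o(t^{-1}),\nonumber
\end{equation}
since on this event the displayed inequality of the lemma holds with $c_{\ref{lemma:total_degree_high_degree_vertices}}=\tfrac12 c_0$ (because $\phi\cdot\tfrac12 c_0\,\alpha t\,\phi^{1-\tau}=\tfrac12 c_0\,\alpha t\,\phi^{2-\tau}$), and the ``moreover'' statement follows from $\tfrac12 c_0\,\alpha t\,\phi^{1-\tau}\geq\sqrt{\alpha t}$.

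For the first moment I would invoke the known power-law form of the (expected) degree distribution of $\mathrm{PA}_{\alpha t}$: the expected number of vertices of degree at least $\phi$ in $\mathrm{PA}_{\alpha t}$ is of order $(\alpha t)\,\phi^{-(\tau-1)}$ for $\phi$ up to a power of $\alpha t$. For $\mathrm{FPA}(m,\delta)$ this is the content underlying \cite[Lemma A.1]{dommers2010diameters} and is contained in \cite[Chapter 8]{hofstad2016book1} (the degree of the vertex born at time $i$ is a Pólya-type urn, concentrated around a multiple of $(\alpha t/i)^{1/(\tau-1)}$, so $\prob(D_{\alpha t}(v_i)\geq\phi)$ is bounded below by a positive constant uniformly over a range $i\in[i_0,\,c_1\alpha t\,\phi^{-(\tau-1)}]$). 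For $\mathrm{VPA}(f)$, and for $\mathrm{GVPA}(f)$ with $f$ affine beyond $x_0$, the analogous statement follows from \cite[Theorem 1.1(a)]{dereich2009random} together with the accompanying a.s.\ convergence of rescaled degrees, the tail exponent $\tau-1=1/\gamma$ being governed precisely by the affine part of $f$ — which, since $\phi\geq x_0$, is the only part relevant for reaching degree $\phi$. Summing over the $\Theta(\alpha t\,\phi^{-(\tau-1)})$ admissible vertices yields $\E[N_{\alpha t}(\phi)]\geq c_0\,\alpha t\,\phi^{1-\tau}$.

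For the concentration I would view $N_{\alpha t}(\phi)$ as a function of the edge-formation randomness and apply a bounded-differences (Azuma--Hoeffding/McDiarmid) inequality to the Doob martingale obtained by revealing the edges in their order of creation. In $\mathrm{VPA}$ and $\mathrm{GVPA}$ the edges are conditionally independent and resampling a single edge changes $N_{\alpha t}(\phi)$ by at most $1$; in $\mathrm{FPA}$ resampling the $m$ edges of a single arriving vertex changes it by at most $m$. Either way there are $\Theta(\alpha t)$ revelations, each contributing $O(1)$ to the sum of squared increments, so Azuma gives $\prob\big(N_{\alpha t}(\phi)\leq\tfrac12\E[N_{\alpha t}(\phi)]\big)\leq\exp\big(-c'(\E N_{\alpha t}(\phi))^2/(\alpha t)\big)$. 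By the previous paragraph and the upper bound on $\phi$, $(\E N_{\alpha t}(\phi))^2/(\alpha t)\gtrsim(\log\alpha t)^{\tau-1}$, and since $\tau-1>1$ this is $\omega(\log t)$, so the right-hand side is $o(t^{-1})$. This establishes the displayed bound above, hence the lemma.

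The main obstacle is the model-specific first-moment input: the lower bound $\prob(D_{\alpha t}(v_i)\geq\phi)\geq c_0$ uniformly over intermediate ages $i$, evaluated at the intermediate time $\alpha t$ rather than at $t$. This is where one imports \cite[Lemma A.1]{dommers2010diameters} for FPA and \cite[Theorem 1.1(a)]{dereich2009random} for VPA. For GVPA a secondary point to verify is that passage of the degree process through the initial non-affine range $\{0,1,\dots,\lceil x_0\rceil\}$ does not destroy the constant-probability lower bound; this is harmless because $f>0$ there, so this bounded range is crossed with probability bounded away from $0$, after which the affine (i.e.\ VPA) rule produces the power-law tail with exponent $1+1/\gamma$. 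Finally, it is immaterial that $D_{\alpha t}$ is the total degree rather than the indegree, since $\phi$ dwarfs the (constant, resp.\ tight) outdegree of the vertices in question.
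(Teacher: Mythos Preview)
The paper does not give its own proof of this lemma; it simply cites \cite[Lemma A.1]{dommers2010diameters} for FPA and \cite[Theorem 1.1(a)]{dereich2009random} for (G)VPA, noting that the former carries over when $\phi$ need not tend to infinity and that the latter (total-variation convergence of the degree sequence, combined with \cite[Example~3.1]{dereich2009random}) yields the statement as an immediate corollary. So your sketch is being judged on its own, not against a proof in the paper.

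Your reduction of both assertions to a lower bound on $N_{\alpha t}(\phi)$, and the first-moment input, are correct and clean. The gap is in the concentration step. The claim ``resampling a single edge changes $N_{\alpha t}(\phi)$ by at most $1$ (respectively $m$)'' is the McDiarmid Lipschitz hypothesis, and it is \emph{not} satisfied in the naive edge-by-edge representation: in all three models, redirecting one edge changes the degree of its endpoint, which changes all subsequent attachment probabilities, and this cascade can alter many later edges. In GVPA, for instance, flipping the Bernoulli for the edge $(t,v)$ can propagate through the entire indegree history of $v$ and simultaneously perturb the outdegree of every $t'>t$ by $1$; in the worst case $N_{\alpha t}(\phi)$ moves by far more than $1$. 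For FPA the same cascade is present. What \emph{is} true for FPA is that the Linearized-Chord-Diagram / P\'olya-urn construction expresses the graph as a deterministic function of genuinely independent variables with a bounded Lipschitz constant, and that is how \cite{dommers2010diameters} obtains concentration; you would need to invoke this representation explicitly rather than the raw edge filtration.

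For (G)VPA there is a much simpler route you are missing, and it is closer to what \cite{dereich2009random} actually uses. Observe that $D^{\leftarrow}_{\alpha t}(v)$ depends only on the uniforms $(U_{s,v})_{s>v}$ governing edges into $v$; hence the indegrees of distinct vertices are mutually \emph{independent}. Since $N_{\alpha t}(\phi)\geq\sum_v \mathds{1}\{D^{\leftarrow}_{\alpha t}(v)\geq\phi\}$, a Chernoff bound on this sum of independent indicators gives the needed concentration directly, with the $1-o(t^{-1})$ rate coming from $\E[N_{\alpha t}(\phi)]\gtrsim\sqrt{\alpha t}(\log\alpha t)^{(\tau-1)/2}$ as you computed.
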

\noindent Our version of the above lemma is slightly different from \cite[Lemma A.1]{dommers2010diameters},
as we do not assume that $\phi=\phi(t)$ tends to infinity with $t$,
while \cite{dommers2010diameters} does.
We refer the reader to the proof of \cite[Lemma A.1]{dommers2010diameters}
to see that the proof is also valid for constant $\phi$. Also \cite[Theorem 1.1(a)]{dereich2009random}
is slightly different from the statement here, as it states convergence of the degree distribution in total variation norm.
However, especially in combination with \cite[Example 3.1]{dereich2009random}, it is easy to check Lemma \ref{lemma:total_degree_high_degree_vertices} is an immediate corollary.
We continue with the main lemma of Step (II).
 Recall $K_t$, $(s_k)_{k\geq 0}$, and $\{\mathcal{L}_k\}_{k\geq 0}$
  from Definition \ref{def:layers_connectors_greedy} and $\varepsilon_k$ from Lemma \ref{lemma:kt_kast}.
\begin{lemma}[Lower bound on the number of {$\alpha$-connectors}]\label{lemma:error_prob_tk}
  Consider FPA or VPA under the same conditions as Proposition \ref{prop:weighted_distance_in_t}.
  Let $\alpha\in[1/2,1)$.
There exists constant $c_{\ref{lemma:error_prob_tk}}>0$, $c'>0$,
  such that for an arbitrary set $\{\pi_0, ..., \pi_{K_t-1}\}$, where $\pi_k\in\mathcal{L}_k$,
  and $s_0, t$ sufficiently large
  \begin{equation}
  \prob\bigg(\bigcup_{k\in[K_t]}\left\{|\mathcal{A}_k(\pi_{k-1})|\leq c_{\ref{lemma:error_prob_tk}} s_{k-1}^{{\varepsilon_{k-1}/2}}\right\}\bigg)
  \leq 2\exp\left(-\frac{c_{\ref{lemma:error_prob_tk}}}{4}s_0^{c'}\right)=:\delta_{\ref{lemma:error_prob_tk}}^{(s_0)}.\label{eq:error_prob_tk}
  \end{equation}
  \begin{proof}
    First, we show a stochastic domination argument of GVPA to VPA. Afterwards, we prove the existence of a binomial random variable $A$ that
    is dominated by $|\mathcal{A}_k(\pi_{k-1})|$. Lastly, we apply Chernoff's bound to $A$ and
    show that the result follows.

    In order to apply Lemma \ref{lemma:total_degree_high_degree_vertices}, $f(x)$ must be an affine function for large $x$. Recall $\gamma_f$ from \eqref{eq:gamma_f} and assume $f$ is non-affine.
    For any $\gamma_{f_1}\in(1/2,\gamma_f)$, there are $x_0\in\mathbb{N}$, $\eta\in\mathbb{R}$, such that
    \[
    f(x)\geq f_1(x):=
    \begin{dcases}
      f(x)&x\leq x_0\\
      \gamma_{f_1}x + \eta&x>x_0,
    \end{dcases}
    \]
    which is affine for $x>x_0$, and still concave.
    Hence, the model $\mathrm{GVPA}(f_1)$ is well-defined and is stochastically dominated by $\mathrm{GVPA}(f)$.
    Assume that $s_0>x_0$ so that we can apply Lemma \ref{lemma:total_degree_high_degree_vertices} on the model GVPA($f_1$).
    As $\gamma_{f_1}$ can be chosen arbitrarily close to $\gamma_{f}$, we can choose it such that the power-law exponent $\tau_{f_1}$
    is close to $\tau_f$, in particular so that the inequality
    \begin{equation} \label{eq:stochastic_domination_epsilon_bound}
    \frac{1-\varepsilon_G/2}{1-\varepsilon_G} \geq \frac{\tau_{f_1}-2}{\tau-2}
  \end{equation}
  holds with $\varepsilon_G$ from \eqref{eq:epsilon_G}. Assume that $s_0>x_0$, and write $\varepsilon_k\equiv \varepsilon_G$ for the model GVPA. Define
    \[
    \tau' :=
    \begin{dcases}
      \tau & \text{for FPA, VPA,}\\
      \tau_{f_1} &\text{for GVPA}.
    \end{dcases}
    \]
    Let $\pi_{k-1}\in\mathcal{L}_{k-1}$ for some $k\in[K_t]$.
    By Lemma \ref{lemma:t_connector_probability}, the probability that
    $y\in[t]\backslash[\alpha t]$ is an $\alpha$-connector of $(\pi_{k-1}, \mathcal{L}_k)$
    is at least $p_{\alpha t}(\pi_{k-1}, \mathcal{L}_k)$, independently of other vertices
    in $[t]\backslash[\alpha t]$, see \eqref{eq:t_connector_probability}.
    Since there are in total $(1-\alpha)t$ possible $\alpha$-connectors,
    the random variable $|\mathcal{A}_k(\pi_{k-1})|$ stochastically dominates
    a binomial random variable, i.e.,
    \begin{equation}
      |\mathcal{A}_k(\pi_{k-1})|\overset{d}{\geq}\text{Bin}\left((1-\alpha) t, p_{\alpha t}(\pi_{k-1}, \mathcal{L}_k)\right)=:A_k.
      \label{eq:tk_dominated}
    \end{equation}
    Here, the notation $\overset{d}\geq$ is used for stochastic domination.
    Conditioning on $\mathcal D_{\mathcal{L}_k}(\alpha t)$
    yields by Lemma \ref{lemma:total_degree_high_degree_vertices} for $\tau'$,
    \begin{equation}
    \E[A_k]
    \geq \E\big[A_k \mid D_{\mathcal{L}_k}(\alpha t)\geq c_{\ref{lemma:total_degree_high_degree_vertices}}\alpha ts_k^{2-\tau'}\big]\prob\left(D_{\mathcal{L}_k}(\alpha t)\geq c_{\ref{lemma:total_degree_high_degree_vertices}}\alpha ts_k^{2-\tau'}\right),
    \nonumber    \end{equation}
    where the latter factor equals $1-o(t^{-1})$.
    Since $\pi_{k-1}\in\mathcal{L}_{k-1}$ and thus $D_{\alpha t}(\pi_{k-1})\geq s_{k-1}$ by the construction of $\mathcal{L}_{k-1}$
    in Definition \ref{def:layers_connectors_greedy}, we substitute the value
    of $p_{\alpha t}(\pi_{k-1},\mathcal{L}_k)$ in \eqref{eq:t_connector_probability} to
    bound the expectation of the binomial random variable $A_k$ further to obtain
    \begin{align}
    \E[A_k]
    &\geq (1-\alpha)t\frac{\eta_{\ref{lemma:t_connector_probability}} c_{\ref{lemma:total_degree_high_degree_vertices}}\alpha ts_k^{2-\tau'}s_{k-1}}{(\alpha t)^2}\left(1-o(t^{-1})\right)
    \geq 2c_{\ref{lemma:error_prob_tk}}s_k^{2-\tau'}s_{k-1} \nonumber\\
    &\geq 2c_{\ref{lemma:error_prob_tk}}s_{k-1}^{1 - (2-\tau')\left(\frac{1-\varepsilon_{k-1}}{\tau-2}\right)}
    \geq 2c_{\ref{lemma:error_prob_tk}}s_{k-1}^{\varepsilon_{k-1}/2}\label{eq:lowerbound_etk}
    \end{align}
    for some constant $c_{\ref{lemma:error_prob_tk}}\in(0,(1-\alpha)c_{\ref{lemma:total_degree_high_degree_vertices}}\eta_{\ref{lemma:t_connector_probability}}/(2\alpha))$ if $t$ is sufficiently large,
    by the recursive definition of $s_k$ in \eqref{eq:sk}. The last inequality is a consequence of \eqref{eq:stochastic_domination_epsilon_bound}.
Next we apply Chernoff's bound, see e.g.\ \cite{mitzenmacher2005probability}, in the following form: for $\psi_k>0$,
    \begin{equation}
    \prob(A_k\leq(1-\psi_k)\E[A_k])\leq \exp\big(-\psi_k^2\E[A_k]/2\big).
    \nonumber
    \end{equation}
    Choosing $\psi_k=(1-c_{\ref{lemma:error_prob_tk}}s_{k-1}^{{\varepsilon_{k-1}/2}}/\E[A_k])$,
    yields by \eqref{eq:lowerbound_etk} that $\psi_k\geq 1/2$.
    Hence, we can bound $\psi_k^2\E[A_k]\geq c_{\ref{lemma:error_prob_tk}}s_{k-1}^{{\varepsilon_{k-1}/2}}/2$,
    so that
    \[
    \prob\left(A_k\leq c_{\ref{lemma:error_prob_tk}} s_{k-1}^{\varepsilon_{k-1}/2}\right)\leq \exp\left(-c_{\ref{lemma:error_prob_tk}}s_{k-1}^{\varepsilon_{k-1}/2}/4\right).
    \]
    Applying a union bound over $k\in[K_t]$ and switching back to the
    dominating random variable $|\mathcal{A}_k(\pi_{k-1})|$ as in \eqref{eq:tk_dominated} results in
    \begin{equation}
    \prob\Big(\bigcup_{k\in [K_t]}\left\{|\mathcal{A}_k(\pi_{k-1})|\leq c_{\ref{lemma:error_prob_tk}} s_{k-1}^{\varepsilon_{k-1}/2}\right\}\Big)
    \leq \sum_{k\in [K_t]} \exp\left(-c_{\ref{lemma:error_prob_tk}}s_{k-1}^{\varepsilon_{k-1}/2}/4\right).
    \label{eq:error_prob_tk_error_prob_sum}
    \end{equation}
    Because $\varepsilon_k$ is a constant for GVPA, the asserted bound in \eqref{eq:error_prob_tk} follows immediately for $s_0$ sufficiently large.
    For FPA and VPA, it remains to bound the sum on the rhs. We apply the lower bound on $s_k$ from \eqref{eq:claim_sk_kt_kast} and observe that
   $(\tau-2)^{-k}$ grows much faster than $(k+2)^2$, so
    \[
    s_{k-1}^{{\varepsilon_{k-1}/2}}\ge s_0^{c_{\ref{lemma:kt_kast}}(\tau-2)^{-k}(k+2)^{-2}} \geq s_0^{c'c^{k}},
    \]
    for some $c,c'>0$, whence the asserted bound \eqref{eq:error_prob_tk} follows for all sufficiently large $s_0$.
  \end{proof}
\end{lemma}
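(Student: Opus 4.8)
The plan is to convert the union bound over the $K_t$ layers into a single multiplicative Chernoff estimate, applied conditionally on $\mathrm{PA}_{\alpha t}$. Because the asserted bound must be uniform over every deterministic choice $\pi_0\in\mathcal L_0,\dots,\pi_{K_t-1}\in\mathcal L_{K_t-1}$, I would first condition on $\mathrm{PA}_{\alpha t}$: this fixes the layers $\mathcal L_k$, all degrees $D_{\alpha t}(\cdot)$, and the memberships $\pi_{k-1}\in\mathcal L_{k-1}$, leaving as the only randomness the attachment of the $(1-\alpha)t$ vertices arriving in $(\alpha t,t]$, which is what determines the $\alpha$-connectors. I would then work on the $\mathrm{PA}_{\alpha t}$-measurable event $G:=\bigcap_{k\le K_t}\{D_{\alpha t}(\mathcal L_k)\ge c_{\ref{lemma:total_degree_high_degree_vertices}}\,\alpha t\,s_k^{2-\tau}\}$, which by Lemma \ref{lemma:total_degree_high_degree_vertices} applied with $\phi=s_k$ (note $s_k\le (\alpha t)^{1/(2(\tau-1))}\log(\alpha t)^{-1/2}$ for $k\le K_t$) together with a union bound over the $K_t=O(\log\log t)$ layers has probability $1-o(1)$; the contribution of $G^c$ is then $o(1)$ and, for $t$ large, is absorbed into the constants defining $\delta^{(s_0)}_{\ref{lemma:error_prob_tk}}$.

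On $G$ I would proceed layer by layer. First observe that $|\mathcal A_k(\pi_{k-1})|\ge\#\{y\in[t]\setminus[\alpha t]:y\text{ is an }\alpha\text{-connector of }(\pi_{k-1},\mathcal L_k)\}$, since each such $y$ contributes at least one admissible pair $(y,z)$. By Lemma \ref{lemma:t_connector_probability}, conditionally on $\mathrm{PA}_{\alpha t}$ each arriving vertex is such a connector with probability at least $p_{\alpha t}(\pi_{k-1},\mathcal L_k)$ and with the stated independence property, so $|\mathcal A_k(\pi_{k-1})|$ stochastically dominates $A_k:=\mathrm{Bin}\big((1-\alpha)t,\,p_{\alpha t}(\pi_{k-1},\mathcal L_k)\big)$. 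Using $D_{\alpha t}(\pi_{k-1})\ge s_{k-1}$ and the definition of $p_{\alpha t}$ in \eqref{eq:t_connector_probability}, on $G$ its mean obeys $\mathbb E[A_k]\ge (1-\alpha)t\cdot\eta_{\ref{lemma:t_connector_probability}} s_{k-1}\cdot c_{\ref{lemma:total_degree_high_degree_vertices}}\alpha t\,s_k^{2-\tau}/(\alpha t)^2=c''\,s_{k-1}s_k^{2-\tau}$. Here the \emph{corrected} exponent in the recursion \eqref{eq:sk} is the crux: from $s_k\le s_{k-1}^{(1-\varepsilon_{k-1})/(\tau-2)}$, together with $\tau-2\in(0,1)$ and $2-\tau<0$, one gets $s_{k-1}s_k^{2-\tau}\ge s_{k-1}^{\varepsilon_{k-1}}\ge s_{k-1}^{\varepsilon_{k-1}/2}$, hence $\mathbb E[A_k]\ge 2c_{\ref{lemma:error_prob_tk}}\,s_{k-1}^{\varepsilon_{k-1}/2}$ for a suitable constant $c_{\ref{lemma:error_prob_tk}}>0$. (For the GVPA version used when proving Proposition \ref{prop:weighted_distance_in_t} one would first dominate $\mathrm{GVPA}(f)$ from below by $\mathrm{VPA}(f_1)$ with an affine concave $f_1\le f$ whose slope $\gamma_{f_1}$ is pushed close enough to $\gamma_f$ that \eqref{eq:stochastic_domination_epsilon_bound} holds with $\tau_{f_1}$; running the same estimate with $\tau'=\tau_{f_1}$ then produces exactly the exponent $\varepsilon_{k-1}/2$, which is why the statement carries the factor $\tfrac12$.)

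With $\mathbb E[A_k]\ge 2c_{\ref{lemma:error_prob_tk}}s_{k-1}^{\varepsilon_{k-1}/2}$ in hand, I would apply the Chernoff bound $\mathbb P(A_k\le(1-\psi)\mathbb E[A_k])\le\exp(-\psi^2\mathbb E[A_k]/2)$ with $\psi:=1-c_{\ref{lemma:error_prob_tk}}s_{k-1}^{\varepsilon_{k-1}/2}/\mathbb E[A_k]\ge\tfrac12$, obtaining on $G$ that $\mathbb P\big(|\mathcal A_k(\pi_{k-1})|\le c_{\ref{lemma:error_prob_tk}}s_{k-1}^{\varepsilon_{k-1}/2}\big)\le\exp\big(-c_{\ref{lemma:error_prob_tk}}s_{k-1}^{\varepsilon_{k-1}/2}/4\big)$; a union bound over $k\in[K_t]$ then leaves $\sum_{k\in[K_t]}\exp(-c_{\ref{lemma:error_prob_tk}}s_{k-1}^{\varepsilon_{k-1}/2}/4)$. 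By Lemma \ref{lemma:kt_kast} one has $s_{k-1}\ge s_0^{c_{\ref{lemma:kt_kast}}(\tau-2)^{-(k-1)}}$, so the exponents $c_{\ref{lemma:kt_kast}}(\tau-2)^{-(k-1)}\varepsilon_{k-1}/2$ are bounded below by a positive constant $c'$ and — since $(\tau-2)^{-(k-1)}$ overwhelms the polynomial $\varepsilon_{k-1}^{-1}=(k+1)^2$ under \eqref{eq:tightness_cond}, and $\varepsilon_k\equiv\varepsilon_G$ is constant otherwise — grow geometrically in $k$; the series therefore decays super-geometrically and, for $s_0$ large, is at most $2\exp(-\tfrac{c_{\ref{lemma:error_prob_tk}}}{4}s_0^{c'})$. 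Adding back the absorbed $\mathbb P(G^c)$ term yields the claimed bound $\delta^{(s_0)}_{\ref{lemma:error_prob_tk}}$.

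\textbf{Main obstacle.} The delicate point is securing $\mathbb E[A_k]\gtrsim s_{k-1}^{\varepsilon_{k-1}/2}$ instead of $\Theta(1)$: with the naive exponent $1/(\tau-2)$ used in \cite{dommers2010diameters, dereich2012typical} one has $s_{k-1}s_k^{2-\tau}=\Theta(1)$, so each layer has only $O(1)$ expected $\alpha$-connectors and Chernoff is powerless. Correcting the exponent to $(1-\varepsilon_k)/(\tau-2)$ buys a polynomially growing expected number of connectors, at the cost of a slower-growing sequence $(s_k)$; under \eqref{eq:tightness_cond} the choice $\varepsilon_k=(k+2)^{-2}$ keeps $\prod_j(1-\varepsilon_j)>0$, so the loss is only a constant factor in the exponent of $s_k$ — which is exactly what lets Lemma \ref{lemma:kt_kast} still deliver $2K_t\le K^\ast_t+4$. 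A secondary technicality is that the factor $\tfrac12$ in the target exponent must survive the GVPA-to-affine stochastic domination, which is precisely what the definition \eqref{eq:epsilon_G} of $\varepsilon_G$ and inequality \eqref{eq:stochastic_domination_epsilon_bound} are arranged to guarantee.
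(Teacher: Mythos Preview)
Your proposal is correct and follows essentially the same approach as the paper: binomial domination via Lemma~\ref{lemma:t_connector_probability}, the mean bound $\mathbb{E}[A_k]\ge 2c_{\ref{lemma:error_prob_tk}}s_{k-1}^{\varepsilon_{k-1}/2}$ obtained from the corrected recursion~\eqref{eq:sk} together with Lemma~\ref{lemma:total_degree_high_degree_vertices}, Chernoff with $\psi\ge\tfrac12$, a union bound, and finally summing via the growth estimate in Lemma~\ref{lemma:kt_kast}; the GVPA-to-affine domination and the role of~\eqref{eq:stochastic_domination_epsilon_bound} are also handled as in the paper. The only organizational difference is that you condition explicitly on $\mathrm{PA}_{\alpha t}$ and work on the good event $G$ (absorbing $\mathbb{P}(G^c)=o(1)$ at the end), whereas the paper folds the high-probability event from Lemma~\ref{lemma:total_degree_high_degree_vertices} directly into the expectation bound; your version is arguably slightly cleaner, but the substance is identical.
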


\noindent The above lemma ensures that there are \emph{many} $\alpha$-connectors.
However, we still need to bound the probability that the weighted
distance between $\pi_{k-1}$ and $\mathcal{L}_{k}$ is sufficiently small, given
that there are \emph{enough} $\alpha$-connectors, as described in Step (III) of the outline.
\begin{lemma}[Minimum of i.i.d.\ random variables]\label{lemma:minimum_iid_rvs}
  Let $L_1, ..., L_n$ be i.i.d.\ random variables having distribution $F_L$.
  Then for all $\xi>0$
  \begin{align}
  \prob\Big(\min_{j\in[n]} L_j \geq F_L^{(-1)}\left(n^{-1+\xi}\right)\Big)
  \overset{(\star)}\leq \mathrm{e}^{-n^{\xi}}, \qquad
  \prob\Big(\min_{j\in[n]} L_j \leq F_L^{(-1)}\left(n^{-1-\xi}\right)\Big)
  \overset{(\ast)}\leq
  n^{-\xi}.\label{eq:minimum-iid-lower}
  \end{align}
  \begin{proof}
    Since the random variables are i.i.d.,
    \begin{align*}
      \prob\Big(\min_{j\in[n]} L_j \geq z(n)\Big) &= (1-F_L(z(n)))^n,
    \end{align*}
    We substitute $z(n)=F_L^{(-1)}\left(n^{-1\pm\xi}\right)$, so that
    applying $(1-x)^n\leq \mathrm{e}^{-nx}$ yields ($\star$) in \eqref{eq:minimum-iid-lower}, and applying $(1-x)^n\geq 1-nx$ yields ($\ast$).
  \end{proof}
\end{lemma}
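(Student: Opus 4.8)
Since this is the minimum of $n$ i.i.d.\ random variables, the statement is a short tail computation, and the plan has only three ingredients: reduce the event on the minimum to a product by independence, substitute the generalized inverse and invoke its defining one-sided inequalities, and finish with the elementary estimates $(1-x)^n\le\mathrm{e}^{-nx}$ and $(1-x)^n\ge 1-nx$, valid for $x\in[0,1]$, $n\ge1$.

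First I would record the basic identity. As $L_1,\dots,L_n$ are i.i.d.\ with distribution function $F_L$, the event $\{\min_{j\in[n]}L_j\ge z\}$ equals $\bigcap_{j\in[n]}\{L_j\ge z\}$, so $\prob(\min_{j\in[n]}L_j\ge z)=\prob(L\ge z)^n=(1-F_L(z))^n$; here $F_L(z)$ is read as $\prob(L<z)$, and if $F_L$ jumps at $z$ one simply keeps the strict-inequality version of the relevant event, which only strengthens the resulting bounds. This identity is the sole probabilistic input.

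Next, for $(\star)$ I would take $z=F_L^{(-1)}(n^{-1+\xi})$. Right-continuity of $F_L$ together with the definition $F_L^{(-1)}(y)=\inf\{x:F_L(x)\ge y\}$ gives $F_L\big(F_L^{(-1)}(y)\big)\ge y$, hence $F_L(z)\ge n^{-1+\xi}$, and the identity combined with $1-x\le\mathrm{e}^{-x}$ yields $\prob(\min_{j\in[n]}L_j\ge z)=(1-F_L(z))^n\le\mathrm{e}^{-nF_L(z)}\le\mathrm{e}^{-n^{\xi}}$, which is $(\star)$. For $(\ast)$ I would take $z=F_L^{(-1)}(n^{-1-\xi})$ and pass to the complement: $\prob(\min_{j\in[n]}L_j\le z)=1-(1-F_L(z))^n\le nF_L(z)$ by $(1-x)^n\ge1-nx$ (equivalently a union bound over $j\in[n]$), while the companion property of the generalized inverse, $F_L(x)<y$ for all $x<F_L^{(-1)}(y)$ (so that $\prob(L<F_L^{(-1)}(y))\le y$), gives $F_L(z)\le n^{-1-\xi}$, whence $\prob(\min_{j\in[n]}L_j\le z)\le n\cdot n^{-1-\xi}=n^{-\xi}$, which is $(\ast)$.

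I do not expect any real obstacle here: the only point requiring a little care is pairing the correct one-sided form of $\{L\ge z\}$ versus $\{L>z\}$ with the two generalized-inverse inequalities $F_L\big(F_L^{(-1)}(y)\big)\ge y$ and $\prob\big(L<F_L^{(-1)}(y)\big)\le y$, a distinction that is vacuous whenever $F_L$ is continuous at the relevant point and, in any event, does not affect the way the lemma is used in Step~(III) of the proof of Proposition~\ref{prop:weighted_distance_in_t}.
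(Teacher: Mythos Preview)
Your proof is correct and follows essentially the same approach as the paper: reduce to $(1-F_L(z))^n$ by independence, substitute $z=F_L^{(-1)}(n^{-1\pm\xi})$, and apply the elementary bounds $(1-x)^n\le\mathrm{e}^{-nx}$ and $(1-x)^n\ge1-nx$. You are in fact more careful than the paper in spelling out the one-sided inequalities for the generalized inverse and the strict-versus-nonstrict distinction, which the paper leaves implicit.
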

\noindent
We are ready to prove Proposition \ref{prop:weighted_distance_in_t}.
\begin{proof}[Proof of Proposition \ref{prop:weighted_distance_in_t}]
    Consider the greedy path $\pi^{\mathrm{gr}}$ starting from some $q'=q'(s_0)$
    as defined in Definition \ref{def:layers_connectors_greedy}.
    The definition of $K_t$ in \eqref{eq:kt} ensures that $\pi^{\mathrm{gr}}$
    ends in $\mathrm{Inner}_{\alpha}$. By construction of $\pi^{\mathrm{gr}}$, its
    total weight is bounded by the formula in \eqref{eq:weighted_distance_in_t_sum_tk}.
    Lemma \ref{lemma:error_prob_tk} ensures that w/p at least $1-\delta_{\ref{lemma:error_prob_tk}}^{(s_0)}$, the number of $\alpha$-connectors $(n_k)_{k\geq1}$ in \eqref{eq:small_neighbourhoods_error_outline}
    is at least
    \[
    n_k:=c_{\ref{lemma:error_prob_tk}}s_0^{c_{\ref{lemma:kt_kast}}\left(\tau-2\right)^{-(k-1)}\varepsilon_{k-1}/2}.
    \]
    By Lemma \ref{lemma:kt_kast}, $n_k\gg1$
    for all $k\geq 0$ when $s_0$ is sufficiently large.
    We bound the total weight on the greedy path using \eqref{eq:weighted_distance_in_t_sum_nk}.
    Applying ($\star$) in \eqref{eq:minimum-iid-lower} from Lemma \ref{lemma:minimum_iid_rvs}
     and a union bound over $k\in[K_t]$, obtains for some $\xi>0$ and all $s_0$ sufficiently large
    \begin{align}
      \prob\Bigg(
        \bigcup_{k\in[K_t]}
        \bigg\{
        \min_{j\in[n_k]}&\Big(L_{1j}^{(k)}+L_{2j}^{(k)}\Big)
        \geq F_{L_1+L_2}^{(-1)}
              \Big(n_k^{-(1-\xi)}
              \Big)
        \bigg\}
      \Bigg)\nonumber\\
      &\leq
      \sum_{k\in[K_t]}\exp\left(-c_{\ref{lemma:error_prob_tk}}^\xi s_{k-1}^{\xi{{\varepsilon_{k-1}/2}}}\right)
      \leq
      \sum_{k\in\mathbb{N}}\exp\left(-c_{\ref{lemma:error_prob_tk}}^\xi s_{k-1}^{\xi{{\varepsilon_{k-1}/2}}}\right)
      =:\delta_{\eqref{eq:min_error_prob}}^{(s_0)},\label{eq:min_error_prob}
    \end{align}
    which can be made arbitrarily small by choosing $s_0$ large enough, similar to the reasoning below \eqref{eq:error_prob_tk_error_prob_sum}.
    We can choose $s_0$ so large that the error probabilities
    $
    \delta_{\eqref{eq:min_error_prob}}^{(s_0)}
    +\delta_{\ref{lemma:error_prob_tk}}^{(s_0)}
    \leq \delta_{\ref{prop:weighted_distance_in_t}}$.
    As we recall the formula for $(s_k)_{k\geq 0}$ from \eqref{eq:sk}
    and the \emph{weakened} upper bound on the total weight \eqref{eq:weighted_distance_in_t_sum_nk},
    we obtain for any $q'$ such that $D_{q'}(\alpha t)\geq s_0$, with error probability
    at most $\delta_{\ref{prop:weighted_distance_in_t}}$,
    \begin{align*}
      d_L^{(t)}(q', \mathcal{L}_{K_t})\leq
      \begin{dcases}
        \sum_{k\in[K_t]}F_{L_1+L_2}^{(-1)}
              \left(
                c_{\ref{lemma:error_prob_tk}}^{-(1-\xi)}s_0^{-(1-\xi)c_{\ref{lemma:kt_kast}}\left(\tau-2\right)^{-(k-1)}(k+1)^{-2}/2}
              \right),&\text{(FV)},\\
              \sum_{k\in[K_t]}F_{L_1+L_2}^{(-1)}
                    \left(
                      c_{\ref{lemma:error_prob_tk}}^{-(1-\xi)}s_0^{-(1-\xi)c_{\ref{lemma:kt_kast}}\left((1-\varepsilon_G)/(\tau-2)\right)^{-(k-1)}\varepsilon_{G}/2}
                    \right),&\text{(G),}
      \end{dcases}
    \end{align*}
    where we annotated the lines with (FV) if it holds for FPA and VPA under condition \eqref{eq:tightness_cond},
    and with (G) otherwise. We continue to do so below.
    Using \eqref{eq:claim_sk_kt_kast}, there exists a constant $c$, such that
    \begin{equation}
    d_L^{(t)}(q', \mathcal{L}_{K_t})
    \leq
    \begin{dcases}
      \sum_{k\in[K_t]}F_{L_1+L_2}^{(-1)}
          \left(
          \exp\left(-c\log(s_0)(\tau-2)^{-k}(k+1)^{-2}\right)
          \right),&\text{(FV)} \\
          \sum_{k\in[K_t]}F_{L_1+L_2}^{(-1)}
              \left(
              \exp\left(-c\log(s_0)(1-\varepsilon_G)^k(\tau-2)^{-k}\varepsilon_G\right)
              \right),&\text{(G)}
    \end{dcases}
    \label{eq:upperbound_before_remove_convolution}
  \end{equation}
    w/p at least $1-\delta_{\ref{prop:weighted_distance_in_t}}$ if $s_0$ is sufficiently large.
    We bound the above sums, so that the terms match the terms of $Q_t$ in \eqref{eq:k_ast_qt}:
    first we remove the convolution, then we switch to an integral, apply a variable transformation, and eventually switch back to a sum.
    The sum in \eqref{eq:upperbound_before_remove_convolution} is taken over $F_{L_1+L_2}^{(-1)}$, while
    the summand in $Q_t$ is taken over $F_{L}^{(-1)}$. We relate the two inverses for $x<0$ by
    \[
    F_{L_1 + L_2}(x) = \prob(L_1 + L_2\leq x)\geq \prob\left(\max\{L_1, L_2\}\leq x/2\right) = \left(F_L\left(x/2\right)\right)^2.
    \]
    Hence, for any $z>0$, it holds that $
    F_{L_1+L_2}^{(-1)}(z) \leq 2 F_L^{(-1)}(\sqrt{z}).
    $
    Applying this to the rhs of \eqref{eq:upperbound_before_remove_convolution} obtains for a different constant $c$
    \begin{align}
      d_L^{(t)}(q', \mathcal{L}_{K_t})
      \leq
      \begin{dcases}
        2\sum_{k\in[K_t]}F_{L}^{(-1)}
            \Big(
            \exp\left(-c\log(s_0)(\tau-2)^{-k}(k+1)^{-2}\right)
            \Big), &\text{(FV),}\\
        2\sum_{k\in[K_t]}F_{L}^{(-1)}
            \Big(
            \exp\left(-c\log(s_0)(1-\varepsilon_G)^k(\tau-2)^{-k}\varepsilon_G\right)
            \Big), &\text{(G).}
      \end{dcases}
      \label{eq:upperbound_sum_before_int111}
    \end{align}
    For technical convenience, we define $a:=\inf\{x:F_L^{(-1)}(x)>0\}$ and $L' := L - a$,
    so that for (FV)
    \begin{align}
      d_L^{(t)}(q', \mathcal{L}_{K_t})
      \leq
        2aK_t +
        2\sum_{k\in[K_t]}F_{L'}^{(-1)}
            \Big(
            \exp\left(-c\log(s_0)(\tau-2)^{-k}(k+1)^{-2}\right)
            \Big).
      \label{eq:upperbound_sum_before_int}
    \end{align}
     Observe that for monotone non-increasing functions $g$, $g(1)<\infty$
     \begin{equation}
         \sum_{k=\lfloor a\rfloor}^{\lfloor b\rfloor} g(k) \overset{(\ast)}{\geq} \int_a^{b} g(x) \mathrm{d}x, \qquad
         \int_a^b g(x)\mathrm{d}x \overset{(\star)}{\geq} \sum_{k=\lceil a\rceil + 1}^{\lfloor b\rfloor}g(k).
         \label{eq:int_sum_bounds}
     \end{equation}
    We apply $(\star)$  to switch in \eqref{eq:upperbound_sum_before_int} from a sum to an integral.
    \noindent We discuss FPA and VPA under condition \eqref{eq:tightness_cond} first. We apply the variable transformation
    \begin{equation}\label{eq:upperbound_var_trans_tight2}
    \frac{y}{2} =x + \frac{\log\log s_0+\log\left(c\right)-2\log(x+1)}{|\log\left(\tau-2\right)|},
    \end{equation}
    that has a solution for all $x\geq1$ if $s_0$ is sufficiently large. Now the integrand matches the summands of $Q_t$ in \eqref{eq:k_ast_qt}.
    Differentiating both sides and rearranging terms gives an implicit formula for $\mathrm{d}x$,
    that by \eqref{eq:upperbound_var_trans_tight2} can be bounded by a function that only depends on $y$ for some $C>0$, i.e.,
    \begin{equation}\label{eq:upperbound_var_trans_tight3}
    \mathrm{d}x
    = \frac{1}{2}\left(1 + \frac{2/|\log\left(\tau-2\right)|}{x+1-2/|\log\left(\tau-2\right)|}\right)\mathrm{d}y
    \leq \frac{1}{2}\left(1+\frac{C}{y}\right).
    \end{equation}
    Thus, \eqref{eq:upperbound_sum_before_int} and \eqref{eq:upperbound_var_trans_tight3} together yield
    \begin{align*}
    d_L^{(t)}(q', \mathcal{L}_{K_t})
    &\leq
    2aK_t +
    \int_{h_\tau(s_0)}^{h_\tau(s_0)+2K_t-\frac{4}{\log(K_t+1)}{|\log\left(\tau-2\right)|}}
    \left(1+\frac{C}{y}\right)
    F_{L'}^{(-1)}
        \left(
        \exp\left(-\left(\tau-2\right)^{-y/2}\right)
        \right)\mathrm{d}y.
    \end{align*}
    where $h_\tau(s_0) = 2(\log\log s_0 + \log\left(c\right))/|\log\left(\tau-2\right)|$
    and $s_0$ is chosen so large that \eqref{eq:upperbound_var_trans_tight3} holds for all $x, y$ in the integration domain.
    When condition \eqref{eq:tightness_cond} on $L$ holds, there exists $M>0$, such that
    \begin{align*}
      \int_{h_\tau(s_0)}^{h_\tau(s_0)+2K_t}
      \frac{C}{y}
      F_{L'}^{(-1)}
          \left(
          \exp\left(-\left(\tau-2\right)^{-y/2}\right)
          \right)\mathrm{d}y
      < M.
    \end{align*}
    We apply $(\ast)$ in \eqref{eq:int_sum_bounds} to switch back to a sum. As this sum contains at most $2K_t + 1$ terms, there exists a larger $M$, recalling that $L=L'+a$, so that
    \begin{align*}
    d_L^{(t)}(q', \mathcal{L}_{K_t})
    &\leq
    2aK_t +
    \sum_{k=\lfloor h_{\tau}(s_0)\rfloor}^{\lfloor h_\tau(s_0)+2K_t\rfloor}
    F_{L'}^{(-1)}
        \left(
        \exp\left(-\left(\tau-2\right)^{-y/2}\right)
        \right)
    +M \\
    &\leq
    \sum_{k=\lfloor h_{\tau}(s_0)\rfloor}^{\lfloor h_\tau(s_0)+2K_t\rfloor}
    F_{L}^{(-1)}
        \left(
        \exp\left(-\left(\tau-2\right)^{-k/2}\right)
        \right)
    +M.
    \end{align*}
    Application of the bound \eqref{eq:claim_sk_kt_kast} on $2K_t$ yields the assertion \eqref{eq:weighted_distance_in_t}
    for FPA and VPA under \eqref{eq:tightness_cond}.

    For FPA and VPA if \eqref{eq:tightness_cond} does not hold, and for the model GVPA, we use a similar variable transformation to \eqref{eq:upperbound_var_trans_tight3} for the integral in \eqref{eq:upperbound_sum_before_int}, i.e.,
    \begin{equation}\nonumber
    \frac{y}{2}\frac{|\log\left(\tau-2\right)|}{\log\left((1-\varepsilon_G)/(\tau-2)\right)}
    = x + \frac{\log\log s_0+\log\left(c\varepsilon_G\right)}{\log\left((1-\varepsilon_G)/(\tau-2)\right)},
    \end{equation}
    which yields combined with the second line in \eqref{eq:upperbound_sum_before_int111}
    \[
    d_L^{(t)}(q', \mathcal{L}_{K_t})
    \leq
    (1+\varepsilon_{\ref{prop:weighted_distance_in_t}})
    \int_{\tilde{h}_{\tau}(s_0)}^{\tilde{h}_\tau(s_0) + \frac{2}{1+\varepsilon_{\ref{prop:weighted_distance_in_t}}}K_t}
    F_{L}^{(-1)}
          \left(
          \exp\left(-\left(\tau-2\right)^{-x}\right)
          \right)\mathrm{d}x,
    \]
    where
    $
    \tilde{h}_\tau(s_0) =  2\left(\log\log s_0+\log\left(c\varepsilon_G\right)\right)/|\log\left(\tau-2\right)|.
    $
    After applying the bound \eqref{eq:claim_sk_kt_kast_general} on $K_t$ and switching back to a sum using $(\ast)$ from \eqref{eq:int_sum_bounds},
    we obtain the desired bound \eqref{eq:weighted_distance_in_t_general}.
\end{proof}
 This establishes all prerequisites. We turn to the  upper bound for the conservative case.
\subsection{Conservative case}
\begin{proof}[Proof of Proposition \ref{prop:upper_bound}]
  We first show the result for FPA and VPA under \eqref{eq:tightness_cond}.
  At the end of this proof, we argue how to adapt the proof for GVPA and VPA or FPA if \eqref{eq:tightness_cond} does not hold.
  Fix  $\delta>0$, and let $\delta_{\ref{prop:graph_distance_to_high_degree}}=\delta_{\ref{prop:weighted_distance_in_t}}=\delta/8$,
  $\alpha=1-\delta/16$.
We can choose $t$ sufficiently large,
such that
\begin{enumerate}[label=(\roman*),leftmargin=2\parindent]
  \item by Proposition \ref{prop:graph_distance_to_high_degree}
  for $s_{\ref{prop:graph_distance_to_high_degree}}=s_0$ that we choose below in (ii),
  there exists a constant $C_{\ref{prop:graph_distance_to_high_degree}}=C_{\ref{prop:graph_distance_to_high_degree}}(\delta, s_0)$,
  such that for $q\in\{u,v\}$ there is a vertex $q'\in[\alpha t]$ with degree at least $s_0$
  within graph distance $C_{\ref{prop:graph_distance_to_high_degree}}$ w/p at least $1-\delta/8$.
  \item by Proposition \ref{prop:weighted_distance_in_t}
there is an $s_0=s_0(\delta, \varepsilon_{\ref{prop:weighted_distance_in_t}})>0$, such that for  $q'\in[\alpha t]$ with $D_{\alpha t}(q')\geq s_0$, the weighted distance to
the inner core is not too large.
As the terms in the sum in \eqref{eq:weighted_distance_in_t}  are decreasing, we shift the summation bounds to match the bounds from $Q_t$ in \eqref{eq:k_ast_qt}, so that
\begin{equation}
  \prob\Big(d_L^{(t)}(q',\mathrm{Inner}_{\alpha})\geq M_{\ref{prop:weighted_distance_in_t}}+Q_t\Big) \leq \frac{\delta}{8}.
  \label{eq:prop_proof_distance_to_inner_t}
\end{equation}
\item by Proposition \ref{prop:innercore_bounded} for $\delta_{\ref{prop:innercore_bounded}}=\delta/8$,
the graph distance between $w_1, w_2\in \mathrm{Inner}_{\alpha}$  is smaller than $C_{\ref{prop:innercore_bounded}}$
w/p at least $1-\delta/8$.
\item for $(L_j)_{j\geq0}$ i.i.d.\ copies of $L$,
the sum of constantly many weights is negligible compared to the diverging sequence $Q_{t}$ defined in \eqref{eq:k_ast_qt}, i.e.,
there exists $M'>0$ such that
\begin{equation}
\prob\bigg(\sum_{j\in[2C_{\ref{prop:graph_distance_to_high_degree}}+C_{\ref{prop:innercore_bounded}}]} L_j\geq M'\bigg)\leq \frac{\delta}{8}.
\label{eq:prop_proof_finite_sum_bounded}
\end{equation}
\end{enumerate}
Conditionally on the intersection of the complements of the events in
\eqref{eq:prop_proof_distance_to_inner_t} and (i) for $q\in\{u,v\}$,
we can construct greedy paths from $u$ and $v$ to the inner core. Hence,
\begin{equation}
\prob\bigg(\bigcup_{q\in\{u,v\}}\Big\{d_L^{(t)}(q,\mathrm{Inner}_\alpha)\geq
Q_{t} + M_{\ref{prop:weighted_distance_in_t}} + \sum_{j\in[C_{\ref{prop:graph_distance_to_high_degree}}]} L_j^{(q)} \ \Big \vert\  q<\alpha t\Big\}\bigg)
\leq\frac{3\delta}{4},
\label{eq:distance_q_to_inner}
\end{equation}
where the last sum between brackets in \eqref{eq:distance_q_to_inner} represents the weighted distance
from $q$ to some vertex with degree at least $s_0$.
By (iii), we can bound the distance between two vertices in the inner core,
and thus we have constructed a path from $u$ to $v$.
Hence, by a union bound over the events in (iii) and \eqref{eq:distance_q_to_inner},
we can bound the total weight on this path, i.e.,
\begin{equation}
\prob\bigg(d_L^{(t)}(u,v)\geq
2Q_{t}  + \sum_{j\in[2C_{\ref{prop:graph_distance_to_high_degree}}+C_{\ref{prop:innercore_bounded}}]} L_j \ \Big \vert \  u,v<\alpha t\bigg)
\leq\frac{7\delta}{8}.
\label{eq:prop_proof_qt_and_finite_sum}
\end{equation}
Combining \eqref{eq:prop_proof_finite_sum_bounded} with \eqref{eq:prop_proof_qt_and_finite_sum}
yields by our choice of $\alpha = 1-\delta/16$ that
\begin{equation}
\prob\left(d_L^{(t)}(u,v)\geq 2Q_{t} + 2M_{\ref{prop:weighted_distance_in_t}} + M'\right)\leq \delta,
\label{eq:prop_proof_qt_result}
\end{equation}
finishing the proof for FPA and VPA under \eqref{eq:tightness_cond}.
For the more general case, only \eqref{eq:prop_proof_distance_to_inner_t} does not hold, which
can be replaced by
\begin{equation*}
  \prob\Big(d_L^{(t)}(q',\mathrm{Inner}_{\alpha})\geq (1+\varepsilon_{\ref{prop:weighted_distance_in_t}})Q_t\Big) \leq \frac{\delta}{8}.
\end{equation*}
Propagating the rhs between brackets through \eqref{eq:distance_q_to_inner} and  \eqref{eq:prop_proof_qt_result} obtains the result for GVPA.
\end{proof}
\subsection{Explosive case}
Similarly to the conservative case we create a path from $u$ to $v$ and use the total weight on the path as an upper bound
for the weighted distance. Again, by applying Propositions \ref{prop:graph_distance_to_high_degree}, \ref{prop:weighted_distance_in_t}, and \ref{prop:innercore_bounded},
this path goes from $q\in\{u,v\}$ to a vertex $q'$ with degree at least $s_0$, after which we connect this vertex to the inner core. The total weight on the segments to the inner core can be made arbitrarily small for large $t$ and $s_0$.
However, it is not straightforward to see that the weight on the first parts of the paths converge to the explosion times of two $\mathrm{LWLs}$, as we increase the degree $s_0$.
We start by showing that the explosion time of the LWL is finite, for which we use Propositions \ref{prop:graph_distance_to_high_degree} and \ref{prop:weighted_distance_in_t}. Recall the formal statement of Theorem \ref{theorem:explosion_time_ppg}.
\begin{proof}[Proof of Theorem \ref{theorem:explosion_time_ppg}]
Fix $\delta>0$. Recall $K_t$ from \eqref{eq:kt} and $C_{\ref{prop:graph_distance_to_high_degree}}$ for $\delta_{\ref{prop:graph_distance_to_high_degree}}=\delta/4$
from Proposition \ref{prop:graph_distance_to_high_degree}.
Let $a(t) := \min\{\kappa(t), 2K_t+C_{\ref{prop:graph_distance_to_high_degree}}\}$,
where $\kappa(t)$ denotes the maximum number of generations in the LWL to maintain a coupling with $\mathrm{PA}_t$
w/p at least $1-\delta/4$, from \eqref{eq:local_weak_limit}.
As $\kappa(t)$ and $K_t$ tend to infinity with $t$, the same holds for $a(t)$.
Denote the first $k$ generations of the LWL rooted in $\circledcirc$ by $\mathrm{LWL}_k(\circledcirc)$. Define
\begin{align*}
  X_t &:= d_L^{(t)}\left(q^{(t)}, \partial \mathcal{B}_G^{(t)}(q^{(t)}, 2K_t +C_{\ref{prop:graph_distance_to_high_degree}})\right), \\
  Y_t &:= d_L^{(t)}\left(q^{(t)}, \partial \mathcal{B}_G^{(t)}(q^{(t)}, a(t))\right)\mathds{1}_{\{\text{coupling succesful}\}}=\beta_{a(t)}^{\mathrm{LWL}_{a(t)}\left(q^{(t)}\right)}\mathds{1}_{\{\text{coupling succesful}\}}.
\end{align*}
Observe that $X_t$ can be viewed as an upper bound of the weighted distance to $\mathrm{Inner}_\alpha$.
By the choice of $a(t)$, $X_t$ stochastically dominates $Y_t$, i.e., $\prob(Y_t \leq X_t)=1$.
Consider the subsequence of times, defined recursively as
\begin{equation}
  t_i := \begin{dcases}
  1, &\, i=0,\\
  \min\{t: a(t) > a(t_{i-1}) \text{ and } K^\ast_{t}>K^\ast_{t_{i-1}}\},&\, i>0.
\end{dcases}\nonumber\end{equation}
By construction of $Y_t$ and $(t_i)_{i\geq0}$,
$
\prob(\beta_{\infty}^{\mathrm{LWL}}\leq y) \,=\, \lim_{i\rightarrow\infty} \prob(Y_{t_i}\leq y).
$
Since $X_{t_i}$ dominates $Y_{t_i}$,
\begin{align}
\prob(\beta_{\infty}^{\mathrm{LWL}}=\infty)
=
\lim_{M\rightarrow\infty}\prob(\beta_{\infty}^{\mathrm{LWL}}\geq M)
=
\lim_{M\rightarrow\infty}\lim_{i\rightarrow\infty}\prob(Y_{t_i}\geq M)
\leq
\lim_{M\rightarrow\infty}\lim_{i\rightarrow\infty}\prob(X_{t_i}\geq M).
\label{eq:y_infty_bound_by_xt}
\end{align}
Below we find a bound on $\prob(X_{t_i}\geq M)$ that does not depend on $t$ or $i$ to obtain the result.
Recall $Q_t$ from \eqref{eq:k_ast_qt} and define
$
Q_{\infty}:= \lim_{t\rightarrow\infty} Q_t,
$
which is finite since $I(L)<\infty$.
By a union bound on the events in Proposition \ref{prop:graph_distance_to_high_degree}
and Proposition \ref{prop:weighted_distance_in_t}, for any $\delta, \varepsilon>0$, if $i$ is sufficiently large,
\[
\prob\bigg(X_{t_i} \geq (1+\varepsilon)Q_{t_i} + \sum_{j\in[C_{\ref{prop:graph_distance_to_high_degree}}]} L_j\bigg)\leq \frac{\delta}{2}.
\]
Note that $(1+\varepsilon)Q_{t_i} \leq\ (1+\varepsilon)Q_{\infty}$.
Choose $M'=M'(\delta)$ so that $M'/2\geq(1+\varepsilon)Q_\infty$ and $\sum_{j\in[C_{\ref{prop:graph_distance_to_high_degree}}]} L_j\geq M'/2$ w/p at most $\delta/2$, similarly to \eqref{eq:prop_proof_finite_sum_bounded}.
Hence,
\[
\lim_{i\rightarrow\infty}\prob(X_{t_i}\geq M') \leq \delta.
\]
Recall \eqref{eq:y_infty_bound_by_xt} to obtain
$
\prob(\beta_{\infty}^{\mathrm{LWL}}=\infty)\leq \delta.
$
Since $\delta>0$ was arbitrary, $\beta_{\infty}^{\mathrm{LWL}}<\infty$ almost surely.
\end{proof}
Recall the definitions of the graph neighbourhoods in Definition \ref{def:graph_distances_1} and the explosion time in Definition \ref{def:explosive_graph}.
Using the finiteness of the explosion time, we bound the weight on a path to a vertex with degree at least $s_0$.
To do so, we need the following general lemma.
\begin{lemma}[Reaching a high-degree vertex in an explosive tree]\label{lemma:no_explosion_bounded_degrees}
  Consider a (possibly random) locally finite tree $T$, rooted in $\circledcirc$, where every edge is equipped with an i.i.d.\ edge-weight from distribution $L$,
  where $F_L(0)=0$, such that $T$ is explosive. Write $D(x)$ for the degree of vertex $x$.
  Fix $\delta_{\ref{lemma:no_explosion_bounded_degrees}}>0$. For any $s\in\mathbb{N}$, there exists $N=N(\delta_{\ref{lemma:no_explosion_bounded_degrees}}, s)<\infty$ such that
  \begin{equation}
  \prob\left(\mathcal{B}_L^T\left(\circledcirc, \sigma_N^T\right)\cap \{x\in T: D(x)\geq s\}\neq \varnothing\right) \,\geq\, 1-\delta_{\ref{lemma:no_explosion_bounded_degrees}}.
  \label{eq:no_explosion_bounded_degrees}
  \end{equation}
  \begin{proof}
    The event in \eqref{eq:no_explosion_bounded_degrees} means that among the $N$ closest vertices to $\circledcirc$, there is a vertex with degree at least $s$.
    We argue by contradiction.
    If the tree $T$ is explosive and the vertices contained in $\bigcup_{n\in\mathbb{N}}\mathcal{B}_L(\circledcirc, \sigma_n)$ would all have degree at most $s$,
    then the forest restricted to vertices in $T$ with degree at most $s$ is also explosive. This forest consists of trees with at most exponentially growing generation sizes. Hence, \cite[Lemma 4.3]{komjathy2018explosion} applies and explosion is impossible,
    \noindent i.e.,
    \[
    \prob\bigg(\bigcap_{n\geq1}\left\{\mathcal{B}_L^T\left(\circledcirc, \sigma_n^T\right)\cap \{x\in T: D(x)\geq s\}= \varnothing\right\}\bigg) = 0.
    \]
    From here we obtain the result since  $\bigcup_{n\in[N]}\mathcal{B}_L^T\left(\circledcirc, \sigma_n^T\right)=\mathcal{B}_L^T\left(\circledcirc, \sigma_N^T\right)$:
    \begin{align*}
      \lim_{N\rightarrow\infty} &\prob\bigg(\left\{\mathcal{B}_L^T\left(\circledcirc, \sigma_N^T\right)\cap \{x\in T: D(x)\geq s\}\neq \varnothing\right\}\bigg) \\
      &= \lim_{N\rightarrow\infty} \prob\bigg(\bigcup_{n\in[N]}\left\{\mathcal{B}_L^T\left(\circledcirc, \sigma_n^T\right)\cap \{x\in T: D(x)\geq s\}\neq \varnothing\right\}\bigg) = 1,
    \end{align*}
    hence, the lhs will have probability at least $1-\delta_{\ref{lemma:no_explosion_bounded_degrees}}$ for a sufficiently large $N$.
  \end{proof}
\end{lemma}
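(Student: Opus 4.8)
The plan is to prove the stronger statement $\prob\big(\bigcup_{N\ge1}E_N\big)=1$, where $E_N:=\{\mathcal B_L^T(\circledcirc,\sigma_N^T)\cap\{x:D(x)\ge s\}\neq\varnothing\}$. Since $\sigma_N^T$ is non-decreasing in $N$, the balls $\mathcal B_L^T(\circledcirc,\sigma_N^T)$ are nested and the events $E_N$ are increasing, so $\prob(E_N)\uparrow\prob\big(\bigcup_N E_N\big)=1$, which gives $\prob(E_N)\ge1-\delta_{\ref{lemma:no_explosion_bounded_degrees}}$ for $N$ large, i.e.\ the assertion. Equivalently I will show $\prob\big(\bigcap_N E_N^c\big)=0$, and note that on $\bigcap_N E_N^c$ every vertex of $\bigcup_N\mathcal B_L^T(\circledcirc,\sigma_N^T)$ has degree strictly less than $s$.

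Next I would isolate the relevant subtree. Because $T$ is a tree with a.s.\ strictly positive edge weights (since $F_L(0)=0$), every ball $\mathcal B_L^T(\circledcirc,r)$ is connected and closed under taking ancestors: if $v$ lies in it, so does every vertex on the unique $\circledcirc$--$v$ path, being at least as close. Hence on $E_N^c$ the set $\mathcal B_L^T(\circledcirc,\sigma_N^T)$ is a connected, ancestor-closed set of degree-$<s$ vertices containing $\circledcirc$, so it is contained in $T_{<s}$, the connected component of $\circledcirc$ in the subgraph of $T$ induced on $\{x:D(x)<s\}$. This $T_{<s}$ is again a locally finite tree rooted at $\circledcirc$, its parent relation agrees with that of $T$, every vertex has at most $s-1$ children, so generation $k$ has at most $(s-1)^k$ vertices, and its edges carry i.i.d.\ $L$-weights (the shape of $T_{<s}$ is a measurable function of the shape of $T$ alone, independent of the weights). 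Moreover, on $\bigcap_N E_N^c$ we get $\bigcup_N\mathcal B_L^T(\circledcirc,\sigma_N^T)\subseteq T_{<s}$, and since weighted distances inside $T_{<s}$ coincide with those in $T$, the ball $\mathcal B_L^{T_{<s}}(\circledcirc,r)$ contains at least $N$ vertices for every $N$ once $r\ge\sup_N\sigma_N^T$.

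It remains to control $\sup_N\sigma_N^T$. As $T$ is explosive a.s., for every $k$ there is a vertex $v_k$ at graph distance $k$ with $d_L^T(\circledcirc,v_k)=\beta_k^{(T)}\le\beta_\infty^{(T)}<\infty$; the $v_k$ are pairwise distinct, so $|\mathcal B_L^T(\circledcirc,\beta_\infty^{(T)})|=\infty$, whence $\sigma_N^T\le\beta_\infty^{(T)}$ for all $N$. Combining this with the previous paragraph, on $\bigcap_N E_N^c$ the tree $T_{<s}$ contains infinitely many vertices within weighted distance $\beta_\infty^{(T)}<\infty$ of its root, and then König's lemma (as $T_{<s}$ is locally finite) produces an infinite ray inside that ball, so $T_{<s}$ is explosive. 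But a tree with at most exponentially growing generation sizes and i.i.d.\ edge weights $L$ with $F_L(0)=0$ is conservative almost surely — this is \cite[Lemma 4.3]{komjathy2018explosion}; one quick way to see it is a union bound over the at most $(s-1)^k$ depth-$k$ vertices together with a Chernoff estimate, $\prob(\beta_k^{(T_{<s})}\le t)\le(s-1)^k\,e^{\lambda t}\,\E[e^{-\lambda L}]^k$, which tends to $0$ as $k\to\infty$ once $\lambda$ is chosen with $(s-1)\,\E[e^{-\lambda L}]<1$, possible since $\E[e^{-\lambda L}]\to\prob(L=0)=0$ as $\lambda\to\infty$. Conditioning on the shape of $T$ and integrating, this forces $\prob\big(\bigcap_N E_N^c\big)=0$, completing the argument.

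The routine parts are the Chernoff bound and the monotone-limit reduction. The step that needs care is the middle one: checking that the $\sigma_N^T$-balls really assemble into the bounded-branching subtree $T_{<s}$ with matching weighted distances, and that explosion of $T$ (via $\sup_N\sigma_N^T\le\beta_\infty^{(T)}<\infty$) is genuinely transferred to $T_{<s}$, so that the no-explosion input can be invoked to reach the contradiction.
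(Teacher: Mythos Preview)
Your proof is correct and follows essentially the same route as the paper's: reduce via monotonicity to showing $\prob\big(\bigcap_N E_N^c\big)=0$, argue that on this event the bounded-degree subtree rooted at $\circledcirc$ inherits explosion from $T$, and then invoke \cite[Lemma 4.3]{komjathy2018explosion} for the contradiction. You supply considerably more detail than the paper does (ancestor-closure of the weighted balls, the K\"onig's lemma step to transfer explosion, and a Chernoff sketch of the cited lemma), but the architecture is identical.
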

In the next lemma, we exploit the coupling to LWL to find a vertex with sufficiently high degree.
This lemma can be viewed as an extension of Proposition \ref{prop:graph_distance_to_high_degree}.
\begin{lemma}[Weighted distance to a vertex with degree at least $s$]\label{lemma:weighted_distance_to_high_degree}
  Consider PA under the same conditions as Theorem \ref{thm:weighted_distance_explosive} at time $t$.
Let $q$ be a typical vertex. For any $\delta_{\ref{lemma:weighted_distance_to_high_degree}}, s_0>0$, there exists $N_{\ref{lemma:weighted_distance_to_high_degree}}\in\mathbb{N}$, such that when $t$ is sufficiently large
\[
\prob\bigg(\mathcal{B}_L^{(t)}\left(q, \sigma_{N_{\ref{lemma:weighted_distance_to_high_degree}}}^{(t)}\right)\cap \{x\in[t]:D_t(x)\geq s_0\}= \varnothing\bigg) \leq \delta_{\ref{lemma:weighted_distance_to_high_degree}}.
\]
\begin{proof}
  From Lemma \ref{lemma:no_explosion_bounded_degrees} for $\delta_{\ref{lemma:no_explosion_bounded_degrees}}=\delta/2$, we obtain that in the limiting object $\mathrm{LWL}(\circledcirc)$ for a root $\circledcirc$,
  there is an $N$ such that a vertex of degree at least $s_0$ is reached before time $\sigma_N$ w/p at least $1-\delta/2$. By Theorem \ref{theorem:explosion_time_ppg} the LWL is explosive.
  By applying Proposition \ref{prop:local_weak_limit}, we can assume
  $t$ is so large that we can maintain a coupling between w/p at least $1-\delta/2$ up to graph distance $N$,
  so that $\widetilde{\mathcal{B}}^{(t)}_G(q, N)\simeq \mathrm{LWL}_N(\circledcirc)$.
  As the $N$-th closest vertex in $L$-distance is within the $G$-neighbourhood of size $N$, we obtain the result.
\end{proof}
\end{lemma}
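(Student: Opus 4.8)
\emph{Proof proposal.}
The plan is to derive this from the corresponding statement in the local weak limit and then transport it to $\mathrm{PA}_t$ via Proposition~\ref{prop:local_weak_limit}. First I would work in the $\mathrm{LWL}$ rooted at $\circledcirc$. Since $I(L)<\infty$, Theorem~\ref{theorem:explosion_time_ppg} says the $\mathrm{LWL}$ is explosive, and $I(L)<\infty$ also forces $F_L(x)>0$ for every $x>0$; so with $F_L(0)=0$ (the degenerate case $\prob(L=0)>0$ being treated separately) Lemma~\ref{lemma:no_explosion_bounded_degrees} applies to the tree $T=\mathrm{LWL}$ with error level $\delta_{\ref{lemma:weighted_distance_to_high_degree}}/2$ and degree threshold $s_0$, producing an integer $N$ with
\[
\prob\Big(\mathcal{B}_L^{\mathrm{LWL}}\big(\circledcirc,\sigma_N^{\mathrm{LWL}}\big)\cap\{x\in\mathrm{LWL}: D(x)\ge s_0\}\ne\varnothing\Big)\ \ge\ 1-\tfrac{\delta_{\ref{lemma:weighted_distance_to_high_degree}}}{2}.
\]
I would then set $N_{\ref{lemma:weighted_distance_to_high_degree}}:=N$.

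The key point I then need is locality: in a tree $\mathcal{B}_L(\circledcirc,\sigma_N)$ is a finite connected subtree containing $\circledcirc$, so every vertex of it — in particular any witness vertex $x$ with $D(x)\ge s_0$, together with all its incident edges — lies inside the edge-weighted graph-ball $\widetilde{\mathcal{B}}_G^{\mathrm{LWL}}(\circledcirc,R)$ for some (a.s.\ finite, random) radius; choosing $R=R(N,\delta_{\ref{lemma:weighted_distance_to_high_degree}})$ large enough, this inclusion and the event displayed above together hold with probability at least $1-\delta_{\ref{lemma:weighted_distance_to_high_degree}}/2$, and the resulting event is a function only of the finite rooted edge-weighted neighbourhood $\widetilde{\mathcal{B}}_G^{\mathrm{LWL}}(\circledcirc,R)$. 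Now I would invoke Proposition~\ref{prop:local_weak_limit}: for $t$ large enough one may couple so that $\widetilde{\mathcal{B}}_G^{(t)}(q,R)$ is rooted-isomorphic to $\mathrm{LWL}_R(\circledcirc)$ with probability at least $1-\delta_{\ref{lemma:weighted_distance_to_high_degree}}/2$, and since the edge weights are i.i.d.\ on both sides the isomorphism may be taken to preserve edge weights as well. On the intersection of the coupling event with the $\mathrm{LWL}$ event, the image of $x$ in $\mathrm{PA}_t$ lies in $\mathcal{B}_L^{(t)}(q,\sigma_N^{(t)})$ and has degree at least $s_0$, because graph distances, $L$-distances and the degrees of all vertices within $\widetilde{\mathcal{B}}_G(\cdot,R)$ are preserved by the isomorphism. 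A union bound over the two failure events of probability $\le\delta_{\ref{lemma:weighted_distance_to_high_degree}}/2$ then yields the assertion.

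I do not expect a deep obstacle: granted Theorem~\ref{theorem:explosion_time_ppg} and Lemma~\ref{lemma:no_explosion_bounded_degrees}, this is purely a transfer through local weak convergence. The points that need care are (i) ensuring that the $L$-metric ball $\mathcal{B}_L(\circledcirc,\sigma_N)$ and the degrees of its vertices are determined by a \emph{bounded} graph-neighbourhood — which is exactly why I would couple on a graph-ball of radius $R$ somewhat larger than $N$, rather than $N$ itself — and (ii) the harmless handling of ties in $L$-distance arising from the definition $\sigma_N=\inf\{r:|\mathcal{B}_L(\circledcirc,r)|\ge N\}$ and of a possible atom of $L$ at $0$.
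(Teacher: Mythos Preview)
Your argument is correct and matches the paper's approach: apply Lemma~\ref{lemma:no_explosion_bounded_degrees} to the explosive $\mathrm{LWL}$ (via Theorem~\ref{theorem:explosion_time_ppg}) to obtain $N$, then transport the conclusion to $\mathrm{PA}_t$ through the coupling of Proposition~\ref{prop:local_weak_limit} and take a union bound. The only difference is that the paper bypasses your auxiliary radius $R$ by observing directly that the $N$ closest vertices in $L$-distance all lie within graph distance $N$ of the root (since the $L$-geodesic to the $k$-th closest vertex can only pass through strictly closer vertices), so coupling on the graph-ball of radius $N$ already determines both $\mathcal{B}_L(\circledcirc,\sigma_N)$ and the degrees of its members; your more cautious choice of $R$ is correct but unnecessary.
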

\noindent
We are now ready to prove the upper bound for the explosive case.
\begin{proof}[Proof of Proposition \ref{prop:upperbound_explosive}]
  Let $s_0$ be the constant we obtain from Proposition \ref{prop:weighted_distance_in_t}
  when setting
  $ \delta_{\ref{prop:weighted_distance_in_t}}=\delta/8$, $\alpha_{\ref{prop:weighted_distance_in_t}}=1-\delta/16$ and
  $\varepsilon_{\ref{prop:weighted_distance_in_t}}=\varepsilon/3$,
  such that the sum in \eqref{eq:weighted_distance_in_t} is smaller than $\varepsilon/6$.
  Abbreviate $t':=\lfloor\alpha t\rfloor$.
  Observe that
  \begin{equation}
  \prob\left(\{u\notin[t']\}\cup\{v\notin[t']\}\right)\leq \delta / 8.
  \label{eq:event_uv_young_prob}
  \end{equation}
  We call the above event between brackets $\mathcal{E}_1$. Recall $\sigma_n$ from \eqref{eq:sigma_n}.
  Let for $q\in\{u,v\}$
  \begin{equation*}
    N^\ast(q)=\min\{n: \exists x\in \mathcal{B}_L^{(t')}(q,\sigma_n^{(t')}):D_{t'}(x)\geq s_0\}.
  \end{equation*}
  Let $u'$ be the vertex in $\mathcal{B}_L^{(t')}\big(u,\sigma_{N^\ast}^{(t')}(u)\big)$ that has degree at least $s_0$,
  and $v'$ analogously.
  The key observation is that, conditionally on $\mathcal{E}_1^c$, we can use a sprinkling argument and look at the graph at two moments in time, i.e.,
  \[
  d_L^{(t)}(u,v) \leq d_L^{(t')}(u, u') + d_L^{(t')}(v, v') + d_L^{(t)}(u', v').
  \]
  The above is true, since for \emph{fixed} vertices the weighted distance between them is non-increasing in time.
  Conditionally on the event $\mathcal{E}_1^c$, $u$ and $v$ are uniform vertices in $[t']$,
  so that we can still apply Lemma \ref{lemma:weighted_distance_to_high_degree} for $s_0$ and $\delta_{\ref{lemma:weighted_distance_to_high_degree}}=\delta/8$,
  i.e., for $t$ sufficiently large and some $N_{\ref{lemma:weighted_distance_to_high_degree}}=N_{\ref{lemma:weighted_distance_to_high_degree}}(\delta, s_0)$,
  \begin{equation}
  \prob\bigg(\bigcup_{q\in\{u,v\}}\left\{N^\ast(q) \geq N_{\ref{lemma:weighted_distance_to_high_degree}}\right\}\bigg)\leq \frac{\delta}{4}.
  \label{eq:event_q_close_large_degree}
  \end{equation}
  We call the  above event between the $\prob$-sign $\mathcal{E}_2$.
  By the choice of $u'$ and $v'$, we obtain that
  \begin{align*}
  \prob\left(d_L^{(t)}(u,v)
  \leq
  \sigma_{N_{\ref{lemma:weighted_distance_to_high_degree}}}^{(t')}(u) + \sigma_{N_{\ref{lemma:weighted_distance_to_high_degree}}}^{(t')}(v)+ d_L^{(t)}(u', v') \mid \mathcal{E}_1^c\cap \mathcal{E}_2^c\right) = 1.
  \end{align*}
  Assume $t'$ is so large that we can maintain a coupling with LWL w/p at least $1-\delta/8$ for $q\in\{u,v\}$ up to generation $N_{\ref{lemma:weighted_distance_to_high_degree}}$,
  possible by Propositions \ref{prop:local_weak_limit}.
  If the coupling is successful, then
  $\sigma_{N_{\ref{lemma:weighted_distance_to_high_degree}}}^{(t')}(q)\leq\beta_{N_{\ref{lemma:weighted_distance_to_high_degree}}}^{\mathrm{LWL}^{(q)}},$
  for $q\in\{u,v\}$, where the two random variables are independent copies of $\beta^{\mathrm{LWL}}_{N_{\ref{lemma:weighted_distance_to_high_degree}}}$. Thus, combining the coupling error and the error probabilities on $\mathcal{E}_1$ and $\mathcal{E}_2$ in \eqref{eq:event_uv_young_prob} and \eqref{eq:event_q_close_large_degree},
  \begin{equation}
    \prob\left(d_L^{(t)}(u,v)\leq\beta_{N_{\ref{lemma:weighted_distance_to_high_degree}}}^{\mathrm{LWL}^{(u)}} + \beta_{N_{\ref{lemma:weighted_distance_to_high_degree}}}^{\mathrm{LWL}^{(v)}} + d_L^{(t)}(u', v')\right)
    \geq 1-5\delta/8,
    \label{eq:theorem_explosive_upperbound_distance_to_ppg_1}
  \end{equation}
  where $\prob$ denotes the probability measure on the coupled probability space.
  Recall \eqref{eq:weighted_distance_explosive_upperbound}, we observe that we are left to prove
  \begin{equation}
  \prob\left(d_L^{(t)}(u',v')\geq \varepsilon\right) \leq 3\delta/8.
  \label{eq:theorem_explosive_upperbound_uv_large_degree}
  \end{equation}
  By our choice of $\delta_{\ref{prop:weighted_distance_in_t}}=\delta/8$ and $\varepsilon_{\ref{prop:weighted_distance_in_t}}=\varepsilon/3$
  above, we obtain by Proposition \ref{prop:weighted_distance_in_t}
  \begin{equation}\label{eq:upper-explosive-bound-q-prime}
  \prob\bigg(\bigcup_{q'\in\{u',v'\}}\big\{d_L^{(t)}(q', \mathrm{Inner}_{\alpha}) \geq \varepsilon/3 \big\}\bigg) \leq \delta/4.
  \end{equation}
  It is important to note that although $u'$ and $v'$ are \emph{special} vertices at time $\alpha t$, \eqref{eq:upper-explosive-bound-q-prime} holds independently of $N_{\ref{lemma:weighted_distance_to_high_degree}}$, as the path to inner core, constructed in the proof of Proposition \ref{prop:weighted_distance_in_t},  uses only edges that arrived after $\alpha t$.
To apply Proposition \ref{prop:innercore_bounded}, we let $\delta_{\ref{prop:innercore_bounded}}=\delta/8$ and $\varepsilon_{\ref{prop:innercore_bounded}}=\varepsilon/3$.
  By a union bound over the events in Propositions \ref{prop:weighted_distance_in_t} and \ref{prop:innercore_bounded}
  there is a path from $u'$ to $v'$ of weight at most $\varepsilon/3$, yielding \eqref{eq:theorem_explosive_upperbound_uv_large_degree}.
  Combining \eqref{eq:theorem_explosive_upperbound_distance_to_ppg_1} and \eqref{eq:theorem_explosive_upperbound_uv_large_degree}
  obtains the desired bound \eqref{eq:weighted_distance_explosive_upperbound}.
\end{proof}

\section{Lower bound on the weighted distance}\label{sec:lowerbound}
The next propositions state the lower bounds for Theorem \ref{thm:weighted_distance_nonexplosive} and \ref{thm:weighted_distance_explosive},
which are the counterparts of Propositions \ref{prop:upper_bound} and \ref{prop:upperbound_explosive}.
The lower bound of Theorem \ref{theorem:conservative_lwl} which follows from the same proof techniques, is postponed to Section \ref{sec:cons-lwl}.
\begin{proposition}[Lower bound on the weighted distance, conservative case]\label{prop:lowerbound_unexplosive}
  Consider PA under the same conditions as Theorem \ref{thm:weighted_distance_nonexplosive}.
  Recall $I(L) = \infty$.
  Let $u, v$ be two typical vertices. Then for
  every $\delta, \varepsilon > 0$, if $t$ is sufficiently large,
  \begin{equation}
    \prob\left(d_L^{(t)}(u,v) \geq (1-\varepsilon)2Q_t\right)
    \geq 1-\delta.\label{eq:weighted_distance_nonexplosive}
  \end{equation}
  Moreover, for the models FPA and VPA from Definition \ref{def:pa_m_delta} and \ref{def:pa_gamma},
  there exists a constant $M_{\ref{prop:lowerbound_unexplosive}}=M_{\ref{prop:lowerbound_unexplosive}}(\delta)$ such that for $t$ sufficiently large
  \begin{equation}
  \prob\left(d_L^{(t)}(u,v) \geq 2Q_t - 2M_{\ref{prop:lowerbound_unexplosive}}\right) \geq 1-\delta.
  \label{eq:weighted_distance_nonexplosive_tight}
  \end{equation}
\end{proposition}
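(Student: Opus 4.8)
The plan is to deduce the lower bound from a one-endpoint estimate. Set $\mathcal C_t:=\mathrm{Inner}_{\alpha}$ as in \eqref{eq:innercore} (the high-degree ``core''), and show that for every $\varepsilon,\delta>0$, whp $d_L^{(t)}(u,\mathcal C_t)\ge(1-\varepsilon)Q_t$, and likewise with $v$ in place of $u$. Granting this, let $\pi^\ast$ be a minimising $u$--$v$ path. If $\pi^\ast$ meets $\mathcal C_t$, splitting it at such a vertex $w$ and using $\|\pi^\ast_{u,w}\|_L\ge d_L^{(t)}(u,\mathcal C_t)$, $\|\pi^\ast_{w,v}\|_L\ge d_L^{(t)}(v,\mathcal C_t)$ gives $d_L^{(t)}(u,v)\ge 2(1-\varepsilon)Q_t$. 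If instead $\pi^\ast$ avoids $\mathcal C_t$, then it is a $u$--$v$ path through vertices outside the core; such a path between typical vertices cannot be short in the graph metric (a path realising graph distance $O(\log\log t)$ between typical vertices necessarily uses very high-degree vertices, which is part of the distance analysis of \cite{dommers2010diameters,dereich2012typical,monch2013distances}), so $\pi^\ast$ has a number of edges growing faster than any fixed multiple of $K^\ast_t$, whence $\|\pi^\ast\|_L$ dominates a sum of that many i.i.d.\ copies of $L$ and is $\gg Q_t$ whp. Either way $d_L^{(t)}(u,v)\ge 2(1-\varepsilon)Q_t$, which is \eqref{eq:weighted_distance_nonexplosive}.

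For the one-endpoint estimate I would run the greedy-path machinery of Section \ref{sec:upperbound} in reverse. First localise: by Proposition \ref{prop:local_weak_limit} for the early generations together with direct PA degree estimates as in \cite{dommers2010diameters,monch2013distances} beyond, whp $D_t(u)=O(1)$, the exploration from $u$ restricted to vertices of degree below an intermediate threshold is a bounded-degree tree for its first $\Theta(\log\log t)$ generations, and (as one checks for the out-neighbours of a typical vertex) whp $u$ is not adjacent to any vertex of degree exceeding a large constant. Next introduce thresholds $\delta_0<\delta_1<\dots<\delta_{K^\ast_t}$ with $\delta_0$ a large constant, $\delta_{K^\ast_t}$ the core threshold of \eqref{eq:innercore}, and $\delta_k\approx\delta_{k-1}^{1/\sqrt{\tau-2}}$ up to polylogarithmic corrections analogous to those in $(s_k)_{k\ge0}$ in \eqref{eq:sk}; set $\mathcal L_k:=\{x\in[t]:D_t(x)\ge\delta_k\}$, so $\mathcal L_0\supseteq\mathcal L_1\supseteq\dots\supseteq\mathcal L_{K^\ast_t}=\mathcal C_t$. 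The key technical step — the mirror image of \eqref{eq:small_neighbourhoods_error_outline} and Lemma \ref{lemma:error_prob_tk} — is an \emph{upper} bound, via Lemma \ref{lemma:total_degree_high_degree_vertices} and a Chernoff bound on the upper tail, on the number of vertices of $\mathcal L_k$ within graph distance $2k$ of $u$: whp this number is at most $N_k$ with the same doubly exponential form $\exp\big((1+o(1))(\tau-2)^{-k/2}\big)$ as the connector counts, uniformly in $k\le K^\ast_t$. The mechanism is that in PA a vertex of degree $d$ has only $O\big(d\,D^{2-\tau}\big)$ neighbours of degree at least $D$ in expectation, so the degree exponent along a path can grow by at most the factor $1/(\tau-2)$ every two graph-steps.

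Given the count bound, take any path $\pi$ from $u$ to $\mathcal C_t=\mathcal L_{K^\ast_t}$. For each $k\in[K^\ast_t]$ let $e_k$ be the edge along which $\pi$ first enters $\mathcal L_k$; by the previous paragraph and the tree-likeness, $e_k$ is one of at most $C\cdot N_k$ edges present in the localised neighbourhood, and these edges are essentially distinct across $k$, so $\|e_k\|_L$ stochastically dominates the minimum of $C\cdot N_k$ i.i.d.\ copies of $L$. Part $(\ast)$ of Lemma \ref{lemma:minimum_iid_rvs} and a union bound over $k$ then give, whp, $\|e_k\|_L\ge F_L^{(-1)}\big(N_k^{-1-\xi}\big)$ for all $k$, hence
\[
\|\pi\|_L\ \ge\ \sum_{k=1}^{K^\ast_t}F_L^{(-1)}\big(N_k^{-1-\xi}\big).
\]
Comparing this sum with $Q_t$ in \eqref{eq:k_ast_qt} — by the same passage from sums to integrals, change of variables, and back to sums as in the proof of Proposition \ref{prop:weighted_distance_in_t}, now read backwards — the right-hand side is at least $(1-\varepsilon)Q_t$, absorbing the $(1+\xi)$-shift into $\varepsilon$ and using $Q_t\to\infty$. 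This proves the one-endpoint estimate, hence \eqref{eq:weighted_distance_nonexplosive}.

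For the tight bound \eqref{eq:weighted_distance_nonexplosive_tight} for FPA and VPA one keeps the corrections explicit. Two features mean no analogue of condition \eqref{eq:tightness_cond} is needed: the polylogarithmic corrections in $\delta_k$ and $N_k$ act in the \emph{favourable} direction for a lower bound (fewer candidate edges, so a stronger bound), and the only genuine loss — the $(1+\xi)$ in part $(\ast)$ of Lemma \ref{lemma:minimum_iid_rvs} — produces a fixed index-shift in the $Q_t$-sum and therefore costs only a constant $M_{\ref{prop:lowerbound_unexplosive}}=M_{\ref{prop:lowerbound_unexplosive}}(\delta)$, namely the sum of finitely many of the largest summands of $Q_t$, no matter how flat $F_L$ is near $0$. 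Combining the $u$- and $v$-estimates as in the first paragraph yields \eqref{eq:weighted_distance_nonexplosive_tight}. The main obstacle is the count bound of the second paragraph: it is the exact mirror of Step (II) of Section \ref{sec:upperbound}, is genuinely model-specific (FPA has negatively correlated edges, VPA and GVPA are conditionally independent), and so requires the model-specific estimates of \cite{dommers2010diameters} for FPA and of \cite{monch2013distances,dereich2009random} for VPA, together with a truncation keeping the explored neighbourhood tree-like while degrees climb from $O(1)$ up to the core threshold; reproducing the exponents $(\tau-2)^{-k/2}$ of $Q_t$ with only an $O(1)$ discrepancy is the most delicate accounting.
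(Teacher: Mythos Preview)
Your approach differs substantially from the paper's, and as written has genuine gaps.

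The paper does \emph{not} route through the inner core or use degree layers at all for the lower bound. It works with graph-distance spheres $\partial\mathcal B_G^{(t)}(q,k)$. The three steps are: (I) cite the graph-distance lower bound (Proposition~\ref{prop:upperbound_dg}, from \cite{dereich2012typical}) to conclude $\mathcal B_G^{(t)}(u,k)\cap\mathcal B_G^{(t)}(v,k)=\varnothing$ for $k\le K_t^\ast-M_{\ref{prop:upperbound_dg}}(t)$, so that any $u$--$v$ path contributes an edge between $\partial\mathcal B_G^{(t)}(q,i)$ and $\partial\mathcal B_G^{(t)}(q,i+1)$ for each $q\in\{u,v\}$ and each $i$; (II) bound the sphere sizes $|\partial\mathcal B_G^{(t)}(q,k)|\le\exp\big(2B(\gamma/(1-\gamma))^{k/2}\big)$ via the path-counting machinery of \cite{dereich2012typical} (Lemma~\ref{lemma:small_neighbourhoods}); (III) apply part $(\ast)$ of Lemma~\ref{lemma:minimum_iid_rvs} to the at most $|\partial\mathcal B_G^{(t)}(q,i)|\cdot|\partial\mathcal B_G^{(t)}(q,i+1)|$ candidate edges between consecutive spheres, sum over $i$, and convert to $Q_t$ by the same integral/variable-change device.

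This sidesteps both of your problematic points. First, no case split on whether $\pi^\ast$ meets $\mathcal C_t$ is needed: the graph-distance bound handles all paths at once. Your treatment of the core-avoiding case is a hand-wave (``part of the distance analysis of \cite{dommers2010diameters,dereich2012typical,monch2013distances}''); the claim that such a path must have more than any constant multiple of $K_t^\ast$ edges is not stated in the cited literature, and even granting it, deducing $\|\pi^\ast\|_L\gg Q_t$ requires lower-tail concentration for sums of i.i.d.\ $L$'s, a separate argument when $F_L(0^+)=0$ but $\inf\mathrm{supp}(L)=0$.

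Second, and more seriously, your claim that the first-entry edges $e_k$ are ``essentially distinct across $k$'' is not justified by the count bound you propose. Nothing in that bound prevents a single edge of $\pi$ from jumping from a vertex of degree $\delta_0$ directly to one of degree $\delta_{K_t^\ast}$, in which case $e_1=e_2=\dots=e_{K_t^\ast}$ and your sum $\sum_k\|e_k\|_L$ counts one weight $K_t^\ast$ times. Ruling this out \emph{for every path simultaneously} is precisely the content of the good/bad path dichotomy in \cite{dereich2012typical}, but that analysis is phrased in terms of vertex \emph{labels} along paths, not degrees, and re-deriving it in your degree-layer language would amount to redoing Lemma~\ref{lemma:small_neighbourhoods}. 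The paper's use of graph-distance spheres avoids the issue entirely: the edge between sphere $i$ and sphere $i+1$ is automatically distinct from the one between spheres $j$ and $j+1$ for $j\ne i$.

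Finally, what you call ``the mirror image of Lemma~\ref{lemma:error_prob_tk}'' is not that lemma's mirror at all: Lemma~\ref{lemma:error_prob_tk} lower-bounds $\alpha$-connector counts via Chernoff, whereas the upper bound you need on the number of $\mathcal L_k$-vertices within distance $2k$ of $u$ is a first-moment path-counting estimate, exactly the content of the paper's Lemma~\ref{lemma:small_neighbourhoods}. Identifying the right analogue here would already push you toward the paper's sphere-based decomposition.
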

\noindent Observe that Proposition \ref{prop:lowerbound_unexplosive} gives a tight lower bound on the weighted distance for any distribution, i.e., it does not depend on the tightness condition \eqref{eq:tightness_cond}. Thus, only the upper bound in \eqref{eq:weighted_distance_nonexplosive_upperbound} needs be improved to prove \eqref{eq:weighted-distance-tight} regardless of \eqref{eq:tightness_cond}.

We proceed to the main proposition for the explosive edge-weight distributions.
\begin{proposition}[Lower bound on the weighted distance, explosive case]\label{prop:lowerbound_explosive}
  Consider  PA under the same conditions as Theorem \ref{thm:weighted_distance_explosive}.
    Recall $I(L) < \infty$.
  Let $u, v$ be two typical vertices. Then, there is a coupled probability space,  such that for any $\delta>0$, there exists
  some function $a(t)$ that tends to infinity with $t$, such that
  \begin{equation}
    \prob\bigg(d_L^{(t)}(u,v)\geq \beta^{\mathrm{LWL}_{a(t)}^{(u)}}_{a(t)} + \beta^{\mathrm{LWL}_{a(t)}^{(v)}}_{a(t)}\bigg)
    \geq 1-\delta,\nonumber
  \end{equation}
  where $\beta^{\mathrm{LWL}_{a(t)}^{(u)}}_{a(t)}$ and $\beta^{\mathrm{LWL}_{a(t)}^{(u)}}_{a(t)}$
  are the times to reach graph distance $a(t)$ in the LWLs coupled to $u$, $v$, respectively.
\end{proposition}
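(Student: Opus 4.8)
\textbf{Proof proposal for Proposition \ref{prop:lowerbound_explosive}.} The plan is to exploit that any $u$--$v$ path must \emph{separately} escape the graph-ball of radius $a(t)$ around $u$ and the one around $v$, provided these two balls are disjoint, and then to transport the resulting bound to the LWL via local weak convergence. Throughout, fix $\delta>0$. Let $\kappa_{\delta/5}(t)$ be the coupling radius from Proposition \ref{prop:local_weak_limit} for error $\delta/5$, and set $a(t):=\min\{\kappa_{\delta/5}(t),\,g(t)\}$ for some $g(t)\to\infty$ with $g(t)=o(\log\log t)$ (e.g.\ $g(t)=\lfloor\sqrt{\log\log(t\vee 16)}\,\rfloor$), so that $a(t)\to\infty$ and $a(t)\le\kappa_{\delta/5}(t)$.

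\emph{Disjointness of the balls.} Since for PA with $\tau\in(2,3)$ the typical graph distance satisfies $d_G^{(t)}(u,v)/\big(4\log\log(t)/|\log(\tau-2)|\big)\to 1$ in probability (see \cite{dereich2012typical, caravenna2016diameter}, or Proposition \ref{prop:lowerbound_unexplosive} applied with $L\equiv 1$), and $2a(t)=o(\log\log t)$, for $t$ large we have $\prob\big(d_G^{(t)}(u,v)>2a(t)\big)\ge 1-\delta/5$; on this event $\mathcal{B}_G^{(t)}(u,a(t))\cap\mathcal{B}_G^{(t)}(v,a(t))=\varnothing$. Call this event $\mathcal{F}_1$.

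\emph{The coupling.} By Proposition \ref{prop:local_weak_limit} applied to a single typical vertex, together with its two-root extension --- valid because two typical vertices are at graph distance tending to infinity, so the neighbourhood of the first revealed root has size $o_{\prob}(t)$ and the neighbourhood of the second root is asymptotically unaffected by conditioning on it --- one builds a probability space carrying $\mathrm{PA}_t$ with i.i.d.\ $F_L$ edge-weights and two independent copies $\mathrm{LWL}^{(u)},\mathrm{LWL}^{(v)}$, each with i.i.d.\ $F_L$ edge-weights, such that with probability at least $1-\delta/5$ there are rooted graph-isomorphisms $\widetilde{\mathcal{B}}_G^{(t)}(u,a(t))\simeq\mathrm{LWL}^{(u)}_{a(t)}(\circledcirc)$ and $\widetilde{\mathcal{B}}_G^{(t)}(v,a(t))\simeq\mathrm{LWL}^{(v)}_{a(t)}(\circledcirc)$; and, since i.i.d.\ weight families are exchangeable under any measurable relabelling, one may further couple the weights through these isomorphisms so corresponding edges carry equal weights. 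Call this event $\mathcal{F}_2$. On $\mathcal{F}_2$, graph distances from $u$ inside $\mathcal{B}_G^{(t)}(u,a(t))$ coincide with those in $\mathrm{PA}_t$, and any weighted-shortest path from $u$ to $\partial\mathcal{B}_G^{(t)}(u,a(t))$ may be truncated at its first hitting of graph-distance $a(t)$ to stay inside that ball, so $\beta^{(t)}_{a(t)}(u)$ is a function of the weighted induced subgraph $\widetilde{\mathcal{B}}_G^{(t)}(u,a(t))$; since the rooted isomorphism preserves distances from the root, $\beta^{(t)}_{a(t)}(u)=\beta^{\mathrm{LWL}^{(u)}}_{a(t)}$ (and likewise for $v$).

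\emph{Assembling the bound.} On $\mathcal{F}_1\cap\mathcal{F}_2$, any path $\pi$ from $u$ to $v$ in $\mathrm{PA}_t$ contains a prefix from $u$ to $\partial\mathcal{B}_G^{(t)}(u,a(t))$ and, edge-disjointly (because the two balls do not meet), a suffix from $v$ to $\partial\mathcal{B}_G^{(t)}(v,a(t))$, hence $\|\pi\|_L\ge\beta^{(t)}_{a(t)}(u)+\beta^{(t)}_{a(t)}(v)$; minimising over $\pi$ and using the previous step yields $d_L^{(t)}(u,v)\ge\beta^{\mathrm{LWL}^{(u)}}_{a(t)}+\beta^{\mathrm{LWL}^{(v)}}_{a(t)}$. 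As $\prob(\mathcal{F}_1\cap\mathcal{F}_2)\ge 1-2\delta/5\ge 1-\delta$, the proposition follows. The main obstacle is the second step: upgrading the single-vertex local weak convergence of Proposition \ref{prop:local_weak_limit} to a \emph{joint} coupling of the two disjoint neighbourhoods with two \emph{independent} LWL copies, while carrying the edge-weights along; this requires controlling how conditioning on $u$'s neighbourhood perturbs $v$'s, and is where a dedicated two-root coupling lemma, in the spirit of the couplings already used in Section \ref{sec:upperbound}, is needed.
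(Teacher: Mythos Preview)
Your proposal is correct and follows essentially the same route as the paper: use the graph-distance lower bound to make the two balls disjoint, then invoke the local-weak-limit coupling so that $d_L^{(t)}(q,\partial\mathcal{B}_G^{(t)}(q,a(t)))=\beta_{a(t)}^{\mathrm{LWL}^{(q)}}$, and add the two contributions. The paper simply takes $a(t)=\min\{\kappa_{\delta/4}(t),\,K^\ast_t-M_{\ref{prop:upperbound_dg}}(t)\}$ using Proposition~\ref{prop:upperbound_dg} directly, whereas you introduce an auxiliary $g(t)=o(\log\log t)$; both choices work, though the paper's is slightly cleaner since Proposition~\ref{prop:upperbound_dg} already supplies the exact disjointness threshold. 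You are also more explicit than the paper about the two-root joint coupling; the paper applies the one-vertex coupling of Proposition~\ref{prop:local_weak_limit} to each of $u,v$ and implicitly union-bounds the failure events, without spelling out the independence of the two LWL copies---so your caution there is well placed, but it is not a divergence in strategy.
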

\noindent As the proofs of the propositions are partly based on the same principles, we outline the proofs simultaneously,
after which we prove the required lemmas, and then give a separate proof for the above propositions. For notational convenience, we only outline the models FPA and VPA.
\subsection*{Outline of the proofs}
Let $u, v$ be two typical vertices.
Recall $K^\ast_t$ from \eqref{eq:k_ast_qt}.
From \cite[Theorem 2]{dereich2012typical} it follows that
any path connecting $u$ and $v$ has whp at least $2K^\ast_t-M'_t$ edges w/p close to 1, for some function $M'_t=o(K^\ast_t)$.
The idea of the proof of Proposition \ref{prop:lowerbound_unexplosive} is to show that any path
starting from $u$ or $v$ that has $K^\ast_t-M'_t$ edges, has total weight at least $Q_t-M_{\ref{prop:lowerbound_unexplosive}}$, or total weight at least $\beta^{\mathrm{LWL}}_{a(t)}$, for some function $a(t)$ that tends to infinity, for the conservative and explosive case respectively.
The main steps that make this succeed,
which we formalize further below, are as follows, where (I-III) apply for the conservative case, while the explosive case follows from (I).
\begin{enumerate}[label=(\Roman*),leftmargin=2\parindent]
  \item We use results from \cite{dereich2012typical, dommers2010diameters} to conclude  that
  the neighbourhoods $\mathcal{B}_G^{(t)}(u,k)$, $\mathcal{B}^{(t)}_G(v,k)$ are disjoint whp for $k<K^\ast_t-M'_t$.
  These neighbourhoods up to graph distance $a(t)$ are coupled to two independent LWLs, from which the lower bound for the explosive case will follow.
  \item We show that $\partial\mathcal{B}_G^{(t)}(u,k)$ has whp at most $\exp\left(B(\tau-2)^{-k/2}\right)$ vertices,
  for $B$ sufficiently large and $k<K^\ast_t - M'_t$, by counting the number of paths of length $k$ starting from $q\in\{u,v\}$. The number of such paths is an upper bound for the number of vertices in $\partial\mathcal{B}_G^{(t)}(q,k)$.
\item From (I) it follows that $d_L^{(t)}(u, v)$ is at least the weighted distance from $u$
to $\partial \mathcal{B}_{G}^{(t)}(u,K^\ast_t-M'_t)$ plus the weighted distance from $v$ to $\partial \mathcal{B}_{G}^{(t)}(v, K^\ast_t-M'_t)$.
Along the same reasoning as shown by Adriaans and Komj\'athy \cite[Lower bound (2.11)]{adriaans2017weighted},
we obtain the lower bound
\begin{align}
  d_L^{(t)}(u,v)
  &\geq d_L^{(t)}\left(u, \partial \mathcal{B}_{G}^{(t)}(u, K^\ast_t-M'_t)\right)
   + d_L^{(t)}\left(v, \partial \mathcal{B}_{G}^{(t)}(v, K^\ast_t-M'_t)\right) \nonumber\\
  &\geq \sum_{q\in \{u,v\}}\sum_{i=0}^{\lfloor K^\ast_t-M'_t\rfloor-1} \min_{x\in\partial \mathcal{B}_G^{(t)}(q, i), y\in\partial \mathcal{B}_{G}^{(t)}(q, i+1)} L_{(x,y)}. \label{eq:weighted_distance_lb_sum_min}
\end{align}
The weight $L_{(x,y)}$ is the weight attached to the edge $(x,y)$ in the graph
if it is present. Otherwise, it is a new i.i.d.\ copy of $L$. Note that this is
a valid lower bound, as the minimum is non-increasing when adding more edges.
Using the bounds established in (II),
we show that this sum of minima is at least $2Q_t-2M_{\ref{prop:lowerbound_unexplosive}}$.
\end{enumerate}
Before proving Proposition \ref{prop:lowerbound_unexplosive} and \ref{prop:lowerbound_explosive},
we formally introduce the lemmas and proposition that correspond to Step (I-III).
We start with a proposition from \cite{dereich2012typical} that implies the first part of (I).
\begin{proposition}[{Typical graph distance \cite[Theorem 2]{dereich2012typical}}]\label{prop:upperbound_dg}
  Consider PA with power-law exponent $\tau\in(2, 3)$.
  Let $u, v$ be two typical vertices.
  For any $\delta_{\ref{prop:upperbound_dg}}>0$, there exists a function $M_{\ref{prop:upperbound_dg}}(t)$ such that if $t$ is sufficiently large, then
  \begin{equation}
    \prob\left(\left\{d_G^{(t)}(u,v)>2K^\ast_t - 2M_{\ref{prop:upperbound_dg}}(t)\right\}\right) \geq 1-\delta_{\ref{prop:upperbound_dg}}\label{eq:upperbound_dg},
  \end{equation}
  where the function $M_{\ref{prop:upperbound_dg}}(t)$ is of order
  $O(1)$ for the models FPA and VPA, and
    $o(K^\ast_t)$ for the model GVPA.
  We denote the above event between brackets by $\mathcal{E}_{\ref{prop:upperbound_dg}}^{(t)}$.
\end{proposition}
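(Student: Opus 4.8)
This statement is essentially \cite[Theorem~2]{dereich2012typical}, and for the model FPA it can alternatively be obtained by adapting \cite{dommers2010diameters}; we only indicate the shape of the argument and, in particular, why the error term $M_{\ref{prop:upperbound_dg}}(t)$ is of order $O(1)$ for FPA and VPA but only $o(K^\ast_t)$ for GVPA. The inequality $d_G^{(t)}(u,v)>2K^\ast_t-2M_{\ref{prop:upperbound_dg}}(t)$ is equivalent, up to a parity correction of one of the radii by $1$, to the assertion that the balls $\mathcal{B}_G^{(t)}(u,K^\ast_t-M_{\ref{prop:upperbound_dg}}(t))$ and $\mathcal{B}_G^{(t)}(v,K^\ast_t-M_{\ref{prop:upperbound_dg}}(t))$ are vertex-disjoint. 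So the plan is to prove that this last event holds whp, by a first-moment/path-counting argument.

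First, since $u$ and $v$ are uniform in $[t]$, they are born after $\varepsilon t$ whp, so I would condition on $u,v\in[t]\setminus[\varepsilon t]$; this conditioning is what produces the \emph{sharp} leading constant $2$ in $2K^\ast_t$, since for such vertices $\log\log(t/u)=\log\log t+O(1)$ and the degree of a uniform vertex concentrates on a constant. Second, I would explore $\mathcal{B}_G^{(t)}(q,k)$ for $q\in\{u,v\}$ generation by generation and control its \emph{profile}: the number of boundary vertices and the largest degree $Z_k(q):=\max\{D_t(w):w\in\mathcal{B}_G^{(t)}(q,k)\}$ occurring in it. Using the standard PA connection bound, which in label terms reads $\prob(\{i\leftrightarrow j\})\lesssim (i\wedge j)^{-1/(\tau-1)}(i\vee j)^{-(\tau-2)/(\tau-1)}$, together with the whp degree concentration $D_t(i)\lesssim (t/i)^{1/(\tau-1)}\log t$ uniform in $i$ and the total-degree estimate of Lemma~\ref{lemma:total_degree_high_degree_vertices}, one proves by induction a doubly-exponential profile bound of the form
\begin{equation}
|\partial\mathcal{B}_G^{(t)}(q,k)|\,\le\,\exp\!\big(B(\tau-2)^{-k/2}\big),\qquad Z_{2k}(q)\,\le\, B^{(\tau-2)^{-k}},\nonumber
\end{equation}
valid whp for all $k$ up to the threshold at which the right-hand sides reach $\sqrt{t}$. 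The key structural point, specific to PA, is that the degree exponent grows by the factor $1/(\tau-2)$ only every \emph{two} generations: a vertex of degree $d$ has only few \emph{older} neighbours (at most $m$ for FPA, polylogarithmically many whp for VPA and GVPA), and these have degree only $O(d)$, while all its remaining, \emph{younger} neighbours have controlled degree; hence reaching a vertex of degree $\gg d$ forces a step to a younger low-degree vertex and only then to a fresh high-degree one. This is exactly the mechanism producing the $2K^\ast_t$ (versus $K^\ast_t$ in the configuration model), dual to the cherry construction of Section~\ref{sec:upperbound}.

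Third, with these profile bounds in hand, for a suitably large $M_{\ref{prop:upperbound_dg}}(t)$ and all $k<K^\ast_t-M_{\ref{prop:upperbound_dg}}(t)$, both balls stay strictly below the inner-core degree threshold of \eqref{eq:innercore}; equivalently, they contain no vertex of label below $t^{c}$ for an explicit $c=c(\tau)>0$, and they have at most $\exp\!\big(B(\tau-2)^{-(K^\ast_t-M_{\ref{prop:upperbound_dg}}(t))/2}\big)$ vertices. A union bound over all pairs $(x,y)$ with $x\in\partial\mathcal{B}_G^{(t)}(u,i)$, $y\in\partial\mathcal{B}_G^{(t)}(v,j)$, $i+j\le 2K^\ast_t-2M_{\ref{prop:upperbound_dg}}(t)$, using the connection bound above, then shows that the two balls meet with probability $o(1)$: each edge probability is at most $t^{-c_\tau}$ for a constant $c_\tau>0$ (because the labels involved are at least $t^{c}$), while the number of such pairs is at most $t^{O((\tau-2)^{M_{\ref{prop:upperbound_dg}}(t)/2})}$, and these two contributions balance precisely at $k\approx K^\ast_t$, so their product tends to $0$ once $(\tau-2)^{M_{\ref{prop:upperbound_dg}}(t)/2}$ is small enough. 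For FPA and VPA the degree of a uniform vertex and the recursion constants are tight, so a fixed constant $M_{\ref{prop:upperbound_dg}}(t)=O(1)$ suffices; for GVPA one must first reduce a non-affine attachment rule to an affine one by the stochastic-domination step used in Lemma~\ref{lemma:error_prob_tk}, which costs an $\varepsilon_G$-slack in the exponents, and the profile recursion then closes only with $o(K^\ast_t)$ slack, whence $M_{\ref{prop:upperbound_dg}}(t)=o(K^\ast_t)$.

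The main obstacle is the two-generations-per-exponentiation profile bound on $Z_{2k}$: one must split each step of the exploration into moves to \emph{older} versus \emph{younger} neighbours, control the number and the degrees of each type, and keep the doubly-exponential recursion from degrading, i.e.\ ensure the constant $B$ does not blow up over the $\Theta(K^\ast_t)$ generations. This is where the bulk of the work in \cite{dereich2012typical, dommers2010diameters} lies; the remainder is bookkeeping, assembling the $O(1)$ corrections coming from conditioning on $u,v\in[t]\setminus[\varepsilon t]$, from the rounding in $K^\ast_t$ in \eqref{eq:k_ast_qt}, and from the logarithmic factors in the degree concentration, into the claimed error orders.
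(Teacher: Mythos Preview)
The paper does not prove this proposition at all: it is quoted verbatim as a result from \cite[Theorem~2]{dereich2012typical} and used as a black box in Section~\ref{sec:lowerbound}. Your sketch of the path-counting argument behind that cited theorem is broadly accurate and in line with what \cite{dereich2012typical} actually does (and with how the paper reuses that machinery in Lemma~\ref{lemma:small_neighbourhoods}).

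One point is misattributed, however. You locate the $o(K^\ast_t)$ slack for GVPA in the stochastic domination of $\mathrm{GVPA}(f)$ by an affine $\mathrm{GVPA}(f_1)$ as in Lemma~\ref{lemma:error_prob_tk}. That domination gives a \emph{lower} bound on degrees and connection probabilities, which is what the \emph{upper} bound on distances needs. For the \emph{lower} bound on $d_G^{(t)}(u,v)$ in Proposition~\ref{prop:upperbound_dg}, one needs an \emph{upper} bound on connection probabilities, and this is supplied by the $\mathrm{PA}(\gamma)$ condition of Proposition~\ref{prop:pa_g}. For FPA and VPA that condition holds at the exact value $\gamma=1/(\tau-1)$, whence $A_\gamma=\gamma/(1-\gamma)=1/(\tau-2)$ and the recursion closes with $O(1)$ error. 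For general concave $f$, Proposition~\ref{prop:pa_g} only guarantees $\mathrm{PA}(\gamma)$ for $\gamma>\gamma_f$ strictly; the resulting mismatch $A_\gamma>1/(\tau-2)$ is what forces the $(1-\varepsilon)$ factor (cf.\ \eqref{eq:prop_proof_lower_gamma}) and hence $M_{\ref{prop:upperbound_dg}}(t)=o(K^\ast_t)$. So the mechanism you describe is right, but the direction of the domination and the lemma it invokes are reversed.
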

\noindent Indeed, as a result of Proposition \ref{prop:upperbound_dg}, on the event $\mathcal{E}_{\ref{prop:upperbound_dg}}^{(t)}$
\begin{equation*}
  \mathcal{B}^{(t)}_G(u,k) \cap \mathcal{B}^{(t)}_G(v,k) = \varnothing\qquad\text{for $k\leq K^\ast_t - M_{\ref{prop:upperbound_dg}}(t)$},
\end{equation*}
establishing (I).
We continue with the lemmas implying (II) and (III) from the outline and prove the conservative case first.
We make use of some results from Dereich, M\"{o}nch, and M\"{o}rters
\cite[Theorem 2]{dereich2012typical} where they prove lower bounds on graph
distances for random graphs. They work under the following general condition and prove that it holds for FPA and GVPA.
\begin{proposition}[PA($\gamma$) {\cite[Proposition 3.1, 3.2]{dereich2012typical}}]\label{prop:pa_g}
  We say that PA satisfies the condition PA($\gamma_{\ref{prop:pa_g}}$)
  for some $\gamma_{\ref{prop:pa_g}}\in(0, 1)$, if there exists a constant $\nu_{\ref{prop:pa_g}} > 0$, such that
  for all $t$ and pairwise distinct vertices $v_0, ..., v_l\in [t]$
  \begin{equation}\label{eq:definition_p_v0_v1}
    \prob(v_0\leftrightarrow v_1 \leftrightarrow v_2 \leftrightarrow \dots
    \leftrightarrow v_\ell) \leq \prod_{k=1}^\ell p(v_{k-1}, v_k)
    =: p(v_0, ..., v_\ell)
  \end{equation}
  where $p(m, n):=\nu_{\ref{prop:pa_g}} (m \wedge n)^{-\gamma_{\ref{prop:pa_g}}}(m\vee n)^{\gamma_{\ref{prop:pa_g}-1}}$.
  The above condition is satisfied for FPA or VPA, with $\gamma_{\ref{prop:pa_g}}=1/(\tau-1)$; and for GVPA for any $\gamma_{\ref{prop:pa_g}}>\gamma_f$.
\end{proposition}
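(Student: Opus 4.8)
The plan is to verify the path‑probability bound \eqref{eq:definition_p_v0_v1} by revealing the edges of the candidate path in a well‑chosen order and then estimating the resulting product of conditional connection probabilities via moment bounds on indegrees; this is in essence the argument of \cite[Propositions 3.1--3.2]{dereich2012typical}, which I outline. Identify each vertex with its (deterministic) arrival time, so that in every path‑edge $\{v_{k-1},v_k\}$ one endpoint, say $b_k$, is younger than the other, $a_k$; then $a_k=a_k\wedge b_k$ and $b_k=a_k\vee b_k$. Order the $\ell$ edges so that the sequence $(b_k)$ is non‑decreasing and reveal them one at a time in this order, applying the tower rule and conditioning at the $k$‑th step on the graph just before $b_k$ arrives (which already determines all path‑edges with a strictly younger younger‑endpoint). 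For VPA the edges emitted by a fixed vertex are conditionally independent given the past, and the probability that a fixed older vertex $a$ is hit by $b$ equals $f(D^{\leftarrow}_b(a))/b\le c\,(D^{\leftarrow}_b(a)+C)/b$ by concavity of $f$ and $f(1)-f(0)<1$; for FPA the $m$ edges emitted by $b$ are negatively associated given the past, so the probability of forming any prescribed subset of them is at most the corresponding product of single‑edge bounds $c\,(D^{\leftarrow}_b(a)+C)/b$ (and the multigraph nature only helps, as it can only increase the chance of a prescribed edge while being counted once). Multiplying the conditional estimates yields
\[
  \prob\big(v_0\leftrightarrow v_1\leftrightarrow\cdots\leftrightarrow v_\ell\big)
  \;\le\; \E\!\left[\,\prod_{k=1}^{\ell} c\,\frac{D^{\leftarrow}_{b_k}(a_k)+C}{b_k}\,\right].
\]

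Next I would bound this expectation. Since the $v_i$ form a path, every vertex is incident to at most two path‑edges, hence occurs as the \emph{older} endpoint $a_k$ in at most two of the factors. Grouping the factors by their older endpoint and using Cauchy--Schwarz to decouple the at most two factors that share one, together with the standard preferential‑attachment indegree moment estimate — for $w\le n$ and fixed $q\ge 1$ there is $C_q$ with $\E\big[(D^{\leftarrow}_n(w)+C)^{q}\big]\le C_q\,(n/w)^{q\gamma}$, where $\gamma=1/(\tau-1)$ — each factor $c\,(D^{\leftarrow}_{b_k}(a_k)+C)/b_k$ contributes at most a constant times $a_k^{-\gamma}b_k^{\gamma-1}=(a_k\wedge b_k)^{-\gamma}(a_k\vee b_k)^{\gamma-1}$. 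Absorbing all constants into $\nu_{\ref{prop:pa_g}}$ produces exactly $\prod_{k=1}^{\ell}p(v_{k-1},v_k)$. The indegree moment bound itself rests on the P\'olya‑urn / martingale representation of the indegree process: $(D^{\leftarrow}_n(w)+C)\big/\prod_{s=w+1}^{n}(1+\gamma/s)$ is a nonnegative martingale and $\prod_{s=w+1}^{n}(1+\gamma/s)\asymp(n/w)^{\gamma}$, so an $L^q$ maximal inequality (or a direct recursion on $\E[(D^{\leftarrow}_n(w)+C)^q]$) controls its moments; in VPA one additionally has that, given the past up to time $n-1$, the indegrees of distinct vertices increase conditionally independently, which turns the relevant products of these martingales into genuine (super)martingales and streamlines the decoupling.

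For GVPA the rule $f$ is only asymptotically affine, so the indegree moment bound does not hold verbatim; one fixes $\gamma_{f_1}\in(\gamma_f,\gamma_{\ref{prop:pa_g}})$, dominates $\mathrm{GVPA}(f)$ by $\mathrm{GVPA}(f_1)$ with $f_1$ affine of slope $\gamma_{f_1}$ (exactly as in the proof of Lemma~\ref{lemma:error_prob_tk}), applies the above to $\mathrm{GVPA}(f_1)$, and lets $\gamma_{f_1}\downarrow\gamma_f$ to obtain the claim for every $\gamma_{\ref{prop:pa_g}}>\gamma_f$. The main obstacle is the second step: the indegree factors are \emph{positively} correlated across edges, so one cannot take expectations factor by factor; it is the path structure — each vertex is visited at most twice — that keeps the number of coupled factors bounded, and one still needs the polynomial indegree‑moment bound uniformly over $w\le n$, together with (for FPA) the negative‑association property of the $m$ simultaneously emitted edges and the verification that multi‑edges do not spoil the single‑edge estimate. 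These points are precisely the technical core of \cite[Section 3]{dereich2012typical}.
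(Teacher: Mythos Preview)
The paper does not give its own proof of this proposition; it is quoted verbatim from \cite[Propositions~3.1, 3.2]{dereich2012typical}, and your outline is essentially a faithful reconstruction of the argument there (sequential revelation of edges ordered by their younger endpoint, single-edge bound in terms of the indegree of the older endpoint, then the polynomial indegree-moment estimate $\E[(D^{\leftarrow}_n(w)+C)^q]\le C_q(n/w)^{q\gamma}$ combined with the fact that each vertex of a path is the older endpoint of at most two path-edges).

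One small slip: your appeal to Lemma~\ref{lemma:error_prob_tk} for the GVPA reduction points in the wrong direction. In that lemma one chooses $\gamma_{f_1}<\gamma_f$ and an affine $f_1\le f$, so that $\mathrm{GVPA}(f)$ \emph{dominates} $\mathrm{GVPA}(f_1)$; this is what one needs for \emph{lower} bounds on connectivity. Here you need an \emph{upper} bound on path probabilities, hence an affine $f_1\ge f$ with slope $\gamma_{f_1}\in(\gamma_f,\gamma_{\ref{prop:pa_g}})$, so that $\mathrm{GVPA}(f)$ is \emph{dominated by} $\mathrm{GVPA}(f_1)$. Concavity of $f$ together with $f(x)/x\to\gamma_f$ does guarantee such an $f_1$ exists (for any $\gamma_{f_1}>\gamma_f$ one has $f(x)\le\gamma_{f_1}x$ for $x$ large, and $f$ is bounded on any initial segment), so your conclusion is correct --- only the cross-reference is misleading.
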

\noindent Step (II) in the outline follows from the next lemma.
\begin{lemma}[Small probability of too large neighbourhoods]\label{lemma:small_neighbourhoods}
Consider PA under the same conditions as Proposition \ref{prop:lowerbound_unexplosive}, that satisfies Proposition \ref{prop:pa_g} for some $\gamma_{\ref{prop:pa_g}}\in(1/2, 1)$.
Let $q$ be a typical vertex.
There exist a constant $B_{\ref{lemma:small_neighbourhoods}}=B_{\ref{lemma:small_neighbourhoods}}(\gamma_{\ref{prop:pa_g}},\nu_{\ref{prop:pa_g}})>0$,
such that for any $\delta_{\ref{lemma:small_neighbourhoods}}>0$, there exists
a sequence of events $\mathcal{E}_{\ref{lemma:small_neighbourhoods}}^{(t)}(q)$,
such that for all $t$ sufficiently large
\begin{equation}
  \prob\left(\mathcal{E}_{\ref{lemma:small_neighbourhoods}}^{(t)}(q)\right)\geq1-\delta_{\ref{lemma:small_neighbourhoods}}, \label{eq:small_neighbourhoods_condition}
\end{equation}
and for any $B>B_{\ref{lemma:small_neighbourhoods}}$, $\gamma\in[\gamma_{\ref{prop:pa_g}},1)$ and $M_{\ref{prop:upperbound_dg}}(t)$ from Lemma \ref{prop:upperbound_dg} with parameter $\delta_{\ref{prop:upperbound_dg}}=\delta_{\ref{lemma:small_neighbourhoods}}$,
\begin{equation}
\prob\Bigg(
\bigcap_{k\in[K^\ast_t -M_{\ref{prop:upperbound_dg}}(t)]}
\left\{|\partial \mathcal{B}^{(t)}_G(q,k)|
\leq
\exp\big(2B\left(\gamma/(1-\gamma)\right)^{k/2}\big)\right\}
\,\bigg| \,
\mathcal{E}_{\ref{lemma:small_neighbourhoods}}^{(t)}(q)\Bigg) \geq 1-2\exp(-B).
\label{eq:small_neighbourhoods}
\end{equation}
\begin{proof}
For notational convenience we abbreviate $\gamma=\gamma_{\ref{prop:pa_g}}$ and $A_\gamma=\gamma/(1-\gamma)$.
  We use a path counting technique similar to \cite[Theorem 2]{dereich2012typical},
  see also \cite[Proposition 4.9]{caravenna2016diameter}.
  Consider paths $\pi=(\pi_0, \pi_1, ..., \pi_{k-1}, \pi_K)$
  of length $K<K^\ast_t - M_{\ref{prop:upperbound_dg}}(t)$,
  such that $\pi_0=q, \pi_K=w$, and where $K\leq K^\ast_t - M_{\ref{prop:upperbound_dg}}(t)$.
  For a non-increasing sequence $(\ell_k)_{\geq0}$, we call a path $\pi$ \emph{good} if
  $\pi_k\geq\ell_k$ for all $k\leq K$, and \emph{bad} otherwise.
  Let $\delta':=\delta_{\ref{lemma:small_neighbourhoods}}/3$,
  $\ell_0:=\lceil\delta' t\rceil$.
  We define for a vertex $q\in[t]$ the event
  \begin{equation}\label{eq:event_bad_path}
    \mathcal{E}_{\mathrm{bad}}(q):=\{q<\ell_0\}
    \cup
    \{\exists\text{ bad path of length $K$, for some $K\leq K^\ast_t - M_{\ref{prop:upperbound_dg}}(t)$ from $q$}\}.
  \end{equation}
  The event $\{q< \ell_0\}$ occurs w/p at most $\delta'+1/t$.
  The second event in the union of \eqref{eq:event_bad_path} occurs w/p
  at most $\delta'$, which follows from \cite[first inequality after (18)]{dereich2012typical} for a given choice of $\ell_k$ that we also use here, see \eqref{eq:def_alpha_l} below. Hence,
  for a vertex $q$ chosen uniformly at random from $[t]$
  \[
    \prob(\mathcal{E}_{\mathrm{bad}}(q))\leq2\delta' + 1/t\leq\delta_{\ref{lemma:small_neighbourhoods}},
  \]
  when $t\geq3/\delta_{\ref{lemma:small_neighbourhoods}}$. Let
  $
  \mathcal{E}_{\ref{lemma:small_neighbourhoods}}^{(t)}(q):=\left(\mathcal{E}_{\mathrm{bad}}(q)\right)^c.
  $
  The key element is to prove that for $B$ large enough
  \begin{equation}
    \E[|\partial\mathcal{B}_G(q,k)| \mid \mathcal{E}_{\ref{lemma:small_neighbourhoods}}^{(t)}(q)] \leq \exp\big(BA_\gamma^{k/2}\big).\label{eq:upperbound_ebkv}
  \end{equation}
  Indeed, once we have shown that \eqref{eq:upperbound_ebkv} holds, applying Markov's inequality yields
  \begin{align}
    \prob\Big(
      |\partial\mathcal{B}^{(t)}_G(q,k)| \geq \exp\big(2BA_\gamma^{k/2}\big)
    \mid \mathcal{E}_{\ref{lemma:small_neighbourhoods}}^{(t)}(q)\Big)
    &\,\leq\, \E[|\partial\mathcal{B}_G^{(t)}(q,k)|\mid \mathcal{E}_{\ref{lemma:small_neighbourhoods}}^{(t)}(q)]\,\exp\big(-2BA_\gamma^{k/2}\big) \nonumber\\
    &\,\leq\, \exp\big(-BA_\gamma^{k/2}\big). \label{eq:small_neighbourhoods_applied_markov}
  \end{align}
  Given $\eqref{eq:upperbound_ebkv}$, applying a union bound on
  \eqref{eq:small_neighbourhoods_applied_markov} leads to the result in \eqref{eq:small_neighbourhoods}:
  \begin{align*}
  \prob\bigg(\bigcup_{k=1}^K\big\{|\partial\mathcal{B}^{(t)}_G(q,k)| \geq \exp\big(2BA_\gamma^{k/2}\big)\mid \mathcal{E}_{\ref{lemma:small_neighbourhoods}}^{(t)}(q)\big\}\bigg)
  \leq \sum_{k\in[K]}\exp\big(-BA_\gamma^{k/2}\big)
  \leq 2\exp(-B).
  \end{align*}
  We are left to prove \eqref{eq:upperbound_ebkv}.
  The idea is to  use the number of paths to $\partial \mathcal{B}_G^{(t)}(q,k)$ as an upper bound for the number of
  vertices in $\partial \mathcal{B}_G^{(t)}(q,k)$.
  For $k\leq K^\ast_t-M_{\ref{prop:upperbound_dg}}(t)$, conditionally on $\mathcal{E}_{\ref{lemma:small_neighbourhoods}}^{(t)}$,
  there are only good paths of length $k$ emanating from $q$.
  Let $\Pi_{k, t}^{(\mathrm{g})}(q)$ be the set of such paths. Recall Proposition \ref{prop:pa_g}, and
  observe that we can bound
  \begin{align}
    \E[|\partial \mathcal{B}_G^{(t)}(q,k)| \mid \mathcal{E}_{\ref{lemma:small_neighbourhoods}}^{(t)}(q)]
     \,\leq\, \E\left[\left|\Pi_{k, t}^{(\mathrm{g})}(q)\right| \mid \mathcal{E}_{\ref{lemma:small_neighbourhoods}}^{(t)}(q)\right]
     \,\leq\, \frac{1}{1-\delta_{\ref{lemma:small_neighbourhoods}}}\E\left[\left|\Pi_{k, t}^{(\mathrm{g})(q)}\right|\right].\nonumber   \end{align}
  We can bound $\E[|\Pi_{k, t}^{(\mathrm{g})(q)}|]$ using \eqref{eq:definition_p_v0_v1}, with $p$ defined in \eqref{eq:definition_p_v0_v1}, as
  \[
  \E\left[\left|\Pi_{k, t}^{(\mathrm{g})(q)}\right|\right]
  \leq
  \sum_{w=\lceil\ell_k\rceil}^t\sum_{\pi_1=\ell_1}^t\dots\sum_{\pi_{k-1}=\ell_{k-1}}^t p(q, \pi_1, ..., \pi_{k-1}, w)
  =:
  \sum_{w=\lceil\ell_k\rceil}^t f_{k,t}(q, w)\label{eq:exp_delta_bkv}.
  \]
  Intuitively, when $w\geq \ell_k$, $f_{k,t}(q, w)$ is an upper bound for the expected number of good paths
  from $q$ to $w$ of length $k$.
  From \cite[Section 4.1]{dereich2012typical} it follows that for $q\geq\ell_0$ there is a majorant
  of the form
  \begin{equation}
  f_{k,t}(q, w)\leq \alpha_kw^{-\gamma} + \mathds{1}_{\{w>\ell_{k-1}\}}\beta_kw^{\gamma-1},
  \label{eq:small_neighbourhoods_majorant}
  \end{equation}
  where the sequences $\alpha_k, \beta_k, \ell_k$ are defined recursively as
  \begin{align}
    \alpha_k &:=
    \begin{dcases}
      \nu_{\ref{prop:pa_g}}({\delta'} t)^{\gamma-1} & \text{if } k=1, \\
      c\left(\alpha_{k-1}\log(t/\ell_{k-1}) + \beta_{k-1}t^{2\gamma-1}\right) \phantom{i}& \text{if } k>1,
    \end{dcases}\label{eq:def_alpha_rec} \\
    \beta_k &:=
    \begin{dcases}
      \nu_{\ref{prop:pa_g}}({\delta'} t)^{-\gamma} &\text{if } k=1, \\
      c(\alpha_{k-1}\ell_{k-1}^{1-2\gamma} + \beta_{k-1}\log(t/\ell_{k-1}))\phantom{f}\hspace{2pt} & \text{if } k>1,
    \end{dcases} \label{eq:def_beta_rec}\\
    \ell_k &:=
    \begin{dcases}
      \lceil{\delta'} t\rceil & \text{if } k=0, \\
      \argmax_{x\in\mathbb{N}\backslash\{0, 1\}} \left\{\frac{1}{1-\gamma}\alpha_kx^{1-\gamma}\leq \frac{6{\delta'}}{\pi^2k^2}\right\}&\text{if }k>0,
    \end{dcases}\label{eq:def_alpha_l}
  \end{align}
  with a constant $c=c(\gamma,\nu_{\ref{prop:pa_g}})>1$ chosen in \cite[Lemma 1]{dereich2012typical}.
  Recall $\gamma\in(1/2,1)$.
  Using the above definitions and majorant in \eqref{eq:small_neighbourhoods_majorant}, we return to the bound \eqref{eq:exp_delta_bkv}, and see
  \begin{align}
    \E\left[\left|\Pi_{k, t}^{(\mathrm{g})(q)}\right|\right]
    &\leq \sum_{w=\ell_k}^t\left(\alpha_kw^{-\gamma} + \mathds{1}_{\{w>\ell_{k-1}\}}\beta_kw^{\gamma-1}\right) \nonumber\\
    &=\alpha_k\Big(\ell_k^{-\gamma}+\sum_{w=\ell_k+1}^tw^{-\gamma}\Big) + \beta_k \sum_{w={\ell_{k-1}+1}}^tw^{\gamma-1}. \label{eq:good_paths_alpha_beta}
  \end{align}
  Observe that for $a,b>0$, $\mu\in(0,1)$
  \[
    \sum_{w=a+1}^b w^{-\mu}
    \leq \int_a^b w^{-\mu}\mathrm{d}w
    \leq \frac{b^{1-\mu}}{1-\mu},
  \]
  so that we can bound \eqref{eq:good_paths_alpha_beta} from above by
  \begin{align}
    \E\left[\left|\Pi_{k, t}^{(\mathrm{g})(q)}\right|\right]
    &\leq
    \alpha_k\ell_k^{-\gamma} + \frac{\alpha_k\ell_k^{1-\gamma}}{1-\gamma}(t/\ell_k)^{1-\gamma}
    + \frac{\beta_k}{\gamma}t^\gamma
    =: T_1 + T_2 + T_3.\label{eq:delta_bkv_t123}
  \end{align}
  As a result of \eqref{eq:def_alpha_l}, we obtain for the first term, since $\ell_k\geq1$,
  \begin{equation}
    T_1=\alpha_k\ell_k^{-\gamma}\leq \frac{6\delta'}{\pi^2k^2}(1-\gamma) \ell_k^{-1}\leq \frac{3\delta'}{\pi^2k^2}(1-\gamma)\leq \delta'.\label{eq:delta_bkv_t1}
  \end{equation}
  To bound $T_2$ and $T_3$, we use a claim from \cite[Theorem 2, (19)]{dereich2012typical}
  that the sequence $t/\ell_k$ does not increase \emph{too fast},
  i.e., there exists $B^\ast(\gamma,\nu_{\ref{prop:pa_g}})$
  such that for  $t$ sufficiently large
  \begin{equation}
    t/\ell_k\leq\exp\big(BA_\gamma^{k/2}\big)\label{eq:upperbound_eta}
  \end{equation}
  for any $B\geq B^\ast(\gamma,\nu_{\ref{prop:pa_g}})$. Consider $T_2$, and substitute the bounds from \eqref{eq:def_alpha_l} and \eqref{eq:upperbound_eta}
  \begin{equation}
      \frac{\alpha_k\ell_k^{1-\gamma}}{1-\gamma}(t/\ell_k)^{1-\gamma}
      \leq \frac{6\delta'}{\pi^2k^2}\exp\left(B(1-\gamma)A_\gamma^{k/2}\right)\leq \delta'\exp\left(BA_\gamma^{k/2}\right).\label{eq:delta_bkv_t2}
  \end{equation}
  For $T_3$, we substitute \eqref{eq:def_beta_rec} for $\beta_k$ to obtain
  \begin{align}
    T_3
    =\frac{1}{\gamma}\beta_kt^\gamma
    = \frac{c}{\gamma}\alpha_{k-1}(t/\ell_{k-1})^\gamma\ell_{k-1}^{1-\gamma} + \frac{c}{\gamma}\beta_{k-1}t^\gamma\log(t/\ell_{k-1})
    =: T_{31} + T_{32}. \label{eq:delta_bkv_t3}
  \end{align}
  We use \eqref{eq:def_alpha_l} and \eqref{eq:upperbound_eta} to bound
  \begin{align}
    T_{31} = \frac{c}{\gamma}(t/\ell_{k-1})^\gamma\alpha_{k-1}\ell_{k-1}^{1-\gamma}
    \leq \frac{6c\delta'(1-\gamma)}{\gamma\pi^2(k-1)^2}\exp\left(B\gamma A_\gamma^{(k-1)/2}\right)
    \leq \frac{c\delta'}{\gamma}\exp\left(BA_\gamma^{k/2}\right). \label{eq:delta_bkv_t31}
  \end{align}
  Observe that by \eqref{eq:def_alpha_rec}, $\beta_{k-1}\leq t^{1-2\gamma}\alpha_k/c$.
  Hence, the second term in \eqref{eq:delta_bkv_t3} is bounded by
  \[
   T_{32} =\frac{c}{\gamma}\beta_{k-1}t^\gamma\log(t/\ell_{k-1})\leq \frac{1}{\gamma}t^{1-\gamma}\alpha_k\log(t/\ell_{k-1}).
  \]
  By \eqref{eq:def_alpha_l}, if $\delta'<\pi^2/6$, then $\alpha_k\leq (1/\ell_k)^{1-\gamma}$.
  After substituting  \eqref{eq:upperbound_eta} for $B$ sufficiently large
  \begin{align}
    T_{32} \leq \frac{1}{\gamma}(t/\ell_k)^{1-\gamma}\log(t/\ell_{k-1})
    \leq \frac{B}{\gamma}A_\gamma^{(k-1)/2}\exp\left(B(1-\gamma)A_\gamma^{k/2}\right).\nonumber
  \end{align}
  As the first factor grows exponentially in $k$, by increasing $B$, we obtain
  $
    T_{32}
    \leq
    \exp\big(BA_\gamma^{k/2}\big).
  $
  Combining this with \eqref{eq:delta_bkv_t123}, \eqref{eq:delta_bkv_t1}, \eqref{eq:delta_bkv_t2},  \eqref{eq:delta_bkv_t3}, and \eqref{eq:delta_bkv_t31} gives us that there exists
  a constant $B_{\ref{lemma:small_neighbourhoods}} >
  B^\ast(\gamma, \nu_{\ref{prop:pa_g}})>0$
  such that for $B\geq B_{\ref{lemma:small_neighbourhoods}}$ we obtain the desired bound \eqref{eq:upperbound_ebkv}.
\end{proof}
\end{lemma}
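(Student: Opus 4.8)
The plan is a first–moment path–counting argument, using the connectivity bound PA($\gamma_{\ref{prop:pa_g}}$) from Proposition \ref{prop:pa_g} as the only probabilistic input, in the spirit of \cite[Theorem 2]{dereich2012typical}. The starting point is that every $w\in\partial\mathcal{B}_G^{(t)}(q,k)$ is the endpoint of at least one length-$k$ path from $q$, so $|\partial\mathcal{B}_G^{(t)}(q,k)|$ is dominated by the number of such paths, whose expectation is at most $\sum p(q,\pi_1,\dots,\pi_{k-1},w)$ with $p(m,n)=\nu_{\ref{prop:pa_g}}(m\wedge n)^{-\gamma}(m\vee n)^{\gamma-1}$, $\gamma:=\gamma_{\ref{prop:pa_g}}$. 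First I would split the paths into \emph{good} ones, whose vertex labels $\pi_k$ stay above a non-increasing threshold sequence $(\ell_k)_{k\geq0}$ with $\ell_0\asymp\delta_{\ref{lemma:small_neighbourhoods}}t$, and \emph{bad} ones, and absorb the latter into a deterministic ``bad'' event $\mathcal{E}_{\mathrm{bad}}(q)$: the union of $\{q<\ell_0\}$ (probability $\le\delta'+1/t$) and ``some bad path of length $\le K^\ast_t-M_{\ref{prop:upperbound_dg}}(t)$ emanates from $q$'', the latter of probability $\le\delta'$ by the estimate already established in \cite{dereich2012typical} for exactly this choice of $(\ell_k)$; here $\delta'$ is a fixed fraction of $\delta_{\ref{lemma:small_neighbourhoods}}$. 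Setting $\mathcal{E}_{\ref{lemma:small_neighbourhoods}}^{(t)}(q):=\mathcal{E}_{\mathrm{bad}}(q)^c$ then gives \eqref{eq:small_neighbourhoods_condition}.

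The core of the proof is the conditional first–moment bound $\E[|\partial\mathcal{B}_G^{(t)}(q,k)|\mid\mathcal{E}_{\ref{lemma:small_neighbourhoods}}^{(t)}(q)]\le\exp(BA_\gamma^{k/2})$ with $A_\gamma:=\gamma/(1-\gamma)$, which is \eqref{eq:upperbound_ebkv}. Conditioning on $\mathcal{E}_{\ref{lemma:small_neighbourhoods}}^{(t)}(q)$ only discards bad paths and costs a harmless factor $1/(1-\delta_{\ref{lemma:small_neighbourhoods}})$, so it suffices to bound the unconditional expected number of good paths. For this I would invoke the recursive majorant of \cite[Section 4.1]{dereich2012typical}, $f_{k,t}(q,w)\le\alpha_kw^{-\gamma}+\mathds{1}_{\{w>\ell_{k-1}\}}\beta_kw^{\gamma-1}$ as in \eqref{eq:small_neighbourhoods_majorant}, with $\alpha_k,\beta_k,\ell_k$ given by the recursions \eqref{eq:def_alpha_rec}--\eqref{eq:def_alpha_l}. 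Summing over $w\in\{\lceil\ell_k\rceil,\dots,t\}$ and comparing the sums with integrals produces three terms $T_1+T_2+T_3$, with $T_1=\alpha_k\ell_k^{-\gamma}$, $T_2$ the contribution of the $w^{-\gamma}$ tail, and $T_3=\tfrac1\gamma\beta_kt^\gamma$ the contribution of the $w^{\gamma-1}$ tail; substituting the recursion for $\beta_k$ splits $T_3$ further into $T_{31}+T_{32}$. Each of $T_1,T_2,T_{31},T_{32}$ is then controlled using two quantitative facts taken from \cite{dereich2012typical}: the normalisation $\tfrac1{1-\gamma}\alpha_k\ell_k^{1-\gamma}\le6\delta'/(\pi^2k^2)$ built into the definition of $\ell_k$, and the ``slow growth'' bound $t/\ell_k\le\exp(BA_\gamma^{k/2})$, valid once $B$ exceeds some $B^\ast(\gamma,\nu_{\ref{prop:pa_g}})$. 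Since $\gamma>1/2$ we have $2\gamma-1>0$ and $A_\gamma>1$, and a short computation shows each term is at most a constant multiple of $\exp(BA_\gamma^{k/2})$; because $A_\gamma^{k/2}$ grows geometrically in $k$, enlarging $B$ absorbs all constants and yields \eqref{eq:upperbound_ebkv}. (For larger $\gamma\in[\gamma_{\ref{prop:pa_g}},1)$ the target $\exp(2BA_\gamma^{k/2})$ only grows, so the general statement follows from this case.)

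To finish, Markov's inequality turns \eqref{eq:upperbound_ebkv} into $\prob(|\partial\mathcal{B}_G^{(t)}(q,k)|\ge\exp(2BA_\gamma^{k/2})\mid\mathcal{E}_{\ref{lemma:small_neighbourhoods}}^{(t)}(q))\le\exp(-BA_\gamma^{k/2})$ for each $k\le K^\ast_t-M_{\ref{prop:upperbound_dg}}(t)$, and a union bound over these $k$ costs only $\sum_{k\ge1}\exp(-BA_\gamma^{k/2})$, which is summable since $A_\gamma>1$ and is at most $2\exp(-B)$ for $B$ large; this is exactly \eqref{eq:small_neighbourhoods}. The main obstacle I anticipate is bookkeeping rather than a new idea: one must verify that the recursively defined $\alpha_k,\beta_k,\ell_k$ satisfy all the needed inequalities uniformly for $k\le K^\ast_t-M_{\ref{prop:upperbound_dg}}(t)$, and in particular (re)derive or cite carefully the majorant \eqref{eq:small_neighbourhoods_majorant} and the slow–growth bound $t/\ell_k\le\exp(BA_\gamma^{k/2})$, since the precise exponent $A_\gamma^{k/2}$ is what, after $A_\gamma=\gamma/(1-\gamma)$ with $\gamma=1/(\tau-1)$ becomes $(\tau-2)^{-k/2}$, matches the scaling of $Q_t$ and of the other estimates in the paper.
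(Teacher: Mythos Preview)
Your proposal is correct and follows essentially the same approach as the paper's proof: the same good/bad path decomposition with thresholds $(\ell_k)$, the same conditional first-moment bound via the majorant $f_{k,t}(q,w)\le\alpha_kw^{-\gamma}+\mathds{1}_{\{w>\ell_{k-1}\}}\beta_kw^{\gamma-1}$ and recursions \eqref{eq:def_alpha_rec}--\eqref{eq:def_alpha_l} from \cite{dereich2012typical}, the same $T_1+T_2+T_{31}+T_{32}$ split controlled by the normalisation of $\ell_k$ and the slow-growth bound $t/\ell_k\le\exp(BA_\gamma^{k/2})$, and the same Markov plus union-bound conclusion. Your remark that the case of general $\gamma\in[\gamma_{\ref{prop:pa_g}},1)$ follows by monotonicity of $A_\gamma$ is a small clarification the paper leaves implicit.
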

We are ready to prove the lower bound of the conservative case.
\subsubsection*{Proof of Proposition \ref{prop:lowerbound_unexplosive}}
  Recall $\mathcal{E}_{\ref{prop:upperbound_dg}}^{(t)}$ and $\mathcal{E}_{\ref{lemma:small_neighbourhoods}}^{(t)}(q)$
  and their error probabilities $\delta_{\ref{prop:upperbound_dg}}, \delta_{\ref{lemma:small_neighbourhoods}}$ in \eqref{eq:upperbound_dg}, \eqref{eq:small_neighbourhoods_condition}. First we introduce some notation to work with FPA, VPA, and GVPA simultaneously. Eventually we dinstinguish
  the model GVPA vs.\ the models FPA and GVPA again.
  Recall $\varepsilon$ from the statement of Proposition \ref{prop:lowerbound_unexplosive}.
  Observe that if $t$ is sufficiently large, by Proposition \ref{prop:upperbound_dg},
  \[
  \prob\big(d_G^{(t)}(u,v) \geq 2(1-\varepsilon/2)K^\ast_t\big) \geq 1-\delta_{\ref{prop:upperbound_dg}}.
  \]
  In order to apply Lemma \ref{lemma:small_neighbourhoods}, relying on Proposition \ref{prop:pa_g}, for GVPA we set $\gamma_{\ref{prop:pa_g}}$ as the solution of
  \begin{equation}
  \frac{\log\left(\gamma_{\ref{prop:pa_g}}/(1-\gamma_{\ref{prop:pa_g}})\right)}{|\log\left(\tau-2\right)|}=\frac{1}{1-\varepsilon/2},
  \label{eq:prop_proof_lower_gamma}
  \end{equation}
  so that indeed $\gamma_{\ref{prop:pa_g}}>1/(\tau-1)$ as required for Proposition \ref{prop:pa_g}.
  For FPA and VPA, we set $\gamma_{\ref{prop:pa_g}}=1/(\tau-1)$, where $\tau$ is the power-law exponent of the considered model,
  as defined in Definition \ref{def:pa_m_delta} and Definition \ref{def:pa_gamma}.
  To avoid double notation, we define
  \begin{equation}
    K'_t :=
    \begin{dcases}
      \lfloor(1-\varepsilon/2)K_t^\ast\rfloor,&\text{for $\mathrm{GVPA}(f)$,}\\
      K^\ast_t - M_{\ref{prop:upperbound_dg}}(t),&\text{for FPA and VPA.}
    \end{dcases}
    \label{eq:lowerbound_unexplosive_kprime_t}
  \end{equation}
  Apply Proposition \ref{prop:upperbound_dg} and Lemma \ref{lemma:small_neighbourhoods} for $\delta_{\ref{prop:upperbound_dg}}=\delta_{\ref{lemma:small_neighbourhoods}}=\delta/6$
  so that w/p at least $1-\delta/6$ the neighbourhoods are disjoint and not \emph{too large} for $k<K'_t$.
  Assume that $t$ is so large that
  \begin{equation}
  \prob\big(\mathcal{E}_{\ref{prop:upperbound_dg}}^{(t)}\cap \mathcal{E}_{\ref{lemma:small_neighbourhoods}}^{(t)}(u)\cap \mathcal{E}_{\ref{lemma:small_neighbourhoods}}^{(t)}(v)\big)\geq 1-\delta/2,\label{eq:error_prob_condition_13}
  \end{equation}
  by a union bound on the complements of the events between brackets.
  Throughout the remainder of this proof, we write $A=\gamma_{\ref{prop:pa_g}}/(1-\gamma_{\ref{prop:pa_g}})$
  and condition on the above event between brackets.
  On this event, the bound in \eqref{eq:weighted_distance_lb_sum_min} holds.  We now bound the minimal weights in \eqref{eq:weighted_distance_lb_sum_min}.
  Assume $B>B_{\ref{lemma:small_neighbourhoods}}$.
  By \eqref{eq:small_neighbourhoods} in Lemma \ref{lemma:small_neighbourhoods}, for $q\in\{u,v\}$, between $\partial \mathcal{B}^{(t)}_{G}(q,k)$ and
 $\partial \mathcal{B}^{(t)}_{G}(q,k+1)$ there are at most
 \begin{align}
 |\partial \mathcal{B}^{(t)}_{G}(q,k)|\cdot|\partial \mathcal{B}^{(t)}_{G}(q,k+1)|
 &\,\leq\,
 \exp\big(2B\big(1 + \sqrt{A}\big)A^{k/2}\big)
 \,=:\,n_k \nonumber
\end{align}
 edges with cumulative error probability (over $k$) at most $2\exp(-B)=:\delta_{B}^{(1)}.$
 By ($\ast$) in \eqref{eq:minimum-iid-lower},
 \begin{align}\label{eq:upperbound_small_min}
   \prob\Bigg(
   \bigcup_{k\in[K'_t}
   \Big\{\min_{j\in[n_k]}L_{j,k}
   \leq F_L^{(-1)}\big(n_k^{-1-\xi}\big)\Big\}\Bigg)
   \leq \sum_{k\in [K'_t]} \exp\big(-2B\xi\big(1 + \sqrt{A}\big) A^{k/2}\big) \nonumber\\
   \leq 2\exp\big(-2B\xi\big(1 + \sqrt{A}\big)\big)=:\delta_{B}^{(2)},
 \end{align}
following from a union bound over $k\in[K'_t]$. Combining \eqref{eq:weighted_distance_lb_sum_min} and \eqref{eq:upperbound_small_min}
 gives for $q\in\{u,v\}$
 \begin{equation}\label{eq:lowerbound_dl_sum}
   \prob\bigg(
   d_L^{(t)}(q, \partial \mathcal{B}_{G}^{(t)}(q, K'_t))
   \leq
   \sum_{k\in[K'_t]} F_L^{(-1)}\left(\exp\big(-2B(1+\xi)\big(1 + \sqrt{A}\big)A^{k/2}\big)\right)\bigg)\leq \delta_B^{(1)}+\delta_B^{(2)},
 \end{equation}
 when $B>B_{\ref{lemma:small_neighbourhoods}}$.
 In particular, we choose $B$ so that
 $\delta_B^{(1)}$ + $\delta_B^{(2)} \leq \delta/4$.
 Recall \eqref{eq:error_prob_condition_13} and the reasoning before \eqref{eq:weighted_distance_lb_sum_min}, so that by
 the law of conditional probability and a union bound for $q\in\{u,v\}$
 on the event in \eqref{eq:lowerbound_dl_sum}, we obtain w/p at least $1-\delta$ that
 \begin{equation}
 d^{(t)}_L\left(u, v\right) \geq
 2\sum_{k\in[K'_t]} F_L^{(-1)}\left(\exp\big(-2B(1+\xi)\big(1 + \sqrt{A}\big)A^{k/2}\big)\right) =: 2S_{K'_t}.
 \label{eq:r_sb}
 \end{equation}
 It remains to show that $2S_{K'_t}$ is larger than the rhs between brackets in \eqref{eq:weighted_distance_nonexplosive}
 and \eqref{eq:weighted_distance_nonexplosive_tight} for the corresponding models.
 Similarly to the upper bound, we rewrite the sum in $S_{K'_t}$ to match the summands in
 $Q_t$ in \eqref{eq:k_ast_qt}, then we bound the sum from below by switching to integrals, apply a variable transformation, and go back to sums.
 Applying ($\ast$) in \eqref{eq:int_sum_bounds} to $S_{K'_t}$ yields
 \begin{equation}
 S_{K'_t}
 \geq \int_{1}^{K'_t}F_L^{(-1)}\left(\exp\big(-2B(1+\xi)\big(1 + \sqrt{A}\big)A^{x/2}\big)\right)\mathrm{d}x.
 \label{eq:prop_lower_before_variable_trans}
 \end{equation}
 For the models FPA and VPA, $A=1/(\tau-2)$, while for GVPA, $A=(1+\varepsilon')/(\tau-2)$, for some $\varepsilon'=\varepsilon'(\varepsilon)$.
 After the variable transformation $2B(1+\xi)(1+\sqrt{A})A^{x/2}=(\tau-2)^{-y/2}$ on the rhs of \eqref{eq:prop_lower_before_variable_trans},
 the function over which we integrate matches the function in the sum in $Q_t$ in \eqref{eq:k_ast_qt}, i.e.,
 \begin{equation}
    S_{K'_t} \geq \frac{1}{r}\int_{r+s(B)}^{rK'_t + s(B)}F_L^{(-1)}\left(\exp\big(-(\tau-2)^{-y/2}\big)\right)\mathrm{d}y,
    \label{eq:prop_lower_after_variable_trans}
 \end{equation}
 where $
   s(B):= 2\log\big(2B(1+\xi)(1+\sqrt{A})\big)/|\log\left(\tau-2\right)|
 $, and by our choice of $\gamma$ in \eqref{eq:prop_proof_lower_gamma}
\begin{align}
  r&:=
  \begin{dcases}
    \frac{\log(A)}{|\log\left(\tau-2\right)|}=\frac{1}{1-\varepsilon/2},& \text{for the model GVPA,} \\
  1,&\text{for the models $\mathrm{FPA}, \mathrm{VPA}$.}\label{eq:lower_r_variable_trans}
\end{dcases}
\end{align}
Apply $(\star)$ from \eqref{eq:int_sum_bounds} to the rhs of \eqref{eq:prop_lower_after_variable_trans} gives, abbreviating $a_k:=F_L^{(-1)}\left(\exp(-(\tau-2)^{-k/2}\right)$,
\begin{equation}
  S_{K'_t}
  \geq
  \frac{1}{r}\sum_{k=\lceil r + s(B)\rceil + 1}^{\lfloor rK'_t + s(B)\rfloor}a_k.
  \nonumber
\end{equation}
For FPA and VPA,
using $r=1$ and $K'_t$ in \eqref{eq:lowerbound_unexplosive_kprime_t}, we bound
\begin{equation}
  S_{K'_t}\geq \sum_{k=\lceil s(B)\rceil + 2}^{K^\ast_t - M_{\ref{prop:upperbound_dg}}(t) + \lfloor s(B)\rfloor}a_k=:\widetilde{Q}_t.
  \nonumber
\end{equation}
Up to a shift in the boundaries, the above summands match the summands in $Q_t$ in \eqref{eq:k_ast_qt}.
To obtain \eqref{eq:weighted_distance_nonexplosive_tight}, we should choose  $M_{\ref{prop:lowerbound_unexplosive}}$ such that
$
Q_t - M_{\ref{prop:lowerbound_unexplosive}} \leq \widetilde{Q}_t,
$
which is equivalent to
\[
M_{\ref{prop:lowerbound_unexplosive}} \geq Q_t - \widetilde{Q}_t
=
\sum_{k=1}^{K^\ast_t}a_k - \sum_{k=\lceil s(B)\rceil + 2}^{K^\ast_t - M_{\ref{prop:upperbound_dg}}(t) + \lfloor s(B)\rfloor}a_k = \sum_{k=1}^{\lceil s(B)\rceil + 1}a_k + \sum_{k=K^\ast_t + 1}^{K^\ast_t-M_{\ref{prop:upperbound_dg}}(t) + \lfloor s(B)\rfloor}a_k,
\]
where we define the second sum as 0 if $\lfloor s(B)\rfloor - M_{\ref{prop:upperbound_dg}}<1$.
The first sum on the rhs can be bounded by a constant. As the sequence $a_k$ is decreasing, we shift the summation boundaries of the second sum on the rhs and choose
\[
M_{\ref{prop:lowerbound_unexplosive}}:=\sum_{k=1}^{\lceil s(B)\rceil + 1}a_k + \sum_{k= 1}^{-M_{\ref{prop:upperbound_dg}}(t) + \lfloor s(B)\rfloor}a_k.
\]
Observe that we can bound the second sum here by a constant, since $M_{\ref{prop:upperbound_dg}}(t)= O(1)$ for FPA and VPA by Proposition \ref{prop:upperbound_dg}.
As a result, we bound the lhs \eqref{eq:r_sb} from above and obtain the result
\begin{align*}
  \prob\big(d_L^{(t)}(u, v) \geq 2(Q_t - M_{\ref{prop:lowerbound_unexplosive}}) \big)
  \geq
\prob\big(d_L^{(t)}(u, v) \geq 2\widetilde{Q}_t \big)
\geq
\prob\big(d_L^{(t)}(u, v)\geq 2 S_{K'_t}\big)
\geq
1-\delta.
\end{align*}

We turn now to GVPA.
Recalling $K'_t$ from \eqref{eq:lowerbound_unexplosive_kprime_t} and $r$ from \eqref{eq:lower_r_variable_trans}, we bound $rK'_t$ in the upper summation boundary in \eqref{eq:r_sb} from below by
\begin{align*}
\frac{1}{1-\varepsilon/2}\left\lfloor\left(1-\frac{\varepsilon}{2}\right)K^\ast_t\right\rfloor
&\geq
\frac{1}{1-\varepsilon/2}\left(\left(1-\frac{\varepsilon}{2}\right)K^\ast_t-1\right)
\geq
K^\ast_t - \frac{1}{1-\varepsilon/2}.
\end{align*}
Thus, we further bound $S_{K'_t}$ by
\begin{equation}
S_{K'_t}\geq (1-\varepsilon/2)\sum_{k=\lceil 1/(1-\varepsilon/2) + s(B)\rceil + 1}^{\left\lfloor K^\ast_t - 1/(1-\varepsilon/2) + s(B)\right\rfloor}a_k.\nonumber\end{equation}
We rewrite the boundaries of the sum and use that $\varepsilon<1$ and hence $1/(1-\varepsilon/2)\leq 2$, so
\begin{align}
  S_{K'_t}
  &\geq
  (1-\varepsilon/2)\Bigg(\sum_{k\in[K^\ast_t]} a_k + \sum_{k=K^\ast_t + 1}^{\left\lfloor K^\ast_t - 1/(1-\varepsilon/2) + s(B)\right\rfloor}a_k - \sum_{k\in[\lceil 1/(1-\varepsilon/2) + s(B)\rceil]}a_k\Bigg) \nonumber\\
  &\geq
  (1-\varepsilon/2)\Bigg(\sum_{k\in[K^\ast_t]} a_k - \sum_{k\in[2+\lceil s(B)\rceil]}a_k\Bigg),
  \label{eq:prop_lower_changed_summation_bounds}
\end{align}
As the second sum in \eqref{eq:prop_lower_changed_summation_bounds} is  a constant, it can be bounded by $\varepsilon/2$ times the first sum in \eqref{eq:prop_lower_changed_summation_bounds} when $t$ is sufficiently large.
Thus,  we have that
\[
d^{(t)}_L\left(u, v\right)
\geq
2(1-\varepsilon/2)^2\sum_{k\in[K^\ast_t]}a_k
\geq
2(1-\varepsilon)\sum_{k\in[K^\ast_t]}a_k
=
2(1-\varepsilon)Q_t
\]
w/p $1-\delta$, for any fixed $B$ and $t$ sufficiently large. This is the asserted bound in \eqref{eq:weighted_distance_nonexplosive}.
\qed

To finish the section, we prove the lower bound for the explosive class.
\begin{proof}[Proof of Proposition \ref{prop:lowerbound_explosive}]
  Recall $K^\ast_t$ from \eqref{eq:k_ast_qt} and let $M_{\ref{prop:upperbound_dg}}(t)$ be the function we obtain by applying Proposition \ref{prop:upperbound_dg}
  for $\delta_{\ref{prop:upperbound_dg}} = \delta / 2$. As a result, we obtain for any $K\leq K^\ast_t-M_{\ref{prop:upperbound_dg}}(t)$,
  \[
  \prob\bigg(d_L^{(t)}(u, v) < \sum_{q\in\{u,v\}} d_L^{(t)}(q, \mathcal{B}_G^{(t)}(q, K)) \bigg)
  \leq \frac{\delta}{2}.
  \]
  Let $a(t) := \min\{\kappa_{\delta/6}(t), K^\ast_t-M_{\ref{prop:upperbound_dg}}(t)\}$,
  where $\kappa_{\delta/4}(t)$ denotes the maximum number of generations in LWL to maintain a coupling with $\mathrm{PA}_t$
  w/p at least $1-\delta/4$, as in Proposition \ref{prop:local_weak_limit} and \ref{prop:local_weak_limit}. Hence, we obtain
  $d_L^{(t)}\big(q, \partial \mathcal{B}_G(q,a(t))\big)= \beta^{\mathrm{LWL}^{(q)}_{a(t)}}_{a(t)}$ if the coupling is successful for $q\in\{u, v\}$
  and thus
  \[
  \prob\bigg(d_L^{(t)}(u, v) < \sum_{i\in\{1,2\}} \beta_{a(t)}^{\mathrm{LWL}_{a(t)}^{(i)}} \bigg)
  \leq \delta,
  \]
  which finishes the proof.
\end{proof}

\section{Hopcount}\label{sec:hopcount}
In this section we prove Theorem \ref{thm:hopcount_nonexplosive}. It follows from an
adaptation of the proof of Theorem \ref{thm:weighted_distance_nonexplosive}.
\begin{proof}[Proof of Theorem \ref{thm:hopcount_nonexplosive}]
Since the hopcount is at least the graph distance, Proposition \ref{prop:upperbound_dg} implies that for any $\varepsilon>0$
\begin{equation}
\prob\big(d_H^{(t)}(u, v)\geq2(1-\varepsilon)K^\ast_t\big)\longrightarrow 1,\qquad\text{as }t\rightarrow\infty,
\label{eq:prop_proof_hopcount_lowerbound}
\end{equation}
for the model $\mathrm{GVPA}(f)$, and shows lower tightness for FPA and VPA.
It suffices to prove the matching upper bounds.
By rescaling the weights to $L/a$, all weights are at least one.
For any two vertices $u$ and $v$, the shortest path with the unscaled weights uses the same edges as the shortest path with the scaled weights.
Hence, $d_H^{(t)}(u,v)\leq d_{L/a}^{(t)}(u,v)$. Observe that for any $x>0$,
$
F_{L/a}^{(-1)}(x) = 1 + F_{L/a-1}^{(-1)}(x).
$
Fix a small $\delta>0$ and recall steps (i), (iii), and (iv) from the proof of Proposition \ref{prop:upper_bound}.
Analogously to the reasoning leading to \eqref{eq:prop_proof_qt_result},
for any $s_0$ there exists constants $C_{\ref{prop:graph_distance_to_high_degree}}, C_{\ref{prop:innercore_bounded}}, M$
such that for $t$ sufficiently large
\begin{equation}
  \prob\bigg(d_{L/a}^{(t)}(u, v)
  \leq
  2K^\ast_t  + 2\sum_{k=h_{\tau}(s_0)}^{h_{\tau}(s_0) + K^\ast_t}F_{L/a-1}^{(-1)}\left(\exp\left(\tau-2\right)^{-k/2}\right) + M
  \bigg)
\geq 1-\delta/2,
\nonumber
\end{equation}
where we used the upper bound in \eqref{eq:weighted_distance_in_t} from Proposition \ref{prop:weighted_distance_in_t} on the weight of the segment reaching the inner core. Thus, we did \emph{not} shift the summation bounds as in step (ii) in the proof of Proposition \ref{prop:upper_bound}.
If $I(L/a-1)<\infty$, then the above sum is finite and thus yields the result for tightness in \eqref{eq:thm_hopcount_tight}.
If the sum is infinite, we choose $s_0$ so large that all the terms
are bounded by a fixed $\varepsilon/2>0$, hence the sum is bounded by $\varepsilon K^\ast_t/2$.
As $K^\ast_t$ is increasing in $t$, for $t$ sufficiently large,
$\varepsilon K^\ast_t > M$. Thus,
\[
\prob\left(d_H^{(t)}(u,v)\leq 2(1+\varepsilon)K^\ast_t\right)\geq \prob\left(d_{L/a}^{(t)}(u, v) \leq 2(1+\varepsilon)K^\ast_t\right)\geq 1-\delta.
\]
Combining this upper bound with the lower bound \eqref{eq:prop_proof_hopcount_lowerbound} yields
the desired asymptotics in \eqref{eq:thm_hopcount_nontight}.
\end{proof}

\section{Conservative weights on the LWL}\label{sec:cons-lwl}
\subsection*{Proof of Theorem {\ref{theorem:conservative_lwl}}}
We prove \eqref{eq:lwl-tightness} in Theorem \ref{theorem:conservative_lwl} for the LWL of FPA and VPA such that the weight distributions satisfies \eqref{eq:tightness_cond}. At the end of the proof we describe briefly how to prove \eqref{eq:lwl-no-tightness}.
The proof is split in an upper bound and a lower bound. Both bounds rely on a coupling of the LWL to the finite graph for some large $t$ and follow from adaptations of the results in Section \ref{sec:upperbound} and Section \ref{sec:lowerbound}. Fix $k$ and $\delta>0$.

For the upper bound we show that there exists $M=M(\delta)$, not depending on $k$, such that
\begin{equation}
\prob\big(\beta_k^{(\text{LWL})}(\circledcirc)\leq Q^\sss{(k)} + M \big)\geq 1-\delta/2, \quad \text{ where }
Q^\sss{(k)}:= \sum_{i=1}^k F_L^{(-1)}\left(\exp\big(-(\tau-2)^{-i/2}\big)\right).
\label{eq:upperbound-lwl-cons}
\end{equation}
Recall $Q_t$ in \eqref{eq:k_ast_qt} and observe that $Q_t=Q^\sss{(K^\ast_t)}$.
Set $\delta_{\ref{prop:local_weak_limit}}=\delta_{\ref{prop:graph_distance_to_high_degree}}=\delta/12$,
$\delta_{\ref{prop:weighted_distance_in_t}}=\delta/6$, and
$\alpha=1-\delta/12$.
Let $t$ be sufficiently large
such that all the following hold.
\begin{enumerate}[label=(\roman*),leftmargin=2\parindent]
  \item By Proposition \ref{prop:local_weak_limit} the LWL and the graph neighbourhood of a typical vertex $q$ can be coupled up to generation $k$ with probability at least $1-\delta/12$. Recall $\beta_k^{(\text{LWL})}(\circledcirc)$ from Definition \ref{def:explosive_graph}. Conditionally on the event that this coupling is successful, $\beta_k^{(\text{LWL})}(\circledcirc)=d_L^{(t)}(q, \partial \mathcal{B}^{(t)}_G(q, k))$. Thus,  the upper bound in \eqref{eq:upperbound-lwl-cons} follows if we show that
 \begin{equation*}
 \prob\big(d_L^{(t)}(q, \partial \mathcal{B}^{(t)}_G(q, k)\big)\leq Q^\sss{(k)} + M \mid \{\text{coupling successful}\}\big)\geq 1-11\delta/12.
 \end{equation*}
 Since $q$ is a typical vertex, the event $\{q<\alpha t\}$ holds with probability at least $1-11\delta/12$. Hence, by a union bound, the event $\mathcal{E}^{\text{(i)}}:=\{q<\alpha t\}\cap\{\text{coupling successful}\}$ holds with probability at least $1-5\delta/6$, leaving to show that
 \begin{equation}
 \prob\big(d_L^{(t)}(q, \partial \mathcal{B}^{(t)}_G(q, k)\big)\leq Q^\sss{(k)} + M \mid \mathcal{E}^{\text{(i)}}\big)\geq 1- \delta/3.
 \label{eq:upperbound-lwl-cons-graph}
 \end{equation}
\item By Proposition \ref{prop:graph_distance_to_high_degree}
for $s_{\ref{prop:graph_distance_to_high_degree}}=s_0$ that we choose below in (iv),
there exists $C_{\ref{prop:graph_distance_to_high_degree}}=C_{\ref{prop:graph_distance_to_high_degree}}(\delta, s_0)$,
such that, for $\mathcal{E}^{(\text{ii})}_1:=\{\exists q'\in \mathcal{B}_G^{((1-\alpha)t)}(q, C_{\ref{prop:graph_distance_to_high_degree}}): D_{(1-\alpha)t}(q')\geq s_0\}$,
$\prob(\mathcal{E}^{(\text{ii})}_1 \mid \mathcal{E}^{(\text{i})})\geq 1-\delta/12.$
All edges on a possible path of length $C_{\ref{prop:graph_distance_to_high_degree}}$ from $q$ to $q'$ are equipped with i.i.d.\ copies of $L$.
Hence, there exists $M^\text{(ii)}=M^\text{(ii)}(\delta,C_{\ref{prop:graph_distance_to_high_degree}})>0$ such that
\[
\prob\bigg(\sum_{i=1}^{C_{\ref{prop:graph_distance_to_high_degree}}}L_i\leq M^\text{(ii)}\bigg)\geq 1-\delta/12.
\]
Let $\mathcal{E}^{(\text{ii})}:=\{\exists q'\in \mathcal{B}_G^{((1-\alpha)t)}(q, C_{\ref{prop:graph_distance_to_high_degree}}): d_L^{((1-\alpha)t)}(q,q')\leq M^\text{(ii)}\}\cap \mathcal{E}^{\text{(ii)}}_1$.
By a union bound we obtain that
\[\prob(\mathcal{E}^{(\text{ii})}\mid\mathcal{E}^{\text{(i)}})\geq 1- \delta/6.\]
\item With $K_t$ in \eqref{eq:kt}, the inequality $C_{\ref{prop:graph_distance_to_high_degree}} + 2K_t \geq k$ holds for all sufficiently large $t$. Then, we construct a greedy path to  the inner core as described in the proof of Proposition \ref{prop:weighted_distance_in_t}.
\item By Proposition \ref{prop:weighted_distance_in_t}
there is an $s_0=s_0(\delta, \varepsilon_{\ref{prop:weighted_distance_in_t}})>0$, such that for  $q'\in[\alpha t]$ with $D_{\alpha t}(q')\geq s_0$, the weighted distance to
the inner core in $\mathrm{PA}_t$ is not too large. The graph $\mathrm{PA}_t$ is coupled to LWL for at least $k$ generations.  Hence, if the coupling succeeds, then any path of $k-C_{\ref{prop:graph_distance_to_high_degree}}$ many edges
emanating from $q'$ is also present in the LWL, as $q'$ is at graph distance at most $C_{\ref{prop:graph_distance_to_high_degree}}$ from $q$ by (ii). Hence, we can use the first $k-C_{\ref{prop:graph_distance_to_high_degree}}$ edges of the greedy path described in Definition \ref{def:layers_connectors_greedy} and follow the proof of Proposition \ref{prop:weighted_distance_in_t} to estimate its total weight. By (iii), $k\leq 2K_t + C_{\ref{prop:graph_distance_to_high_degree}}$, so
ultimately, we obtain that for some that $M^\text{(iv)}>0$, that is not dependent on $k$, that
\begin{equation}
\prob\Big(d_L^{(t)}\big(q',\partial \mathcal{B}^{(t)}_G(q', 0\vee(k-C_{\ref{prop:graph_distance_to_high_degree}}))\big) \leq Q^\sss{(k)} +  M^\text{(iv)}\Big) \geq 1- \delta/6.
\nonumber
\end{equation}
Denote the above event between brackets by $\mathcal{E}^{(\text{iv})}$.
\end{enumerate}
On the event $\mathcal{E}^{(\text{i})}\cap \mathcal{E}^{(\text{ii})}\cap\mathcal{E}^{(\text{iv})}$, there is a path from $q$ to a vertex in $\mathcal{B}^{(t)}_G(q', k)$, whose total weight is bounded from above by $Q^\sss{(k)} +  M^\text{(ii)}+  M^\text{(iv)}$. If the coupling succeeds, this path is also present in the LWL by Proposition \ref{prop:local_weak_limit}.

To conclude the proof of \eqref{eq:upperbound-lwl-cons}, we recall that showing \eqref{eq:upperbound-lwl-cons} was reduced to showing \eqref{eq:upperbound-lwl-cons-graph}, which
holds for $M=M^\text{(ii)}+  M^\text{(iv)}$, since $\prob( \mathcal{E}^{(\text{ii})}\cap\mathcal{E}^{(\text{iv})}\mid \mathcal{E}^{(\text{i})})\geq 1-\delta/3$ by a union bound.

We proceed to the lower bound of Theorem \ref{theorem:conservative_lwl}, i.e., we show that
for some $M=M(\delta)$
\begin{equation}
\prob\big(\beta_k^{(\text{LWL})}(\circledcirc)\leq Q^\sss{(k)} - M \big)\geq 1- \delta/2.
\label{eq:lowerbound-lwl-cons}
\end{equation}
Let $\delta_{\ref{lemma:small_neighbourhoods}}=\delta/8$, and let $B$ in Lemma \ref{lemma:small_neighbourhoods} be sufficiently large.
Let $t$ be so large that
\begin{enumerate}
  \item $K^\ast_t-M_{\ref{lemma:small_neighbourhoods}}(t)\geq k$.
  \item by Proposition \ref{prop:local_weak_limit} the local weak limit can be coupled to the neighbourhood of radius $k$ of a typical vertex $q$ with probability at least $1-\delta/8$. Conditionally on the event that this coupling is successful, $\beta_k^{(\text{LWL})}(\circledcirc)=d_L^{(t)}(q, \partial \mathcal{B}^{(t)}_G(q, k))$.
\end{enumerate}
 Hence, by (2) it is for \eqref{eq:lowerbound-lwl-cons} sufficient to show that there exists $M$ such that
 \begin{equation*}
   \prob\big(d_L^{(t)}\big(q,\partial B^{(t)}_G(q,k)\big)\leq Q^\sss{(k)} - M \mid \{\text{coupling successful}\}\big)\geq 1- 3\delta/8.
 \end{equation*}
  This follows from easy modifications of the proof of Proposition \ref{prop:lowerbound_unexplosive}. We leave it to the reader to fill in the details.

  Now that \eqref{eq:upperbound-lwl-cons} and \eqref{eq:lowerbound-lwl-cons} have been established, \eqref{eq:lwl-tightness} follows for weight distributions satisfying \eqref{eq:tightness_cond}.
  For other weight distributions, \eqref{eq:lwl-no-tightness} can be proven for FPA and VPA using the same steps.
  For the model GVPA, one can prove \eqref{eq:lwl-no-tightness} using the same couplings from GVPA to VPA as in the proofs of Proposition \ref{prop:upper_bound} and Proposition \ref{prop:lowerbound_unexplosive}. We leave it to the reader to fill in the details. \qed


\appendix
\section{Weighted distance in the inner core}
\subsection*{Proof of Proposition \ref{prop:innercore_bounded}}\label{proof:innercore_bounded}
  We give a coupling proof, similar to \cite[Proposition 3.2]{dommers2010diameters}.
  We construct a path from $w_1$ to $w_2$ via a subset of the inner core,
  in which we can bound the weighted distance between two vertices whp.
  We show the latter first, after which we show that if $w_1$ or $w_2$ is not contained in this subset,
  the (weighted) distance to this subset is also small.

  By Lemma \ref{lemma:total_degree_high_degree_vertices}, there are at least $n_t=\lfloor\sqrt{t}\rfloor$ vertices in the inner core.
  Let $I$ be the set of the first $n_t$ vertices that have
  degrees at least $(\alpha t)^{1/(2(\tau-1))}\log(\alpha t)^{-1/2}$.
  We construct a graph $H_t$ on these vertices as follows. Recall the definition of an $\alpha$-connector from Definition \ref{def:layers_connectors_greedy}. Let $i,j$ be connected in $H_t$
  if there exists an $\alpha$-connector $y$. The weight on the edge $(i,j)$ is $L_{(i, y)} + L_{(j, y)}$.
  As explained in the proof of \cite[Proposition 3.2]{dommers2010diameters} for the model FPA, $H_t$ stochastically dominates a
  dense uniform  Erd\H{o}s-R\'enyi graph $G(n_t,p_t)$, where
  \[
  p_t := \frac{t^{\frac{1}{\tau-1}-1}}{2\log^2t}.
  \]
  Here, we say that a random graph $G$ dominates a random graph $H$ if there exists a coupling such that every edge in $H$ is also contained in $G$.
  Using \cite[Theorem 1.1]{dereich2009random}, one can verify that the same holds for GVPA.
  In \cite[Chapter 10.2]{bollobas2001random} the diameter of the dense ERRG is discussed, and it is shown that the diameter is bounded.
  Hence, the first assertion \eqref{eq:bounded_distance_inner_graph} follows. From now on, we assume that $F_L(x)>0$ for all $x>0$.
  Let $\Delta=\Delta(\tau)$ denote the diameter of the $G(n_t,p_t)$.
  The proof techniques in the above mentioned book chapter rely on the exploration around two vertices.
  In particular, it can be derived that the number of disjoint paths  between two vertices of length $\Delta$ tends to infinity with the size of the graph.
  Hence, there is a function $r_t$ tending to infinity with $t$, such that there are at least  $r_t$ disjoint paths between $u$ and $v$.
  The weight on the $i$-th path is distributed as $L_{\star}^{(i)} := L_1^{(i)} + ... + L_{2\Delta}^{(i)}$. As $F_L(x)>0$ for any $x>0$, the same holds for $F_{L_{\star}}$.
  Thus,
  \[
  \lim_{t\rightarrow\infty}\prob\big(\min_{i\in r_t} L_{\star}^{(i)} > \varepsilon \big)=0,
  \]
  for any $\varepsilon>0$.
  Hence, for $w'_1,w'_2 \in I$ and any $\varepsilon_{\ref{prop:innercore_bounded}},\delta_{\ref{prop:innercore_bounded}}>0$,
  \begin{equation}\label{eq:innercore-bound-1}
  \prob\left(d_L^{(t)}(w'_1,w'_2)\geq \varepsilon_{\ref{prop:innercore_bounded}}/3\right)\leq \delta_{\ref{prop:innercore_bounded}}/3.
  \end{equation}
  Assume that there is an $i\in\{1, 2\}$ such that $w_i\notin I$, and observe that we are done if we prove
  \begin{equation}\label{eq:innercore-bound-2}
  \prob\left(d_L^{(t)}(w_i, I)\geq \varepsilon_{\ref{prop:innercore_bounded}}/3\right)\leq \delta_{\ref{prop:innercore_bounded}}/3.
  \end{equation}
  Analogously to the proof of Lemma \ref{lemma:error_prob_tk},
  one can verify that whp the number of $\alpha$-connectors between $w_i$ and $I$ tends to infinity with $t$. Hence, the weighted distance becomes small, and we conclude by a union bound over \eqref{eq:innercore-bound-1} and \eqref{eq:innercore-bound-2} twice (for both $w_1$ and $w_2$) that the result \eqref{eq:bounded_distance_inner} follows for $t$ large.
  \qed

\end{document}